\newcommand{\beas}{\begin{eqnarray*}}
\newcommand{\enas}{\end{eqnarray*}}
\newcommand{\bea}{\begin{eqnarray}}
\newcommand{\ena}{\end{eqnarray}}
\newcommand{\bms}{\begin{multline*}}
\newcommand{\ems}{\end{multline*}}
\newcommand{\bels}{\begin{align*}}
\newcommand{\enls}{\end{align*}}
\newcommand{\bel}{\begin{align}}
\newcommand{\enl}{\end{align}}
\newcommand{\ignore}[1]{}
\newtheorem{theorem}{Theorem}[section]
\newtheorem{corollary}{Corollary}[section]
\newtheorem{proposition}{Proposition}[section]
\newtheorem{remark}{Remark}[section]
\newtheorem{lemma}{Lemma}[section]
\newtheorem{definition}{Definition}[section]
\newtheorem{assumption}{Assumption}[section]
\def\blfootnote{\xdef\@thefnmark{}\@footnotetext}
\newcommand{\xcolor}[1]{\textcolor{black}{#1}}
\newcommand{\expect}[1]{\mathbb{E}{\l(#1\r)}}
\def\r{\right}
\def\l{\left}
\begin{document}
\begin{frontmatter}
\title{Online Learning in Weakly Coupled Markov Decision Processes: A Convergence Time Study}
\runtitle{Online constrained MDPs}

\begin{aug}
\author{\fnms{Xiaohan} \snm{Wei}\thanksref{t1}\ead[label=e1]{xiaohanw@usc.edu}},
\author{\fnms{Hao} \snm{Yu}\thanksref{t1}\ead[label=e2]{yuhao@usc.edu}}
\and
\author{\fnms{Michael} \snm{J. Neely}\thanksref{t1}\ead[label=e3]{mikejneely@gmail.com}}
\thankstext{t1}{Department of Electrical Engineering, University of Southern California}
\runauthor{X. Wei, H. Yu, M. J. Neely}

\affiliation{University of Southern California}
\end{aug}

\maketitle

\maketitle

\begin{abstract}
We consider multiple parallel Markov decision processes (MDPs) coupled by global constraints, where the time varying objective and constraint functions can only be observed \emph{after}  the decision is made.  Special attention is given to how well the decision maker can perform in $T$ slots, starting from any state, compared to the best feasible randomized stationary policy in hindsight. We develop a new distributed online algorithm where each MDP makes its own decision each slot after observing a multiplier computed from past information. 
While the scenario is significantly more challenging than the classical online learning context, the algorithm is shown to have a tight $O(\sqrt{T})$ regret and constraint violations simultaneously.  To obtain such a bound, we combine several new ingredients including ergodicity and mixing time bound in weakly coupled MDPs, a new regret analysis for online constrained optimization, a drift analysis for queue processes, and a perturbation analysis based on Farkas' Lemma.  
\end{abstract}

\begin{keyword}
\kwd{Stochastic programming}
\kwd{Constrained programming}
\kwd{Markov decision processes}
\end{keyword}

\end{frontmatter}

\section{Introduction}\label{sec:intro}

This paper considers online constrained Markov decision processes (OCMDP) where both the objective and constraint functions can vary each time slot after the decision is made.  We assume a slotted time scenario with time slots $t \in \{0, 1, 2, \ldots\}$.  The OCMDP consists of $K$ parallel Markov decision processes with indices $k \in \{1, 2, \ldots, K\}$.  The $k$-th MDP has state space $\mathcal{S}^{(k)}$, action space $\mathcal{A}^{(k)}$, and transition probability matrix $P_a^{(k)}$ which depends on the chosen action $a \in \mathcal{A}^{(k)}$.  Specifically, $P_a^{(k)} = (P_a^{(k)}(s,s'))$ where 
\[
P_a^{(k)}(s,s')  = Pr\l(s_{t+1}^{(k)}=s'~\l|~s_t^{(k)} = s,~a_t^{(k)}=a\r.\r),
\]
where $s_t^{(k)}$ and $a_t^{(k)}$ are the state and action for system $k$ on slot $t$. 
We assume that both the state space and the action space are finite for all $k\in\{1,2,\cdots,K\}$. 
 
 After each MDP $k \in \{1, \ldots, K\}$ makes the decision at time $t$ (and assuming the current state is $s_t^{(k)} = s$ and the action is $a_t^{(k)}=a)$, the following information is revealed: 
 \begin{enumerate} 
 \item The next state $s_{t+1}^{(k)}$. 
 
 \item A penalty function $f_t^{(k)}(s,a)$ that depends on the current state $s$ and the current action $a$. 
 
 \item A collection of $m$ constraint functions $g_{1,t}^{(k)}(s,a), \ldots, g_{m,t}^{(k)}(s,a)$ that depend on $s$ and $a$. 
 \end{enumerate} 
 The functions $f_t^{(k)}$ and $g_{i,t}^{(k)}$ are all bounded mappings from $\mathcal{S}^{(k)} \times \mathcal{A}^{(k)}$ to $\mathbb{R}$ and represent different types of costs incurred by system $k$ on slot $t$ (depending on the current state and action).  
For example, in a multi-server data center, the different systems $k \in \{1, \ldots, K\}$ can represent different servers, the cost function for a particular server $k$ might represent energy or monetary expenditure for that server, and the constraint costs for server $k$ 
can represent negative rewards such as service rates or qualities.  Coupling between the server systems comes from using all of them to collectively support a common stream of arriving jobs. 

 A key aspect of this general problem is that the functions $f_t^{(k)}$ and $g_{i,t}^{(k)}$ are unknown until after the slot $t$ decision is made. Thus, the precise costs incurred by each system are only known at the end of the slot.  
 For a fixed time horizon of $T$ slots, the overall penalty and constraint accumulation resulting from a policy $\mathscr{P}$ is: 
\begin{equation}\label{main-regret}
F_T(d_0,\mathscr{P}) := \expect{\left.\sum_{t=1}^T\sum_{k=1}^Kf_t^{(k)}\l(a_t^{(k)},s_t^{(k)}\r)\right|~d_0,\mathscr{P}},
\end{equation}
and
\begin{equation*}
G_{i,T}(d_0,\mathscr{P}) := \expect{\left.\sum_{t=1}^T\sum_{k=1}^Kg_{i,t}^{(k)}\l(a_t^{(k)},s_t^{(k)}\r)\right|~d_0,\mathscr{P}}, 
\end{equation*}
where $d_0$ represents a given distribution on the initial joint state vector $(s_0^{(1)}, \cdots, s_0^{(K)})$. Note that $(a_t^{(k)}, s_t^{(k)})$ denotes the state-action pair of the $k$th MDP, which is a pair of random variables determined by $d_0$ and $\mathscr{P}$. Define a constraint set 
\begin{equation}\label{main-constraint}
\mathcal{G}:= \{(\mathscr{P},d_0):~G_{i,T}(d_0,\mathscr{P})\leq 0,~i=1,2,\cdots,m\}.
\end{equation}
Define the regret of a policy $\mathscr{P}$ with respect to a particular joint randomized stationary policy $\Pi$ along with an arbitrary starting state distribution $d_0$ as: 
\[
F_T(d_0,\mathscr{P}) - F_T(d_0,\Pi),
\]
The goal of OCMDP is to choose a policy $\mathscr{P}$  so that both the regret and constraint violations grow sublinearly with respect to $T$, where regret is measured against all feasible joint randomized stationary policies $\Pi$.

\subsection{A motivating example}
As an example, consider a data center with a central controller and $K$ servers (see Fig. \ref{fig:single-system}).  Jobs arrive randomly and are stored in a queue to await service.  The system operates in slotted time $t \in \{0, 1, 2,\ldots\}$ and each server $k \in \{1, \ldots, K\}$ is modeled as a 3-state MDP with states \emph{active}, \emph{idle}, and \emph{setup}: 

\begin{itemize}
\item Active: In this state the  server is available to serve jobs.  Server $k$ incurs a time varying electricity cost on every active slot, regardless of whether or not there are jobs to serve. It has a control option to stay active or transition to the idle state. 

\item Idle: In this state no jobs can be served.  This state has multiple sleep modes as control options, each with different per-slot costs and setup times required for transitioning from idle to active. 

\item Setup: This is a transition state between idle and active.  No jobs can be served and there are no control options. The setup costs and durations are (possibly constant) random variables depending on the preceding chosen sleep mode.   
 \end{itemize}
 The goal is to minimize the overall electricity cost subject to stabilizing the job queue.  In a typical data center scenario, the performance of each server on a given slot is governed by the current electricity price and the service rate under each decision, both of which can be time varying and unknown to the server beforehand.  
This problem is challenging because: 
\begin{itemize}
\item  If one server is currently in a setup state, it has zero service rate and cannot make another decision
until it reaches the active state (which typically takes more than one slot), whereas other active servers can make decisions
during this time. Thus, servers are acting asynchronously.

\item The electricity price exhibits variation across time, location, and utility providers. Its behavior is irregular and can be difficult to predict.  As an example, Fig. \ref{fig:price}  plots the average per 5 minute spot market price (between 05/01/2017 and 05/10/2017) at New York zone CENTRL (\cite{nyiso}).  Servers in different locations can have different price offerings, and this piles up the uncertainty across the whole system.
\end{itemize}

 Despite these difficulties, this problem fits into the formulation of this paper: The electricity price acts as the global penalty function, and stability of the queue can be treated as a global constraint that the expected total number of arrivals is less than the expected service rate.

\begin{figure}[htbp]
   \centering
   \includegraphics[height=1.6in]{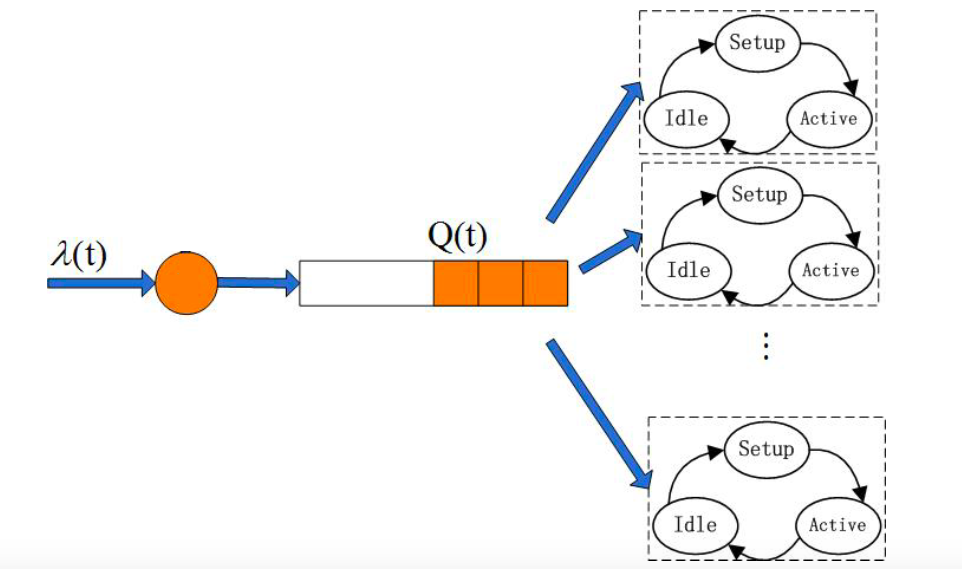} 
   \caption{Illustration of a data center server scheduling model.}
   \label{fig:single-system}
\end{figure}

\begin{figure}[htbp]
   \centering
   \includegraphics[height=2in]{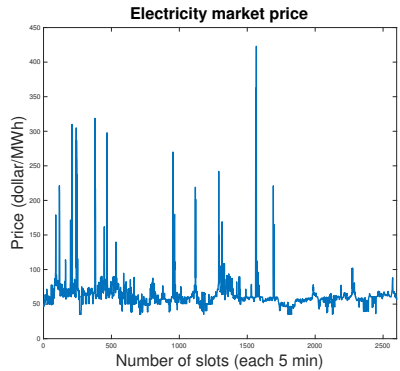} 
   \caption{A typical trace of electricity market price.}
   \label{fig:price}
\end{figure}

A review on data server provision can be found in \cite{gandhi2013dynamic}
and references therein.  Prior data center analysis often assumes the system has up-to-date information on service rates and electricity costs (see, for example, \cite{weidatacenter17},\cite{gandhi2013exact}).   On the other hand, work that treats outdated information (such as \cite{lin2013dynamic}, \cite{urgaonkar2011optimal}) generally does not consider the potential Markov structure of the problem. The current paper treats the Markov structure of the problem and allows rate and price information to be unknown and outdated.

\subsection{Related work}

\begin{itemize}
\item \textbf{Online convex optimization (OCO)}: This concerns multi-round cost minimization with arbitrarily-varying convex loss functions.  Specifically, on each slot $t$ the decision maker chooses decisions $x(t)$ within a convex set $\mathcal{X}$ (before observing the loss function $f^t(x)$) in order to minimize the total \emph{regret} compared to the best fixed decision in hindsight, expressed as: 
\begin{align*}
\text{regret}(T) = \sum_{t=1}^{T} f^t(\mathbf{x}(t))  - \min_{\mathbf{x}\in \mathcal{X}} \sum_{t=1}^T f^t(\mathbf{x}).
\end{align*}
See \cite{hazan2016introduction} for an introduction to OCO.  Zinkevich introduced OCO in \cite{zinkevich2003online}
and shows that an online projection gradient descent (OGD) algorithm achieves $O(\sqrt{T})$ regret. This $O(\sqrt{T})$ regret is proven to be the best in \cite{hazan07ML}, although improved performance is possible if all convex loss functions are \emph{strongly} convex.  The OGD decision requires to compute a projection of a vector onto a set $\mathcal{X}$.  For complicated sets $\mathcal{X}$ with functional equality constraints, e.g., $\mathcal{X} = \{x\in \mathcal{X}_0: g_k (\mathbf{x})\leq 0, k\in\{1,2,\ldots,m\}\}$, the projection can have high complexity. To circumvent the projection, work in \cite{mahdavi2012trading,jenatton2016adaptive,yu2016low,chen2017online}
proposes alternative algorithms with simpler per-slot complexity and that satisfy the inequality constraints in the long term (rather than on every slot).  Recently, new primal-dual type algorithms with low complexity are proposed in \cite{neely2017online,hao2017onlinestochastic} to solve more challenging OCO with time-varying functional inequality constraints.

 \item \textbf{Online Markov decision processes}: 
 This extends OCO to allow systems with a more complex Markov structure. This is similar to the setup of the current paper of minimizing the expression  \eqref{main-regret}, but does not have the constraint set \eqref{main-constraint}.  Unlike traditional OCO, the current penalty depends not only on the current action and the current (unknown) penalty function, but on the current system state (which depends on the history of previous actions).  Further, the number of policies can grow exponentially with the sizes of the state and action spaces, so that solutions can be computationally intensive. The work \cite{even2009online} develops an algorithm in this context with 
  $\mathcal{O}(\sqrt{T})$ regret. Extended algorithms and regularization methods are developed 
  in \cite{yu2009markov}\cite{guan2014online}\cite{dick2014online} to reduce complexity and improve dependencies on the number of states and actions.  Online MDP under bandit feedback (where the decision maker can only observe the penalty corresponding to the chosen action) is considered in \cite{yu2009markov}\cite{neu2010online}.

\item \textbf{Constrained MDPs}: 
This aims to solve classical MDP problems with \emph{known} cost functions but subject to additional constraints on the budget or resources. Linear programming methods for MDPs are found, for example, in  \cite{altman1999constrained}, and algorithms beyond LP are found in \cite{neely2011online} \cite{caramanis2014efficient}.  Formulations closest to our setup appear in recent work on weakly coupled MDPs in \cite{boutilier2016budget}\cite{wei2016theory} that have known cost and resource functions.

\item \textbf{Reinforcement Learning (RL)}: This concerns MDPs with some unknown parameters (such as unknown functions and transition probabilities).  Typically, RL makes stronger assumptions than the online setting, such as an environment that is unknown but fixed, whereas the unknown environment in the online context can change over time. Methods for RL are developed in \cite{bertsekas1995dynamic}\cite{sutton1998reinforcement}\cite{lattimore2013sample}\cite{chen2016stochastic}.
\end{itemize}

\subsection{Our contributions}

The current paper proposes a new framework for online MDPs with time varying constraints.  Further, it considers multiple MDP systems that are weakly coupled.  While the scenario is significantly more challenging than the original Zinkevich OGD context as well as other classical online learning scenarios, the algorithm is shown to achieve  tight $O(\sqrt{T})$ regret in both the objective function and the constraints, which ties the optimal $O(\sqrt{T})$ regret for those simpler unconstrained OCO problems. Along the way, we show the bound grows polynomially with the number of MDPs and linearly with respect to the number of states and actions in each MDP (Theorem \ref{thm:final-regret}). 

\section{Preliminaries}\label{sec:assumption}
\subsection{Basic Definitions}
Throughout this paper, given an MDP with state space $\mathcal{S}$ and action space $\mathcal{A}$,
 a \textit{policy} $\mathscr{P}$ defines a (possibly probabilistic) 
 method of choosing actions $a\in\mathcal{A}$ at state $s\in\mathcal{S}$ based on the past information. 
We start with some basic definitions of important classes of policies: 
\begin{definition}
For an MDP, a \textbf{randomized stationary policy} $\pi$ defines an algorithm which, whenever the system is in state $s \in \mathcal{S}$, chooses an action $a \in \mathcal{A}$ according to a fixed conditional probability function $\pi(a|s)$, defined for all $a\in\mathcal{A}$ and $s\in\mathcal{S}$. 
\end{definition}

\begin{definition}\label{def:pp}
For an MDP, a \textbf{pure policy} $\pi$ is a randomized stationary policy with all probabilities equal to either 0 or 1.  That is, 
a pure policy is defined by a  deterministic mapping between states $s \in \mathcal{S}$ and actions $a \in \mathcal{A}$. 
Whenever
the system is in a state $s \in \mathcal{S}$, it always chooses a particular action $a_s \in\mathcal{A}$ (with probability 1).
\end{definition}

Note that if an MDP has a finite state and action space, the set of all pure policies is also finite. 
Consider the MDP associated with a particular system $k \in \{1, \ldots, K\}$. For any randomized stationary policy $\pi$, it holds that $\sum_{a\in\mathcal{A}^{(k)}}\pi(a|s) = 1$ for all  $s\in\mathcal{S}^{(k)}$. Define the transition probability matrix $\mathbf{P}_{\pi}^{(k)}$ under policy $\pi$ to have components as follows:
\begin{equation}\label{transition-matrix}
P_{\pi}^{(k)}(s,s') = \sum_{a\in\mathcal{A}^{(k)}}\pi(a|s)P_{a}^{(k)}(s,s'),~~s,s'\in\mathcal{S}^{(k)}.
\end{equation}
It is easy to verify that $\mathbf{P}_{\pi}^{(k)}$ is indeed a \emph{stochastic matrix}, that is, it 
has rows with nonnegative components that 
sum to 1. Let  $d_0^{(k)}\in[0,1]^{|\mathcal{S}^{(k)}|}$ be an (arbitrary) initial distribution for the $k$-th MDP.\footnote{For any set $\mathcal{S}$, 
we use $|\mathcal{S}|$ to denote the cardinality of the set.} 
Define the state distribution 
at time $t$ under $\pi$ as $d_{\pi,t}^{(k)}$. By the Markov property of the system, 
we have $d_{\pi,t}^{(k)} = d_{0}^{(k)}\l(\mathbf{P}_{\pi}^{(k)}\r)^t$.    A transition probability matrix $\mathbf{P}_{\pi}^{(k)}$ is 
\emph{ergodic} if it gives rise to a Markov chain that is irreducible and aperiodic.  Since the state space is finite, an ergodic 
matrix $\mathbf{P}_{\pi}^{(k)}$ has a unique stationary distribution denoted $d_{\pi}^{(k)}$, so that  
$d_{\pi}^{(k)}$ is the unique probability vector solving $d=d \mathbf{P}_{\pi}^{(k)}$.

\begin{assumption}[Unichain model]\label{assumption-1}
There exists a universal integer $\widehat{r}\geq1$ such that for any integer $r\geq \widehat{r}$ and every $k \in \{1, \ldots, K\}$, 
 we have the product $\mathbf{P}_{\pi_1}^{(k)}\mathbf{P}_{\pi_2}^{(k)}\cdots \mathbf{P}_{\pi_r}^{(k)}$ is \xcolor{a transition matrix with strictly positive entries} for any sequence of pure policies 
$\pi_1,\pi_2,\cdots,\pi_r$ associated with the $k$th MDP. 
\end{assumption}

\begin{remark}
Assumption \ref{assumption-1}  implies that each MDP $k\in\{1, \ldots, K\}$  is ergodic under any pure policy.  This follows
by taking $\pi_1,\pi_2,\cdots,\pi_r$ all the same in Assumption \ref{assumption-1}. \xcolor{Since the transition matrix of any randomized stationary policy can be formed as a convex combination of those of pure policies, any randomized stationary policy results in an ergodic MDP  for which there is a unique stationary distribution.} Assumption \ref{assumption-1} is easy to check via the following simple sufficient condition.

\end{remark}

\begin{proposition}
Assumption \ref{assumption-1} holds if, for every $k \in \{1, \ldots, K\}$,  
there is a fixed ergodic matrix $\mathbf{P}^{(k)}$ (i.e., a transition probability matrix that defines an irreducible and aperiodic
Markov chain) such that for any pure policy $\pi$ on MDP $k$ we have the decomposition 
$$\mathbf{P}_{\pi}^{(k)} = \delta_{\pi}\mathbf{P}^{(k)} + (1-\delta_{\pi})\mathbf{Q}_{\pi}^{(k)},$$
where $\delta_{\pi}\in(0,1]$ depends on the pure policy $\pi$ and $\mathbf{Q}_{\pi}^{(k)}$ is a stochastic matrix depending on $\pi$.
\end{proposition}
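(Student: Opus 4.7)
The plan is to expand the $r$-fold product using the hypothesized convex decomposition and exhibit one term in the expansion that is already entry-wise strictly positive, while showing that all remaining terms are entry-wise nonnegative.

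First I would collect the finiteness facts. Since $\mathcal{S}^{(k)}$ and $\mathcal{A}^{(k)}$ are finite, the set of pure policies for MDP $k$ is finite, so $\delta_*^{(k)} := \min_{\pi} \delta_{\pi}$ is attained and strictly positive. Next, because $\mathbf{P}^{(k)}$ is ergodic (irreducible and aperiodic on a finite state space), a standard Markov-chain argument gives an integer $m_k \geq 1$ such that $(\mathbf{P}^{(k)})^{m_k}$ has all strictly positive entries. Irreducibility also forces every column of $\mathbf{P}^{(k)}$ to contain at least one positive entry (otherwise the corresponding state would be unreachable), so an easy induction on $r$ shows that $(\mathbf{P}^{(k)})^{r}$ is entry-wise strictly positive for every $r \geq m_k$. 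Set $\widehat{r} := \max_k m_k$.

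Now fix any $r \geq \widehat{r}$, any $k$, and any sequence $\pi_1,\ldots,\pi_r$ of pure policies for MDP $k$. Substituting $\mathbf{P}_{\pi_i}^{(k)} = \delta_{\pi_i}\mathbf{P}^{(k)} + (1-\delta_{\pi_i})\mathbf{Q}_{\pi_i}^{(k)}$ and distributing gives
$$\prod_{i=1}^{r} \mathbf{P}_{\pi_i}^{(k)} \;=\; \Bigl(\prod_{i=1}^{r}\delta_{\pi_i}\Bigr)\bigl(\mathbf{P}^{(k)}\bigr)^{r} \;+\; R,$$
where $R$ is the sum of the remaining $2^{r}-1$ terms, each being a nonnegative scalar multiplied by some ordered product of stochastic matrices drawn from $\{\mathbf{P}^{(k)},\mathbf{Q}_{\pi_i}^{(k)}\}$. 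Since products of stochastic matrices are stochastic and in particular entry-wise nonnegative, $R \geq 0$ entry-wise. The isolated term has scalar coefficient at least $(\delta_*^{(k)})^{r}>0$ multiplying the strictly positive matrix $(\mathbf{P}^{(k)})^{r}$, so it is itself strictly positive. Adding the nonnegative $R$ preserves strict positivity, which is exactly what Assumption \ref{assumption-1} requires.

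The argument is short, so I do not anticipate a real obstacle. The one point I would be most careful about is the propagation of strict positivity from $(\mathbf{P}^{(k)})^{m_k}$ to all larger powers: this uses irreducibility (each column has a positive entry) rather than just aperiodicity, and is the reason we are allowed to pick a single cutoff $\widehat{r}$ that works uniformly for every $r \geq \widehat{r}$ and every sequence of pure policies.
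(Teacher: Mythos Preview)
Your proposal is correct and matches the paper's approach: take a uniform lower bound $\delta_{\min}$ on the $\delta_\pi$ using finiteness of pure policies, pick $r^{(k)}$ so that $(\mathbf{P}^{(k)})^r$ is strictly positive for all $r\ge r^{(k)}$, and expand the product to isolate the strictly positive term $\bigl(\prod_i \delta_{\pi_i}\bigr)(\mathbf{P}^{(k)})^r$ with a nonnegative remainder. The paper compresses this last step into the single entry-wise inequality $\mathbf{P}_{\pi_1}^{(k)}\cdots\mathbf{P}_{\pi_r}^{(k)}\ge \delta_{\min}^{\,r}(\mathbf{P}^{(k)})^r>0$, but the content is identical.
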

\begin{proof}
Fix $k \in \{1, \ldots, K\}$ and assume every pure policy on MDP $k$ has the above decomposition. 
Since there are only finitely many pure policies, there exists a lower bound $\delta_{\min}>0$ such that $\delta_{\pi}\geq\delta_{\min}$ for every pure policy $\pi$. 
Since $\mathbf{P}^{(k)}$ is an ergodic matrix, there exists an integer $r^{(k)}>0$ large enough such that $(\mathbf{P}^{(k)})^r$ has strictly positive components for all $r \geq r^{(k)}$.  Fix $r \geq r^{(k)}$ and 
let $\pi_1, \ldots, \pi_r$ be any sequence of $r$ pure policies on MDP $k$. Then
\[\mathbf{P}_{\pi_1}^{(k)}\cdots\mathbf{P}_{\pi_r}^{(k)}\geq \delta_{\min}\l(\mathbf{P}^{(k)}\r)^r > 0\]
The universal integer $\hat{r}$ can be taken as the maximum integer $r^{(k)}$ over all $k \in \{1, \ldots, K\}$. 
\end{proof}

\begin{definition}\label{def:jrsp}
A \textbf{joint randomized stationary policy} $\Pi$ on $K$ parallel MDPs defines an algorithm which chooses a joint action $\mathbf{a}:=\l(a^{(1)},~a^{(2)},~\cdots,~a^{(K)}\r)
\in\mathcal{A}^{(1)}\times\mathcal{A}^{(2)}\cdots\times\mathcal{A}^{(K)}$ given the joint state $\mathbf{s}:=\l(s^{(1)},~s^{(2)},~,\cdots,s^{(K)}\r)
\in\mathcal{S}^{(1)}\times\mathcal{S}^{(2)}\cdots\times\mathcal{S}^{(K)}$ according to a fixed conditional probability 
$\Pi\l(\mathbf{a} \l| \mathbf{s} \r.\r)$.
\end{definition}

The following special class of \emph{separable}  policies can be implemented separately over each of the $K$ MDPs and plays a role in both algorithm design and performance analysis.

\begin{definition}\label{def:rsp}
A joint randomized stationary policy $\pi$ is \textbf{separable} if the conditional probabilities 
 $\pi:=\l(\pi^{(1)},~\pi^{(2)},~\cdots,~\pi^{(K)}\r)$ decompose as a product 
 $$\pi\l(\mathbf{a} \l| \mathbf{s} \r.\r) = \prod_{k=1}^K \pi^{(k)}\l(a^{(k)}|s^{(k)}\r)$$
 for all $\mathbf{a} \in \mathcal{A}^{(1)}\times \cdots\times\mathcal{A}^{(K)}$, $\mathbf{s} \in \mathcal{S}^{(1)}\cdots\times\mathcal{S}^{(K)}$.
 \end{definition}

\subsection{Technical assumptions}

The functions $f_{t}^{(k)}$ and $g_{i,t}^{(k)}$ are determined by random processes defined over $t= 0,1,2,\cdots$. Specifically, 
let $\Omega$ be a finite dimensional vector space. Let $\{\omega_t\}_{t=0}^{\infty}$ and $\{\mu_t\}_{t=0}^{\infty}$ be two sequences of random vectors in $\Omega$. Then for all $a\in\mathcal{A}^{(k)}$, $s\in\mathcal{S}^{(k)}$, $i\in\{1,2,\cdots,m\}$ we have
 \begin{align*}
 &g_{i,t}^{(k)}(a,s) = \hat{g}_i^{(k)}\l(a,s,\omega_t\r),\\
 &f_{t}^{(k)}(a,s) =  \hat{f}^{(k)}\l(a,s,\mu_t\r)
 \end{align*}
 where $\hat{g}_i^{(k)}$ and $\hat{f}^{(k)}$ formally define the time-varying functions in terms of the random processes $\omega_t$ and $\mu_t$. 
 It is assumed that the processes $\{\omega_t\}_{t=0}^{\infty}$ and $\{\mu_t\}_{t=0}^{\infty}$ are generated at the start of slot $0$ (before any control actions are taken),
 and 
revealed gradually over time, 
 so that functions $g_{i,t}^{(k)}$ and $f_t^{(k)}$ are only revealed at the end of slot $t$.
 \begin{remark}
 The functions generated in this way are also called \textit{oblivious functions}. Such an assumption is commonly adopted in previous unconstrained online MDP works (e.g. \cite{even2009online}, \cite{yu2009markov} and \cite{dick2014online}). Further, it is also shown in \cite{yu2009markov} that without this assumption, one can choose a sequence of objective functions against the decision maker in a specifically designed MDP scenario so that one never achieves the sublinear regret. 
 \end{remark}

 The functions are also assumed to be bounded by a universal constant $\Psi$, so that: 
 \begin{equation}\label{function-bounds}
  |\hat{g}_i^{(k)}(a,s,\omega)| \leq \Psi, |\hat{f}^{(k)}(a,s,\mu)| \leq \Psi \quad, \forall k \in \{1, \ldots, K\},  \forall a \in \mathcal{A}^{(k)}  , s \in \mathcal{S}^{(k)},  \forall \omega, \mu \in \Omega.  
 \end{equation}
 It is assumed that $\{\omega_t\}_{t=0}^{\infty}$ is independent, identically distributed (i.i.d.) and independent of $\{\mu_t\}_{t=0}^{\infty}$. Hence, 
 the constraint functions can be arbitrarily correlated on the same slot, but appear i.i.d. over different slots. 
On the other hand, no specific model is imposed on $\{\mu_t\}_{t=0}^{\infty}$. Thus,  
the functions $f_{t}^{(k)}$ can be arbitrarily time varying. 
Let 
$\mathcal{H}_t$ be the system information up to time $t$, then, for any $t\in\{0,1,2,\cdots\}$, $\mathcal{H}_t$ contains
state and action information up to time $t$, i.e.
$\mathbf{s}_0,\cdots,\mathbf{s}_t$, $\mathbf{a}_0,\cdots,\mathbf{a}_t$, and $\{\omega_t\}_{t=0}^{\infty}$ and $\{\mu_t\}_{t=0}^{\infty}$.
Throughout this paper, we make the following assumptions.

\begin{assumption}[Independent transition]\label{assumption:indep-trans}
For each MDP, given the state $s^{(k)}_t\in\mathcal{S}^{(k)}$ and action $a^{(k)}_t\in\mathcal{A}^{(k)}$, the next state $s^{(k)}_{t+1}$ is independent of all other past information up to time $t$ as well as the state transition $s_{t+1}^{(j)},~\forall j\neq k$, i.e., for all
$s \in \mathcal{S}^{(k)}$ it holds that 
$$Pr\l( s_{t+1}^{(k)}=s | \mathcal{H}_t, s_{t+1}^{(j)},~\forall j\neq k \r)=Pr\l( s_{t+1}^{(k)}=s | s_t^{(k)},a_t^{(k)}\r)$$
where 
$\mathcal{H}_t$ contains all past information up to time $t$.
\end{assumption}

Intuitively, this assumption means that all MDPs are running independently in the joint probability space and thus the only coupling among them comes from the constraints, which reflects the notion of \textit{weakly coupled MDPs} in our title. Furthermore, by definition of $\mathcal{H}_t$, given $s_t^{(k)},a_t^{(k)}$, the next transition $s_{t+1}^{(k)}$ is also independent of function paths $\{\omega_t\}_{t=0}^{\infty}$ and $\{\mu_t\}_{t=0}^{\infty}$. 

The following assumption states the constraint set is strictly feasible.

\begin{assumption}[Slater's condition]\label{assumption:slater}
There exists a real value $\eta>0$ and a fixed separable randomized stationary policy $\widetilde{\pi}$ such that
\[
\mathbb{E}\left[\sum_{k=1}^Kg_{i,t}^{(k)}\l(a^{(k)}_t,s^{(k)}_t\r)\Big|~ d_{\widetilde{\pi}},\widetilde{\pi}\right]\leq-\eta,~\forall i\in\{1,2,\cdots,m\},
\]
\xcolor{
where the initial state is $d_{\widetilde{\pi}}$ and is the unique stationary distribution of policy $\widetilde{\pi}$, and the expectation is taken with respect to the random initial state and the stochastic function $g_{i,t}^{(k)}(a,s)$ (i.e., $w_t$).  }
\end{assumption}

Slater's condition is a common assumption in convergence time analysis of constrained convex optimization (e.g. \cite{nedic2009approximate}, \cite{bertsekas2009convex}). 
Note that this assumption readily implies the constraint set $\mathcal{G}$ can be achieved by the above randomized stationary policy. Specifically, take $d_0^{(k)}=d_{\widetilde\pi^{(k)}}$ and $\mathscr{P}=\widetilde{\pi}$, then, we have
\[G_{i,T}(d_0,{\tilde\pi}) = \sum_{t=0}^{T-1}\mathbb{E}\left[\sum_{k=1}^Kg_{i,t}^{(k)}\l(a^{(k)}_t,s^{(k)}_t\r)\Big|~ d_{\widetilde{\pi}},\widetilde{\pi}\right]\leq -\eta T<0.
\]

\subsection{The state-action polyhedron}\label{sec:sapoly}
In this section, we recall the well-known linear program formulation of an MDP (see, for example, \cite{altman1999constrained} and \cite{fox1966markov}).
Consider an MDP with a state space $\mathcal{S}$ and an action space $\mathcal{A}$.
Let $\Delta\subseteq \mathbb{R}^{|\mathcal S| |\mathcal{A}|}$ be a probability simplex, i.e.
\[\Delta=\l\{\theta\in\mathbb{R}^{|\mathcal S| |\mathcal{A}|}:~\sum_{(s,a)\in\mathcal{S}\times\mathcal{A}}\theta(s,a)=1,~\theta(s,a)\geq0\r\}.\]
Given a randomized stationary policy $\pi$ with stationary state distribution $d_\pi$, the MDP is a Markov chain with transition matrix $\mathbf{P}_\pi$ given by \eqref{transition-matrix}. Thus, it must satisfy the following balance equation:
\[
\sum_{s\in\mathcal{S}}d_\pi(s)P_\pi(s,s') =  d_\pi(s'),~\forall s'\in\mathcal{S}.
\]
Defining $\theta(a,s) = \pi(a|s)d_\pi(s)$ and substituting the definition of transition probability \eqref{transition-matrix} into the above equation gives
\[
\sum_{s\in\mathcal{S}}\sum_{a\in\mathcal{A}}\theta(s,a)P_a(s,s') = \sum_{a\in\mathcal{A}}\theta(s',a),
~~\forall s'\in\mathcal{S}.
\]
The variable $\theta(a,s)$ is often interpreted as a stationary probability of being at state $s\in\mathcal{S}$ and taking action $a\in\mathcal{A}$ under some randomized stationary policy. 
The state action polyhedron $\Theta$ is then defined as 
\[
\Theta := \l\{\theta\in\Delta:~\sum_{s\in\mathcal{S}}\sum_{a\in\mathcal{A}}\theta(s,a)P_a(s,s') = \sum_{a\in\mathcal{A}}\theta(s',a),
~~\forall s'\in\mathcal{S}  \r\}.
\]
Given any $\theta\in\Theta$, one can recover a randomized stationary policy $\pi$ at any state $s\in\mathcal{S}$ as 
\begin{equation}\label{solution-to-LP}
\pi(a|s) =
\begin{cases}
 \frac{\theta(a,s)}{\sum_{a\in\mathcal{A}}\theta(a,s)},~~&\textrm{if}~\sum_{a\in\mathcal{A}}\theta(a,s)\neq0,\\
 0,~~&\textrm{otherwise}.
\end{cases}
\end{equation}

Given any fixed penalty function $f(a,s)$, the best policy minimizing the penalty (without constraint) is a randomized stationary policy given by the solution to the following linear program (LP):
\begin{align}\label{LP1}
\min~~\langle\mathbf{f},\theta\rangle,~~s.t.~~\theta\in\Theta.
\end{align}
where $\mathbf{f}:=[f(a,s)]_{a\in\mathcal{A},~s\in\mathcal{S}}$. Note that for any policy $\pi$ given by the state-action pair $\theta$ according to \eqref{solution-to-LP},
\begin{align*}
\l\langle \mathbf{f},\theta \r\rangle = \mathbb{E}_{s\sim d_\pi, a\sim \pi(\cdot | s)}\l[f(a,s)\r],
\end{align*}
Thus, $\l\langle \mathbf{f},\theta \r\rangle$ is often referred to as the stationary state penalty of policy $\pi$.

It can also be shown that any state-action pair in the set $\Theta$ can be achieved by a convex combination of state-action vectors of pure policies, and thus all corner points of the polyhedron $\Theta$ are from pure policies. As a consequence, the best randomized stationary policy solving \eqref{LP1} is always a pure policy. 

\subsection{Preliminary results on MDPs}\label{sec:prelim-thm}
In this section, we give preliminary results regarding the properties of our weakly coupled MDPs under randomized stationary policies. The proofs can be found in the appendix. We start with a lemma on the uniform mixing of MDPs.

\begin{lemma}\label{lemma:mixing1}
\xcolor{Suppose Assumption \ref{assumption-1} and \ref{assumption:indep-trans} hold. }
There exists a positive integer $r$ and a constant $\tau\geq1$ such that for any two state distributions $d_1$ and $d_2$,
\[
\sup_{\pi_1^{(k)},\cdots,\pi_r^{(k)}}\l\|  \l(d_1^{(k)}-d_2^{(k)}\r)\mathbf{P}_{\pi_1^{(k)}}^{(k)}\mathbf{P}_{\pi_2^{(k)}}^{(k)}\cdots \mathbf{P}_{\pi_r^{(k)}}^{(k)}  \r\|_1\leq 
e^{-1/\tau}\l\| d_1^{(k)}-d_2^{(k)}\r\|_1,~\forall k\in\{1,2,\cdots,K\}
\]
where the supremum is taken with respect to \textbf{any} sequence of $r$ randomized stationary policies $\l\{\pi_1^{(k)},\cdots,\pi_r^{(k)}\r\}$.
\end{lemma}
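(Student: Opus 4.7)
The plan is to reduce the randomized case to the pure-policy case using Assumption \ref{assumption-1}, obtain a uniform positive lower bound on the entries of $\hat{r}$-fold products, and then invoke the standard Dobrushin contraction for total variation.

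First, fix $k$ and set $r=\hat{r}$. By Assumption \ref{assumption-1}, for every sequence of $r$ pure policies $\sigma_1,\ldots,\sigma_r$ on MDP $k$, the product $\mathbf{P}_{\sigma_1}^{(k)}\cdots\mathbf{P}_{\sigma_r}^{(k)}$ has strictly positive entries. Since the state space $\mathcal{S}^{(k)}$ and the set of pure policies are both finite, there exists
\[\epsilon_k := \min_{\sigma_1,\ldots,\sigma_r}\min_{s,s'\in\mathcal{S}^{(k)}}\bigl(\mathbf{P}_{\sigma_1}^{(k)}\cdots\mathbf{P}_{\sigma_r}^{(k)}\bigr)(s,s')>0,\]
and I take $\epsilon:=\min_k\epsilon_k>0$. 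Any randomized stationary transition matrix $\mathbf{P}_{\pi_j^{(k)}}^{(k)}$ is a convex combination of pure-policy transition matrices (by the definition \eqref{transition-matrix} and the decomposition of $\pi(\cdot|s)$ into its support on $\mathcal{A}^{(k)}$), so expanding the $r$-fold product yields a convex combination of pure-policy products of the same length. Each summand has all entries $\geq\epsilon$, hence every entry of $M:=\mathbf{P}_{\pi_1^{(k)}}^{(k)}\cdots\mathbf{P}_{\pi_r^{(k)}}^{(k)}$ satisfies $M(s,s')\geq\epsilon$, uniformly over $k$ and over the randomized policies.

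Next, I invoke the standard Dobrushin/Markov-contraction argument. For any stochastic matrix $M$ on $\mathcal{S}^{(k)}$, define the Dobrushin ergodic coefficient
\[\delta(M)\;=\;1-\min_{s_1,s_2\in\mathcal{S}^{(k)}}\sum_{s'\in\mathcal{S}^{(k)}}\min\bigl(M(s_1,s'),M(s_2,s')\bigr).\]
It is classical that for any signed row vector $\mu$ with $\mu\mathbf{1}=0$ one has $\|\mu M\|_1\leq \delta(M)\|\mu\|_1$. Applying this to $\mu=d_1^{(k)}-d_2^{(k)}$ (whose components sum to $0$ since both are probability vectors) and using that every entry of $M$ is $\geq\epsilon$, so that $\sum_{s'}\min(M(s_1,s'),M(s_2,s'))\geq|\mathcal{S}^{(k)}|\epsilon$, gives
\[\bigl\|(d_1^{(k)}-d_2^{(k)})M\bigr\|_1\;\leq\;\bigl(1-|\mathcal{S}^{(k)}|\epsilon\bigr)\bigl\|d_1^{(k)}-d_2^{(k)}\bigr\|_1.\]

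Finally, I choose $\tau\geq 1$ large enough that $e^{-1/\tau}\geq 1-(\min_k|\mathcal{S}^{(k)}|)\epsilon$, for example $\tau=-1/\ln\bigl(1-(\min_k|\mathcal{S}^{(k)}|)\epsilon\bigr)$ (with $\tau$ increased to $1$ if needed). The contraction bound above then holds uniformly in $k$ and uniformly over all choices of randomized stationary policies $\pi_1^{(k)},\ldots,\pi_r^{(k)}$, which is exactly the claim. The only mildly delicate step is the passage from pure to randomized policies: it is tempting to try to bound the entries of a single $\mathbf{P}_{\pi^{(k)}}^{(k)}$, but such matrices generally have zero entries and need not mix in one step; the key is that the convex-combination argument is applied after forming the full $r$-fold product, so the positive lower bound $\epsilon$ obtained on pure-policy products transfers to the randomized case without loss.
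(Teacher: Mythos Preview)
Your proof is correct and follows essentially the same approach as the paper: both arguments reduce to pure policies by expanding the $r$-fold product as a convex combination, extract a uniform entrywise lower bound $\epsilon>0$, and convert this into an $\ell_1$-contraction. The only cosmetic difference is in that last step: the paper writes the product as $\delta\mathbf{\Pi}+(1-\delta)\mathbf{Q}$ with $\mathbf{\Pi}$ the all-$1/|\mathcal{S}|$ matrix and uses $(d_1-d_2)\mathbf{\Pi}=0$, whereas you invoke the Dobrushin coefficient directly; your route yields the slightly sharper factor $1-|\mathcal{S}^{(k)}|\epsilon$ instead of $1-\epsilon$, but this has no bearing on the lemma.
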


\begin{lemma}\label{lemma:prod-chain}
\xcolor{Suppose Assumption \ref{assumption-1} and \ref{assumption:indep-trans} hold.}
Consider the product MDP with product state space $\mathcal{S}^{(1)}\times\cdots\times \mathcal{S}^{(K)}$ and action space $\mathcal{A}^{(1)}\times\cdots\times \mathcal{A}^{(K)}$. Then, the following hold:
\begin{enumerate}
\item  The product MDP is irreducible and aperiodic under any joint randomized stationary policy.
\item The stationary state-action probability $\{\theta^{(k)}\}_{k=1}^K$ of any joint randomized stationary policy satisfies $\theta^{(k)}\in\Theta^{(k)},~\forall k\in\{1,2,\cdots,K\}$.
\end{enumerate}
\end{lemma}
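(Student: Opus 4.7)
My plan is to dispose of part 2 quickly once part 1 is in hand, and to concentrate the main effort on part 1. Granted part 1, the joint chain has a unique stationary distribution $d_\Pi$ on the product state space. Define the joint state-action distribution $\theta_\Pi(\mathbf{a},\mathbf{s}):=d_\Pi(\mathbf{s})\Pi(\mathbf{a}|\mathbf{s})$, and its MDP-$k$ marginal $\theta^{(k)}(a,s):=\sum_{\mathbf{s}^{(-k)},\mathbf{a}^{(-k)}}\theta_\Pi((a,\mathbf{a}^{(-k)}),(s,\mathbf{s}^{(-k)}))$. By Assumption~\ref{assumption:indep-trans} the joint transition factorizes across $k$, so $\theta_\Pi$ satisfies $\sum_{\mathbf{s},\mathbf{a}}\theta_\Pi(\mathbf{a},\mathbf{s})\prod_j P_{a^{(j)}}^{(j)}(s^{(j)},s'^{(j)}) = \sum_{\mathbf{a}}\theta_\Pi(\mathbf{a},\mathbf{s}')$ for every joint state $\mathbf{s}'$. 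Summing both sides over $s'^{(-k)}$ and using $\sum_{s'^{(j)}}P_{a^{(j)}}^{(j)}(s^{(j)},s'^{(j)})=1$ for $j\neq k$ reduces this to $\sum_{s,a}\theta^{(k)}(a,s)P_a^{(k)}(s,s'^{(k)}) = \sum_a \theta^{(k)}(a,s'^{(k)})$, which is exactly the defining balance of $\Theta^{(k)}$; non-negativity and summation to one of $\theta^{(k)}$ are inherited from $\theta_\Pi$.

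For part 1, I would write $\mathbf{P}_\Pi$ as the convex combination $\sum_{\bar\sigma}\mu(\bar\sigma)\mathbf{P}_{\bar\sigma}$ over joint pure policies $\bar\sigma:\prod_k\mathcal{S}^{(k)}\to\prod_k\mathcal{A}^{(k)}$ with weights $\mu(\bar\sigma):=\prod_{\mathbf{s}}\Pi(\bar\sigma(\mathbf{s})|\mathbf{s})$. Then $[\mathbf{P}_\Pi]^{\widehat{r}}$ is itself a convex combination of $\widehat{r}$-fold products $\mathbf{P}_{\bar\sigma_1}\cdots\mathbf{P}_{\bar\sigma_{\widehat{r}}}$, so it suffices to show each such product has strictly positive entries to conclude irreducibility and aperiodicity. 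Using Assumption~\ref{assumption:indep-trans}, the $(\mathbf{s},\mathbf{s}')$-entry of the product expands to $\sum_{\mathbf{s}_1,\ldots,\mathbf{s}_{\widehat r-1}}\prod_t\prod_k P_{\bar\sigma_t^{(k)}(\mathbf{s}_t)}^{(k)}(s_t^{(k)},s_{t+1}^{(k)})$, so I would lower bound this sum by exhibiting a single joint trajectory $\mathbf{s}=\mathbf{s}_0,\mathbf{s}_1,\ldots,\mathbf{s}_{\widehat{r}}=\mathbf{s}'$ whose summand is strictly positive.

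The main obstacle is the coordinated construction of such a trajectory when the joint pure policies are non-separable: the pure policy induced on MDP $k$ at time $t$, namely $\pi_t^{(k)}(s):=\bar\sigma_t^{(k)}(s,\mathbf{s}_t^{(-k)})$, depends on the other MDPs' states at that time. My approach is to observe that once the trajectories of the other MDPs are fixed, each $\pi_t^{(k)}$ is an ordinary pure policy on MDP $k$, so Assumption~\ref{assumption-1} gives $[\mathbf{P}_{\pi_0^{(k)}}^{(k)}\cdots\mathbf{P}_{\pi_{\widehat r-1}^{(k)}}^{(k)}](s_0^{(k)},s_{\widehat r}^{(k)})>0$ and therefore yields a positive-probability per-MDP trajectory for MDP $k$. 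I would then stitch these conditional trajectories together, leveraging that the one-step reachable set of joint states factors as a product (again by Assumption~\ref{assumption:indep-trans}), via a sequential construction across $k=1,\ldots,K$---potentially inflating the horizon from $\widehat{r}$ to $K\widehat{r}$ if needed---so that each per-MDP trajectory remains valid once the others are fixed. The delicate point is precisely this mutual consistency, since the per-MDP positive-probability paths depend on the other MDPs' choices, and a careful fixed-point or inductive selection is required to make them simultaneously compatible.
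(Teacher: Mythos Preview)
Your treatment of part~2 is correct and essentially identical to the paper's argument: marginalize the joint stationary state-action distribution and use the factorization from Assumption~\ref{assumption:indep-trans} to collapse the product-space balance equation to the MDP-$k$ balance defining $\Theta^{(k)}$.

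For part~1, you correctly identify the central difficulty---the per-MDP pure policy induced by a non-separable joint pure policy at each step depends on the other MDPs' current states, so the per-MDP trajectories cannot be chosen independently---but you do not resolve it. Your proposed ``sequential construction across $k$'' or ``fixed-point selection'' is only a sketch: it is not clear how inflating the horizon to $K\widehat{r}$ would break the circularity, since at every step each MDP's action still depends on the full joint state, and constructing MDP~1's path first still leaves its action sequence undetermined until the other paths are fixed (and vice versa). This is a genuine gap.

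The paper avoids this circularity by abandoning the search for a \emph{single} positive-probability joint trajectory and instead lower-bounding the \emph{sum} over all intermediate joint paths. The key observation is: once the intermediate paths $\mathbf{s}^{(j)}$, $j\neq 1$, are held fixed, the induced action on MDP~1 at each time step is a bona fide pure policy on MDP~1 (a deterministic function of $s^{(1)}$ alone, since the other coordinates are frozen), so the inner sum over MDP~1's intermediate path is exactly the $(s^{(1)},\tilde s^{(1)})$-entry of a product of $r$ pure-policy transition matrices on MDP~1. By Assumption~\ref{assumption-1} this entry is bounded below by some $\delta>0$ \emph{uniformly} over all choices of the other MDPs' paths. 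One then replaces this inner sum by its infimum, obtains a factor $\delta$ times a sum involving only MDPs $2,\ldots,K$, and iterates. This ``hard decoupling via infima'' sidesteps the mutual-consistency problem you flagged and delivers positivity of the $r$-step joint transition directly, with no trajectory synchronization required.
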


An immediate conclusion we can draw from this lemma is that given any penalty and constraint functions $\mathbf{f}^{(k)}$ and $\mathbf{g}_{i}^{(k)}$, $k=1,2,\cdots,K$, the stationary penalty and constraint value of any joint randomized stationary policy can be expressed as 
$$
\sum_{k=1}^K\l\langle\mathbf{f}^{(k)},\theta^{(k)} \r\rangle,~\sum_{k=1}^K\l\langle\mathbf{g}_i^{(k)},\theta^{(k)} \r\rangle,~~i=1,2,\cdots,m,
$$
with $\theta^{(k)}\in\Theta^{(k)}$. 
This in turn implies such stationary state-action probabilities $\{\theta^{(k)}\}_{k=1}^{K}$ can also be realized via a separable randomized stationary policy
$\pi$ with 
\begin{equation}\label{eq:relation}
\pi^{(k)}(a|s) = \frac{\theta^{(k)}(a,s)}{\sum_{a\in\mathcal{A}^{(k)}}\theta^{(k)}(a,s)},~a\in\mathcal{A}^{(k)},~s\in\mathcal{S}^{(k)},
\end{equation}
and the corresponding stationary penalty and constraint value can also be achieved via this policy. This fact suggests that when considering the stationary state performance only, the class of separable randomized stationary policies is large enough to cover all possible stationary penalty and constraint values.

\xcolor{
In particular, 
let $\tilde\pi=\l( \tilde\pi^{(1)},\cdots,\tilde\pi^{(K)}\r)$ be the separable randomized stationary policy associated with the Slater condition (Assumption \ref{assumption:slater}).  Using the fact that the constraint functions $\mathbf{g}_{i,t}^{(k)},k=1,2,\cdots,K$ (i.e. $w_t$) are i.i.d.and Assumption \ref{assumption:indep-trans} on independence of probability transitions, we have the constraint functions $g_{i,t}^{(k)}$ and the state-action pairs at any time $t$ are mutuallly independent.  Thus, }
\[
\mathbb{E}\left[\sum_{k=1}^Kg_{i,t}^{(k)}\l(a^{(k)}_t,s^{(k)}_t\r)\Big|~ d_{\widetilde{\pi}},\widetilde{\pi}\right]
= \sum_{k=1}^K\l\langle\expect{\mathbf{g}_{i,t}^{(k)}}, \tilde\theta^{(k)} \r\rangle,
\]
where $\tilde\theta^{(k)}$ corresponds to $\tilde\pi$ according to \eqref{eq:relation}.

Then, Slater's condition
can be translated to the following: There exists a sequence of state-action probabilities $\{\tilde\theta^{(k)}\}_{k=1}^{K}$ from a separable randomized stationary policy such that $\tilde\theta^{(k)}\in\Theta^{(k)},~\forall k$, and
\begin{equation}\label{slater-2}
\sum_{k=1}^K\l\langle\expect{\mathbf{g}_{i,t}^{(k)}}, \tilde\theta^{(k)} \r\rangle\leq-\eta,~~i=1,2,\cdots,m,
\end{equation}
The assumption on separability does not lose generality in the sense that if there is no separable randomized stationary policy that satisfies \eqref{slater-2}, then, there is no \textit{joint} randomized stationary policy that satisfies \eqref{slater-2} either.

\subsection{The blessing of slow-update property in online MDPs}\label{sec:dis-slow-change}
\xcolor{
 The current state of an MDP depends on previous states and actions. 
As a consequence, the slot $t$ penalty not only depends on the current penalty function and current action,  but also on the system history.  This complication does not arise in classical online convex optimization (\cite{hazan2016introduction},\cite{zinkevich2003online}) as there is no notion of ``state'' and the slot $t$ penalty depends only on the slot $t$ penalty function and action. 
}

\xcolor{
Now imagine a virtual system where, on each slot $t$, a policy $\pi_t$ is chosen (rather than an action).  Further imagine the MDP immediately reaching its corresponding stationary distribution $d_{\pi_t}$. Then the states and actions on previous slots do not matter and the slot $t$ performance depends only on the chosen policy $\pi_t$ and on the current penalty and constraint functions.  This imaginary system now has a structure similar to classical online convex optimization as in the Zinkevich scenario \cite{zinkevich2003online}. 
}

\xcolor{
A key feature of online convex optimization algorithms as in \cite{zinkevich2003online} is that they update their decision variables slowly.  For a fixed time scale $T$ over which 
$\mathcal{O}(\sqrt{T})$ regret is desired, the decision variables are typically changed no more than a distance $\mathcal{O}(1/\sqrt{T})$ from one slot to the next.  An important insight in prior (unconstrained) MDP works(e.g. \cite{even2009online}, \cite{yu2009markov} and \cite{dick2014online}) is that such slow updates also guarantee the ``approximate'' convergence of an MDP to its stationary distribution.  As a consequence, one can design the decision policies under the imaginary assumption that the system instantly reaches its stationary distribution, and later bound the error between the true system and the imaginary system.  If the error is on the same order as the desired $\mathcal{O}(\sqrt{T})$ regret, then this approach works.  This idea serves as a cornerstone of our algorithm design of the next section, which treats the case of multiple weakly coupled systems with both objective functions and constraint functions. 
}

\section{OCMDP algorithm}\label{sec:algorithm}
\xcolor{
Our proposed algorithm is distributed in the sense that each time slot, each MDP solves its own subproblem and the constraint violations are controlled by a simple update of global multipliers called ``virtual queues'' at the end of each slot.}
Let $\Theta^{(1)},~\Theta^{(2)},~\cdots,~\Theta^{(K)}$ be state-action polyhedron of $K$ MDPs, respectively.
Let $\theta_t^{(k)}\in\Theta^{(k)}$ be a state-action vector at time slot $t$. 
At $t = 0$, each MDP chooses its initial state-action vector $\theta_0^{(k)}$ resulting from any \textit{separable} randomized stationary policy $\pi_0^{(k)}$.
For example, one could choose a uniform policy $\pi^{(k)}(a|s) = 1/\l|\mathcal{A}^{(k)}\r|,~\forall s\in\mathcal{S}^{(k)}$, solve the equation $d_{\pi_0^{(k)}} = d_{\pi_0^{(k)}}\mathbf{P}_{\pi_0^{(k)}}^{(k)}$ to get a probability vector $d_{\pi_0^{(k)}}$, and obtain $\theta_0^{(k)}(a,s) = d_{\pi_0^{(k)}}(s)/\l|\mathcal{A}^{(k)}\r|$. For each constraint $i\in\{1,2,\cdots,m\}$, let $Q_i(t)$ be a \textit{virtual queue} defined over slots $t=0,1,2,\cdots$ with the initial condition $Q_i(0) = Q_i(1) = 0$, and update equation:
\begin{equation}\label{Q-update}
Q_i(t+1) = \max\left\{ Q_i(t) + \sum_{k=1}^K \l\langle\mathbf{g}_{i,t-1}^{(k)},\theta_t\r\rangle,~0\right\},~\forall t\in\{1,2,3,\cdots\}.
\end{equation}
Our algorithm uses two parameters $V>0$ and $\alpha>0$ and makes decisions as follows:
\begin{itemize}
\item At the start of each slot $t\in\{1,2,3,\cdots\}$, the $k$-th MDP observes $Q_i(t),~i=1,2,\cdots,m$ and chooses $\theta_t^{(k)}$ to solve the following subproblem:
\begin{equation}\label{optimize}
\theta_t^{(k)} = \textrm{argmin}_{\theta\in\Theta^{(k)}}\left\langle 
V\mathbf{f}_{t-1}^{(k)}+\sum_{i=1}^mQ_i(t)\mathbf{g}_{i,t-1}^{(k)},\theta\right\rangle
+\alpha\l\|\theta-\theta_{t-1}^{(k)}\r\|_2^2.
\end{equation}
\item Construct the randomized stationary policy $\pi_t^{(k)}$ according to \eqref{solution-to-LP} with 
$\theta=\theta_t^{(k)}$, and choose the action $a_t^{(k)}$ at $k$-th MDP according to the conditional distribution $\pi_t^{(k)}\l(\cdot|s^{(k)}_t\r)$. 
\item Update the virtual queue $Q_i(t)$ according to \eqref{Q-update} for all $i=1,2,\cdots,m$.
\end{itemize}

Note that for any slot $t\geq1$, this algorithm gives a \textit{separable} randomized stationary policy, so that each MDP chooses its own policy based on its own function 
$\mathbf{f}_{t-1}^{(k)}$, $\mathbf{g}_{i,t-1}^{(k)},i\in\{1,2,\cdots,m\}$, and a common multiplier 
$\mathbf{Q}(t) := \l(Q_1(t),\cdots,Q_m(t)\r)$.  

The next lemma shows that solving \eqref{optimize} is in fact a projection onto the state-action polyhedron. For any set $\mathcal{X}\in\mathbb{R}^n$ and a vector $\mathbf{y}\in\mathbb{R}^n$, define the projection operator $\mathcal{P}_\mathcal{X}(\mathbf{y})$ as 
\[
\mathcal{P}_\mathcal{X}(\mathbf{y}) = \textrm{arginf}_{\mathbf{x}\in\mathcal{X}}\|\mathbf{x} - \mathbf{y}\|_2.
\]
\begin{lemma}
Fix an $\alpha>0$ and $t\in\{1,2,3,\cdots\}$. The $\theta_t$ that solves \eqref{optimize} is
\[
\theta_t^{(k)} = \mathcal{P}_{\Theta^{(k)}}\left( \theta_{t-1}^{(k)} - \frac{\mathbf{w}_t^{(k)}}{2\alpha}\right),
\]
where 
$
\mathbf{w}_t^{(k)} = V\mathbf{f}_{t-1}^{(k)}+\sum_{i=1}^mQ_i(t)\mathbf{g}_{i,t-1}^{(k)}\in\mathbb{R}^{|\mathcal{A}^{(k)}||\mathcal{S}^{(k)}|}.
$
\end{lemma}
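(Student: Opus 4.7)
The plan is a straightforward completion of squares. Writing the objective in \eqref{optimize} in terms of $\mathbf{w}_t^{(k)}$, I would show that
\begin{align*}
\bigl\langle \mathbf{w}_t^{(k)},\theta \bigr\rangle + \alpha\bigl\|\theta-\theta_{t-1}^{(k)}\bigr\|_2^2
&= \alpha\bigl\|\theta\bigr\|_2^2 - 2\alpha\Bigl\langle \theta_{t-1}^{(k)} - \tfrac{\mathbf{w}_t^{(k)}}{2\alpha},\,\theta\Bigr\rangle + \alpha\bigl\|\theta_{t-1}^{(k)}\bigr\|_2^2 \\
&= \alpha\Bigl\|\theta - \bigl(\theta_{t-1}^{(k)} - \tfrac{\mathbf{w}_t^{(k)}}{2\alpha}\bigr)\Bigr\|_2^2 + C,
\end{align*}
where $C$ collects the remaining terms $\alpha\|\theta_{t-1}^{(k)}\|_2^2 - \alpha\|\theta_{t-1}^{(k)} - \mathbf{w}_t^{(k)}/(2\alpha)\|_2^2$, which are constants independent of $\theta$.

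Once the objective is in this form, minimizing over $\theta \in \Theta^{(k)}$ is equivalent to minimizing $\|\theta - (\theta_{t-1}^{(k)} - \mathbf{w}_t^{(k)}/(2\alpha))\|_2$ over $\Theta^{(k)}$, since $\alpha>0$ and $C$ does not affect the argmin. By definition of the projection operator $\mathcal{P}_{\Theta^{(k)}}$, the minimizer is exactly $\mathcal{P}_{\Theta^{(k)}}(\theta_{t-1}^{(k)} - \mathbf{w}_t^{(k)}/(2\alpha))$. The projection is well-defined (and unique) because $\Theta^{(k)}$ is a nonempty closed convex polyhedron in a finite-dimensional Euclidean space and the squared-distance is strictly convex.

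There is no genuine obstacle in this argument: the lemma is a direct consequence of completing the square, and no further structure of $\Theta^{(k)}$ beyond being closed and convex is needed. I would simply present the two-line algebraic identity and invoke the definition of $\mathcal{P}_{\Theta^{(k)}}$ to conclude.
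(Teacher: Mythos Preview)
Your proposal is correct and follows essentially the same approach as the paper: both complete the square to rewrite the objective as $\alpha\|\theta - (\theta_{t-1}^{(k)} - \mathbf{w}_t^{(k)}/(2\alpha))\|_2^2$ plus a constant independent of $\theta$, and then invoke the definition of the projection. The only cosmetic difference is that the paper first shifts the linear term by writing $\langle \mathbf{w}_t^{(k)},\theta\rangle = \langle \mathbf{w}_t^{(k)},\theta-\theta_{t-1}^{(k)}\rangle + \langle \mathbf{w}_t^{(k)},\theta_{t-1}^{(k)}\rangle$ before completing the square in the variable $\theta-\theta_{t-1}^{(k)}$, whereas you expand $\|\theta-\theta_{t-1}^{(k)}\|_2^2$ directly; the algebra and conclusion are identical.
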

\begin{proof}
By definition, we have
\begin{align*}
\theta_t^{(k)} =& \textrm{argmin}_{\theta\in\Theta^{(k)}} \l\langle \mathbf{w}_t^{(k)} , \theta \r\rangle + \alpha \l\| \theta - \theta^{(k)}_{t-1} \r\|_2^2\\
=& \textrm{argmin}_{\theta\in\Theta^{(k)}}   \l\langle \mathbf{w}_t^{(k)} , \theta - \theta^{(k)}_{t-1} \r\rangle + \alpha \l\| \theta - \theta^{(k)}_{t-1} \r\|_2^2
+   \l\langle \mathbf{w}_t^{(k)}, \theta^{(k)}_{t-1} \r\rangle\\
=& \textrm{argmin}_{\theta\in\Theta^{(k)}}~  \alpha\cdot \l(\l\langle \frac{\mathbf{w}_t^{(k)}}{\alpha} , \theta - \theta^{(k)}_{t-1} \r\rangle + \l\| \theta - \theta^{(k)}_{t-1} \r\|_2^2\r)
+   \l\langle \mathbf{w}_t^{(k)}, \theta^{(k)}_{t-1} \r\rangle\\
=& \textrm{argmin}_{\theta\in\Theta^{(k)}}~ \alpha\cdot\l\| \theta -  \theta^{(k)}_{t-1} + \frac{\mathbf{w}_t^{(k)}}{2\alpha} \r\|_2^2
=\mathcal{P}_{\Theta^{(k)}}\left( \theta_{t-1}^{(k)} - \frac{\mathbf{w}_t^{(k)}}{2\alpha}\right),
\end{align*}
finishing the proof.
\end{proof}

\subsection{Intuition of the algorithm and roadmap of analysis}
The intuition of this algorithm follows from the discussion in Section \ref{sec:dis-slow-change}. Instead of the Markovian regret \eqref{main-regret} and constraint set \eqref{main-constraint}, we work on the imaginary system that after the decision maker chooses any joint policy $\Pi_t$ and the penalty/constraint functions are revealed, the $K$ parallel Markov chains reach stationary state distribution right away, with state-action probability vectors $\l\{\theta_t^{(k)}\r\}_{k=1}^K$ for $K$ parallel MDPs. Thus there is no Markov state in such a system anymore and the corresponding stationary
 penalty and constraint function value at time $t$ can be expressed as $\sum_{k=1}^K\l\langle\mathbf{f}_t^{(k)},\theta_t^{(k)}\r\rangle$ and 
$\sum_{k=1}^K\l\langle\mathbf{g}_{i,t}^{(k)},\theta_t^{(k)}\r\rangle,~i=1,2,\cdots,m$, respectively. As a consequence, we are now facing a relatively easier task 
of minimizing the following regret:
\begin{equation}\label{stat-regret}
\sum_{t=0}^{T-1}\sum_{k=1}^K\expect{\l\langle\mathbf{f}_t^{(k)},\theta_t^{(k)}\r\rangle} - \sum_{t=0}^{T-1}\sum_{k=1}^K\expect{\l\langle\mathbf{f}_t^{(k)},\theta^{(k)}_*\r\rangle},
\end{equation}
where $\l\{\theta^{(k)}_*\r\}_{k=1}^K$ are the state-action probabilities corresponding to the best fixed joint randomized stationary policy within the following stationary constraint set \begin{equation}\label{stat-constraint}
\overline{\mathcal{G}}:=\left\{ \theta^{(k)}\in\Theta^{(k)},~k\in\{1,2,\cdots,K\}:~\sum_{k=1}^K\l\langle\expect{\mathbf{g}_{i,t}^{(k)}},\theta^{(k)}\r\rangle \leq 0,~i=1,2,\cdots,m\right\},
\end{equation}
with the assumption that Slater's condition \eqref{slater-2} holds.

\xcolor{
To analyze the proposed algorithm, we need to tackle the following two major challenges:
\begin{itemize}
\item Whether or not the policy decision of the proposed algorithm would yield $\mathcal{O}(\sqrt{T})$ regret and constraint violation on the imaginary system that reaches steady state instantaneously on each slot.
\item Whether the error between the imaginary and true systems can be bounded by $\mathcal{O}(\sqrt{T})$. 
\end{itemize}
}

\xcolor{
In the next section, we answer these questions via a multi-stage analysis piecing together the results of MDPs from Section \ref{sec:prelim-thm} with
multiple ingredients from convex analysis and stochastic queue analysis. We first show the 
$\mathcal{O}(\sqrt{T})$ regret and constraint violation in the imaginary online linear program incorporating a new regret analysis procedure with a stochastic drift analysis for queue processes. 
Then, we show if the benchmark randomized stationary algorithm always starts from its stationary state, then,
the discrepancy of regrets between the imaginary and true systems can be controlled via the slow-update property of the proposed algorithm together with the properties of MDPs developed in Section \ref{sec:prelim-thm}. Finally, for the problem with arbitrary non-stationary starting state, we reformulate it as a perturbation on the aforementioned stationary state problem and 
analyze the perturbation via Farkas' Lemma.
}

\section{Convergence time analysis}\label{sec:convergence-analysis}

\subsection{Stationary state performance: An online linear program}\label{sec:stat-analysis}


Let $\mathbf{Q}(t):=[Q_1(t),~Q_2(t),~\cdots,~Q_m(t)]$ be the virtual queue vector and $L(t) = \frac12\|\mathbf{Q}(t)\|_2^2$. Define the drift
$\Delta(t) := L(t+1) - L(t)$.

\subsubsection{Sample-path analysis}
This section develops a couple of bounds given a sequence of penalty functions $f_{0}^{(k)},f_{1}^{(k)},\cdots,f_{T-1}^{(k)}$ and constraint functions $g_{i,0}^{(k)},g_{i,1}^{(k)},\cdots,g_{i,T-1}^{(k)}$.
The following lemma provides bounds for virtual queue processes:
\begin{lemma}\label{lemma:pre-Q-bound}
For any $i\in\{1,2,\cdots,m\}$ at $T\in\{1,2,\cdots\}$, the following holds under the virtual queue update \eqref{Q-update},
\[
\sum_{t=1}^{T}\sum_{k=1}^K\l\langle \mathbf{g}_{i,t-1}^{(k)}, \theta_{t-1}^{(k)} \r\rangle 
\leq Q_i(T+1) - Q_i(1) +  \Psi\sum_{t=1}^T\sum_{k=1}^K\sqrt{\l| \mathcal{A}^{(k)} \r|\l| \mathcal{S}^{(k)} \r|}\l\| \theta^{(k)}_t - \theta^{(k)}_{t-1} \r\|_2,
\]
\xcolor{where $\Psi>0$ is the constant defined in \eqref{function-bounds}.}
\end{lemma}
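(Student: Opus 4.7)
The plan is to exploit the virtual queue recursion \eqref{Q-update} via telescoping, and then handle the mismatch between $\theta_t^{(k)}$ appearing in the update and $\theta_{t-1}^{(k)}$ appearing in the desired inequality through Cauchy--Schwarz.

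First I would drop the $\max$ in the update \eqref{Q-update} and use the one-sided bound
\[
Q_i(t+1) \geq Q_i(t) + \sum_{k=1}^K \l\langle \mathbf{g}_{i,t-1}^{(k)}, \theta_t^{(k)} \r\rangle,
\]
which rearranges to $\sum_{k=1}^K \langle \mathbf{g}_{i,t-1}^{(k)}, \theta_t^{(k)} \rangle \leq Q_i(t+1) - Q_i(t)$. Summing over $t=1,\dots,T$ produces a telescoping sum and yields
\[
\sum_{t=1}^{T} \sum_{k=1}^K \l\langle \mathbf{g}_{i,t-1}^{(k)}, \theta_t^{(k)} \r\rangle \;\leq\; Q_i(T+1) - Q_i(1).
\]

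Next I would insert and subtract $\theta_t^{(k)}$ inside the inner product on the left-hand side of the target bound:
\[
\l\langle \mathbf{g}_{i,t-1}^{(k)}, \theta_{t-1}^{(k)} \r\rangle
= \l\langle \mathbf{g}_{i,t-1}^{(k)}, \theta_t^{(k)} \r\rangle
+ \l\langle \mathbf{g}_{i,t-1}^{(k)}, \theta_{t-1}^{(k)} - \theta_t^{(k)} \r\rangle.
\]
Summing and applying the telescoping bound to the first piece gives the $Q_i(T+1) - Q_i(1)$ term. For the second piece, Cauchy--Schwarz yields
\[
\l\langle \mathbf{g}_{i,t-1}^{(k)}, \theta_{t-1}^{(k)} - \theta_t^{(k)} \r\rangle
\leq \l\| \mathbf{g}_{i,t-1}^{(k)} \r\|_2 \cdot \l\| \theta_{t-1}^{(k)} - \theta_t^{(k)} \r\|_2.
\]
Since $\mathbf{g}_{i,t-1}^{(k)}$ is an $|\mathcal{A}^{(k)}||\mathcal{S}^{(k)}|$-dimensional vector whose entries are all bounded in absolute value by $\Psi$ (by \eqref{function-bounds}), we have $\|\mathbf{g}_{i,t-1}^{(k)}\|_2 \leq \Psi \sqrt{|\mathcal{A}^{(k)}||\mathcal{S}^{(k)}|}$. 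Putting these together gives exactly the stated inequality.

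There is no real obstacle here; the only mildly subtle point is that the queue update pairs the slot-$(t-1)$ constraint function with the slot-$t$ state-action vector, so one must be careful to split off the $\theta_{t-1}-\theta_t$ error term. The remaining ingredients (telescoping, Cauchy--Schwarz, and an entrywise $\ell_\infty$-to-$\ell_2$ bound) are entirely routine.
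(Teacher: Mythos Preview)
Your proposal is correct and essentially identical to the paper's own proof: both drop the $\max$ in \eqref{Q-update}, split $\theta_{t-1}^{(k)}$ from $\theta_t^{(k)}$ via an add-and-subtract, apply Cauchy--Schwarz to the difference, bound $\|\mathbf{g}_{i,t-1}^{(k)}\|_2$ by $\Psi\sqrt{|\mathcal{A}^{(k)}||\mathcal{S}^{(k)}|}$, and telescope. The only cosmetic difference is that the paper performs the split inside the per-slot inequality before summing, whereas you telescope first and then split; the logic is the same.
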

\begin{proof}
By the queue updating rule \eqref{Q-update}, for any $t\in\mathbb{N}$,
\begin{align*}
Q_i(t+1) =& \max\l\{ Q_i(t) + \sum_{k=1}^K\l\langle \mathbf{g}_{i,t-1}^{(k)}, \theta_{t}^{(k)} \r\rangle,0 \r\} \\
\geq& Q_i(t) + \sum_{k=1}^K\l\langle \mathbf{g}_{i,t-1}^{(k)}, \theta_{t}^{(k)} \r\rangle\\
=& Q_i(t) + \sum_{k=1}^K\l\langle \mathbf{g}_{i,t-1}^{(k)}, \theta_{t-1}^{(k)} \r\rangle + \sum_{k=1}^K\l\langle \mathbf{g}_{i,t-1}^{(k)}, \theta_t^{(k)}-\theta_{t-1}^{(k)} \r\rangle\\
\geq&  Q_i(t) + \sum_{k=1}^K\l\langle \mathbf{g}_{i,t-1}^{(k)}, \theta_{t-1}^{(k)} \r\rangle - \sum_{k=1}^K\l\|g_{i,t-1}^{(k)}\r\|_2\l\|\theta_t^{(k)}-\theta_{t-1}^{(k)}\r\|_2, 
\end{align*}
Note that the constraint functions are deterministically bounded,
\[
\l\|g_{i,t-1}^{(k)}\r\|_2^2\leq \l| \mathcal{A}^{(k)} \r|\l| \mathcal{S}^{(k)} \r|\Psi^2.
\]
Substituting this bound into the above queue bound and rearranging the terms finish the proof.
\end{proof}

The next lemma provides a bound for the drift $\Delta(t)$.
\begin{lemma}\label{D-bound}
For any slot $t\geq1$, we have 
\[
\Delta(t)\leq \frac12mK^2\Psi^2+\sum_{i=1}^mQ_i(t)\sum_{k=1}^K\l\langle \mathbf{g}_{i,t-1}^{(k)}, \theta_{t}^{(k)} \r\rangle .
\]
\end{lemma}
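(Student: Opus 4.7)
\medskip

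\noindent\textbf{Proof proposal for Lemma \ref{D-bound}.}
The plan is a standard drift-plus-penalty style manipulation applied componentwise to the virtual queues. The key observation is that the $\max\{\cdot,0\}$ in the update \eqref{Q-update} can be dropped after squaring: if $a=Q_i(t)+\sum_{k=1}^K\langle \mathbf{g}_{i,t-1}^{(k)},\theta_t^{(k)}\rangle$, then $Q_i(t+1)=\max\{a,0\}$ satisfies $Q_i(t+1)^2\le a^2$ (trivially when $a\ge 0$, and because $Q_i(t+1)=0$ when $a<0$). This single inequality is the only place where the nonlinearity of the queue update enters.

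Starting from this bound, I would expand the square on the right and rearrange to obtain
\[
\tfrac{1}{2}\bigl(Q_i(t+1)^2-Q_i(t)^2\bigr)\le Q_i(t)\sum_{k=1}^{K}\langle \mathbf{g}_{i,t-1}^{(k)},\theta_t^{(k)}\rangle+\tfrac{1}{2}\Bigl(\sum_{k=1}^{K}\langle \mathbf{g}_{i,t-1}^{(k)},\theta_t^{(k)}\rangle\Bigr)^{2}.
\]
Summing this identity over $i=1,\dots,m$ yields $\Delta(t)$ on the left-hand side, together with the cross term already appearing in the statement and a quadratic residual $\tfrac{1}{2}\sum_{i=1}^{m}\bigl(\sum_{k=1}^{K}\langle \mathbf{g}_{i,t-1}^{(k)},\theta_t^{(k)}\rangle\bigr)^{2}$ that needs to be controlled by the constant $\tfrac{1}{2}mK^{2}\Psi^{2}$.

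For the residual, I would use that each $\theta_t^{(k)}\in\Theta^{(k)}\subseteq\Delta$ is a probability vector on $\mathcal{S}^{(k)}\times\mathcal{A}^{(k)}$ and that $|\hat g_i^{(k)}|\le\Psi$ by \eqref{function-bounds}, so that
\[
\bigl|\langle \mathbf{g}_{i,t-1}^{(k)},\theta_t^{(k)}\rangle\bigr|\le\Psi\sum_{(a,s)}\theta_t^{(k)}(a,s)=\Psi.
\]
Summing $K$ such terms and squaring gives the per-$i$ bound $K^{2}\Psi^{2}$; a further sum over the $m$ constraints yields exactly the additive constant $\tfrac{1}{2}mK^{2}\Psi^{2}$ that appears in the statement.

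There is no real obstacle here; the only subtlety is remembering that the drop-the-max step requires squaring (not just dropping the max), and that the $\ell_1$-type bound on $\langle \mathbf{g}_{i,t-1}^{(k)},\theta_t^{(k)}\rangle$ exploits the probability-simplex structure of $\Theta^{(k)}$, not a Cauchy--Schwarz bound (which would yield the looser $\Psi\sqrt{|\mathcal{A}^{(k)}||\mathcal{S}^{(k)}|}$ already used in Lemma \ref{lemma:pre-Q-bound}). Combining these steps gives the claimed drift inequality.
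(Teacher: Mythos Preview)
Your proposal is correct and follows essentially the same approach as the paper's proof: drop the $\max\{\cdot,0\}$ after squaring, expand, sum over $i$, and bound the residual by $\bigl|\sum_{k}\langle \mathbf{g}_{i,t-1}^{(k)},\theta_t^{(k)}\rangle\bigr|\le K\Psi$ using the simplex structure of $\theta_t^{(k)}$. The paper writes this last step as $K\|\mathbf{g}_{i,t-1}^{(k)}\|_\infty\|\theta_t^{(k)}\|_1$, which is exactly your argument phrased via H\"older.
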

\begin{proof}
By definition, we have
\begin{align*}
\Delta(t) =& \frac12\|\mathbf{Q}(t+1)\|_2^2 - \frac12\|\mathbf{Q}(t)\|_2^2\\
\leq&\frac12\sum_{i=1}^m\l( \l(Q_i(t) + \sum_{k=1}^K\l\langle \mathbf{g}_{i,t-1}^{(k)}, \theta_{t}^{(k)} \r\rangle \r)^2 - Q_i(t)^2 \r)\\
=& \sum_{i=1}^mQ_i(t)\sum_{k=1}^K\l\langle \mathbf{g}_{i,t-1}^{(k)}, \theta_{t}^{(k)} \r\rangle 
+ \frac12\sum_{i=1}^m\l(\sum_{k=1}^K\l\langle \mathbf{g}_{i,t-1}^{(k)}, \theta_{t}^{(k)} \r\rangle\r)^2.
\end{align*}
Note that by the queue update \eqref{Q-update}, we have
$$
\l| \sum_{k=1}^K \l\langle \mathbf{g}_{i,t-1}^{(k)}, \theta_{t}^{(k)} \r\rangle\r|
\leq K\l\| \mathbf{g}_{i,t-1}^{(k)} \r\|_\infty\l\| \theta_{t}^{(k)} \r\|_1\leq K\Psi.
$$
Substituting this bound into the drift bound finishes the proof.
\end{proof}

Consider a convex set $\mathcal{X}\subseteq\mathbb{R}^n$.
Recall that for a fixed real number $c>0$, a function $h:\mathcal{X}\rightarrow\mathbb{R}$ is said to be \textit{$c$-strongly convex}, if $h(x) - \frac{c}{2}\|x\|_2^2$ is convex over 
$x\in\mathcal{X}$. It is easy to see that if $q:\mathcal{X}\rightarrow\mathbb{R}$ is convex, $c>0$ and $b\in\mathbb{R}^n$, the function $q(x) + \frac{c}{2}\|x-b\|_2^2$ is $c$-strongly convex. Furthermore,  if the function $h$ is $c$-strongly convex that is minimized at a point $x_{\min}\in\mathcal{X}$, then (see, e.g., Corollary 1 in \cite{YuNeely17SIOPT}):
\begin{equation}\label{strongly-convex}
h(x_{\min})\leq h(y) - \frac{c}{2}\|y-x_{\min}\|_2^2, ~~\forall y\in\mathcal{X}.
\end{equation}
The following lemma is a direct consequence of the above strongly convex result. It also demonstrates the key property of our minimization subproblem \eqref{optimize}.

\begin{lemma}\label{strong-convex-bound}
The following bound holds for any $k\in\{1,2,\cdots,K\}$ and any fixed $\theta_*^{(k)}\in\Theta^{(k)}$:
\begin{multline}\label{DPP-bound}
V  \l\langle \mathbf{f}_{t-1}^{(k)}, \theta_t^{(k)} - \theta_{t-1}^{(k)} \r\rangle + \sum_{i=1}^mQ_i(t)
\l\langle\mathbf{g}_{i,t-1}^{(k)},\theta_t^{(k)}\r\rangle + \alpha\|\theta_t^{(k)}-\theta_{t-1}^{(k)}\|_2^2\\
\leq V\l\langle \mathbf{f}_{t-1}^{(k)}, \theta_*^{(k)} - \theta_{t-1}^{(k)} \r\rangle + \sum_{i=1}^mQ_i(t)
\l\langle\mathbf{g}_{i,t-1}^{(k)},\theta_*^{(k)}\r\rangle + \alpha\|\theta_*^{(k)} - \theta_{t-1}^{(k)}\|_2^2
-\alpha\|\theta_*^{(k)}-\theta_t^{(k)}\|_2^2.
\end{multline}
\end{lemma}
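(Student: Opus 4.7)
The plan is to apply the strong-convexity fact \eqref{strongly-convex} directly to the subproblem \eqref{optimize}. Define
\[
h(\theta) \;=\; \Bigl\langle V\mathbf{f}_{t-1}^{(k)} + \sum_{i=1}^m Q_i(t)\mathbf{g}_{i,t-1}^{(k)},\, \theta\Bigr\rangle + \alpha\bigl\|\theta - \theta_{t-1}^{(k)}\bigr\|_2^2.
\]
The first term is linear (hence convex) in $\theta$, and the quadratic term has the form $\frac{c}{2}\|\theta - b\|_2^2$ with $c = 2\alpha$ and $b = \theta_{t-1}^{(k)}$. By the observation recalled just before the lemma statement, $h$ is therefore $2\alpha$-strongly convex on $\Theta^{(k)}$. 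By the definition of the algorithm, $\theta_t^{(k)}$ is the minimizer of $h$ over the convex set $\Theta^{(k)}$.

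Now I apply the strong-convexity inequality \eqref{strongly-convex} with $c = 2\alpha$, $x_{\min} = \theta_t^{(k)}$, and $y = \theta_*^{(k)} \in \Theta^{(k)}$. This immediately gives
\[
h(\theta_t^{(k)}) \;\leq\; h(\theta_*^{(k)}) - \alpha\bigl\|\theta_*^{(k)} - \theta_t^{(k)}\bigr\|_2^2.
\]
Substituting the definition of $h$ on both sides yields
\begin{multline*}
\Bigl\langle V\mathbf{f}_{t-1}^{(k)} + \sum_{i=1}^m Q_i(t)\mathbf{g}_{i,t-1}^{(k)},\, \theta_t^{(k)}\Bigr\rangle + \alpha\bigl\|\theta_t^{(k)} - \theta_{t-1}^{(k)}\bigr\|_2^2 \\
\leq \Bigl\langle V\mathbf{f}_{t-1}^{(k)} + \sum_{i=1}^m Q_i(t)\mathbf{g}_{i,t-1}^{(k)},\, \theta_*^{(k)}\Bigr\rangle + \alpha\bigl\|\theta_*^{(k)} - \theta_{t-1}^{(k)}\bigr\|_2^2 - \alpha\bigl\|\theta_*^{(k)} - \theta_t^{(k)}\bigr\|_2^2.
\end{multline*}

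Finally, to match the exact form in the lemma, I subtract $\langle V\mathbf{f}_{t-1}^{(k)},\, \theta_{t-1}^{(k)}\rangle$ from both sides so that the $V$-weighted penalty appears as an inner product with the difference $\theta_t^{(k)} - \theta_{t-1}^{(k)}$ on the left and $\theta_*^{(k)} - \theta_{t-1}^{(k)}$ on the right; the $Q_i(t)$ terms are left untouched. This yields \eqref{DPP-bound} verbatim. There is no real obstacle here: the whole argument is a one-line application of the strong-convexity inequality, and the only subtlety is identifying the correct strong-convexity constant ($2\alpha$, not $\alpha$), which then produces the precise coefficient $\alpha$ on the $\|\theta_*^{(k)} - \theta_t^{(k)}\|_2^2$ slack term.
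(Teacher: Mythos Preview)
Your proof is correct and follows exactly the paper's approach: apply the $2\alpha$-strong-convexity inequality \eqref{strongly-convex} to the objective of \eqref{optimize} at its minimizer $\theta_t^{(k)}$, then shift by the constant $\langle V\mathbf{f}_{t-1}^{(k)},\theta_{t-1}^{(k)}\rangle$ to obtain the displayed form. The only cosmetic difference is that the paper defines $h$ with the shift already built in, while you apply the shift afterward; the content is identical.
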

This lemma follows easily from the fact that the proposed algorithm \eqref{optimize} gives $\theta_t^{(k)}\in\Theta^{(k)}$ minimizing the left hand side, which is a strongly convex function, and then, applying \eqref{strongly-convex}, with 
\[
h\l(\theta^{(k)}_*\r) = V  \l\langle \mathbf{f}_{t-1}^{(k)}, \theta^{(k)}_* - \theta_{t-1}^{(k)} \r\rangle + \sum_{i=1}^mQ_i(t)
\l\langle\mathbf{g}_{i,t-1}^{(k)},\theta^{(k)}_*\r\rangle + \alpha\l\|\theta^{(k)}_*-\theta_{t-1}^{(k)}\r\|_2^2
\]

Combining the previous two lemmas gives the following ``drift-plus-penalty'' bound.
\begin{lemma}
For any fixed $\{\theta_*^{(k)}\}_{k=1}^K$ such that $\theta_*^{(k)}\in\Theta^{(k)}$ and $t\in\mathbb{N}$, we have the following bound,
\begin{multline}\label{dpp-bound}
\Delta(t) + V\sum_{k=1}^{K}\l\langle \mathbf{f}_{t-1}^{(k)}, \theta_t^{(k)} - \theta_{t-1}^{(k)} \r\rangle + \alpha\sum_{k=1}^K\|\theta^{(k)}_t-\theta^{(k)}_{t-1}\|_2^2\\
\leq \frac32mK^2\Psi^2 + V\sum_{k=1}^K\l\langle \mathbf{f}_{t-1}^{(k)}, \theta_*^{(k)} - \theta_{t-1}^{(k)} \r\rangle + \sum_{i=1}^mQ_i(t-1)\sum_{k=1}^K
\l\langle\mathbf{g}_{i,t-1}^{(k)},\theta_*^{(k)}\r\rangle \\
+ \alpha\sum_{k=1}^K\|\theta_*^{(k)} - \theta_{t-1}^{(k)}\|_2^2
-\alpha\sum_{k=1}^K\|\theta_*^{(k)}-\theta_t^{(k)}\|_2^2
\end{multline}
\end{lemma}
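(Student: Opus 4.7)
The plan is to combine Lemma \ref{D-bound} with Lemma \ref{strong-convex-bound} and then reconcile a time-index mismatch between $Q_i(t)$ and $Q_i(t-1)$.

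First I would sum \eqref{DPP-bound} over $k=1,\ldots,K$, producing an upper bound on
$V\sum_k\langle\mathbf{f}_{t-1}^{(k)},\theta_t^{(k)}-\theta_{t-1}^{(k)}\rangle + \sum_i Q_i(t)\sum_k\langle\mathbf{g}_{i,t-1}^{(k)},\theta_t^{(k)}\rangle + \alpha\sum_k\|\theta_t^{(k)}-\theta_{t-1}^{(k)}\|_2^2$
in terms of the benchmark $\theta_*^{(k)}$ plus the telescoping squared-distance term. Then I would add the drift bound from Lemma \ref{D-bound}, namely $\Delta(t)\leq \tfrac12 mK^2\Psi^2 + \sum_i Q_i(t)\sum_k\langle\mathbf{g}_{i,t-1}^{(k)},\theta_t^{(k)}\rangle$. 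The cross-term $\sum_i Q_i(t)\sum_k\langle\mathbf{g}_{i,t-1}^{(k)},\theta_t^{(k)}\rangle$ appears on the left of the summed strong-convexity inequality and on the right of the drift bound, so it cancels after addition. What remains is exactly the claimed inequality \eqref{dpp-bound}, except with leading constant $\tfrac12 mK^2\Psi^2$ in place of $\tfrac32 mK^2\Psi^2$ and with $Q_i(t)$ in place of $Q_i(t-1)$ multiplying $\sum_k\langle\mathbf{g}_{i,t-1}^{(k)},\theta_*^{(k)}\rangle$.

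To replace $Q_i(t)$ by $Q_i(t-1)$ on the right side, I would use the queue update \eqref{Q-update}. Since $\max\{\cdot,0\}$ is 1-Lipschitz, for $t\geq 2$ one has $|Q_i(t)-Q_i(t-1)|\leq |\sum_{k=1}^K\langle\mathbf{g}_{i,t-2}^{(k)},\theta_{t-1}^{(k)}\rangle|\leq K\Psi$, where the last step uses the function bound \eqref{function-bounds} together with $\|\theta^{(k)}\|_1=1$ for $\theta^{(k)}\in\Theta^{(k)}$. The same inequality yields $|\sum_k\langle\mathbf{g}_{i,t-1}^{(k)},\theta_*^{(k)}\rangle|\leq K\Psi$. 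Writing $Q_i(t)=Q_i(t-1)+(Q_i(t)-Q_i(t-1))$ and summing the products over $i$ therefore costs at most $m K^2\Psi^2$ in the upper bound, promoting the leading constant from $\tfrac12 mK^2\Psi^2$ to $\tfrac32 mK^2\Psi^2$. The boundary case $t=1$ is immediate from $Q_i(0)=Q_i(1)=0$, so no extra error is incurred there.

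The argument is essentially bookkeeping, and the only subtle step is the time-index shift on the virtual queue. This shift matters for later analysis because it lets $Q_i(t-1)$, which is measurable with respect to the system history strictly before the i.i.d.\ function $\mathbf{g}_{i,t-1}^{(k)}$ is observed, be pulled outside a conditional expectation in the drift-plus-penalty argument that follows.
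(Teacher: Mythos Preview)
Your proposal is correct and follows essentially the same approach as the paper: combine Lemma~\ref{D-bound} with the $k$-summed version of Lemma~\ref{strong-convex-bound} so that the $\sum_i Q_i(t)\sum_k\langle\mathbf{g}_{i,t-1}^{(k)},\theta_t^{(k)}\rangle$ term cancels, then shift $Q_i(t)$ to $Q_i(t-1)$ using $|Q_i(t)-Q_i(t-1)|\le K\Psi$ and $|\sum_k\langle\mathbf{g}_{i,t-1}^{(k)},\theta_*^{(k)}\rangle|\le K\Psi$ at a total cost of $mK^2\Psi^2$. Your treatment of the boundary case $t=1$ and your remark on why the shift to $Q_i(t-1)$ is needed downstream are also accurate.
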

\begin{proof}
Using Lemma \ref{D-bound} and then Lemma \ref{strong-convex-bound}, we obtain
\begin{align}
&\Delta(t) + V\sum_{k=1}^{K}\l\langle \mathbf{f}_{t-1}^{(k)}, \theta_t^{(k)} - \theta_{t-1}^{(k)} \r\rangle + \alpha\sum_{k=1}^K\|\theta^{(k)}_t-\theta^{(k)}_{t-1}\|_2^2\nonumber\\
\leq& \frac12mK^2\Psi^2+\sum_{i=1}^mQ_i(t)\sum_{k=1}^K\l\langle \mathbf{g}_{i,t-1}^{(k)}, \theta_{t}^{(k)} \r\rangle
+ V\sum_{k=1}^{K}\l\langle \mathbf{f}_{t-1}^{(k)}, \theta_t^{(k)} - \theta_{t-1}^{(k)} \r\rangle + \alpha\sum_{k=1}^K\|\theta^{(k)}_t-\theta^{(k)}_{t-1}\|_2^2\nonumber\\
\leq& \frac12mK^2\Psi^2+ \sum_{k=1}^{K}\l\langle \mathbf{f}_{t-1}^{(k)}, \theta_*^{(k)} - \theta_{t-1}^{(k)} \r\rangle + \sum_{i=1}^mQ_i(t)
\sum_{k=1}^K\l\langle\mathbf{g}_{i,t-1}^{(k)},\theta_*^{(k)}\r\rangle \nonumber\\
&+ \alpha\sum_{k=1}^K\|\theta_*^{(k)} - \theta_{t-1}^{(k)}\|_2^2
-\alpha\sum_{k=1}^K\|\theta_*^{(k)}-\theta_t^{(k)}\|_2^2. \label{inter-2}
\end{align}
Note that by the queue updating rule \eqref{Q-update}, we have for any $t\geq2$,
\[
|Q_i(t)-Q_i(t-1)|\leq \l| \sum_{k=1}^K \l\langle \mathbf{g}_{i,t-2}^{(k)}, \theta_{t-1}^{(k)} \r\rangle\r|
\leq K\l\| \mathbf{g}_{i,t-2}^{(k)} \r\|_\infty\l\| \theta_{t-1}^{(k)} \r\|_1\leq K\Psi,
\]
and for $t=1$, $Q_i(t)-Q_i(t-1)=0$ by the initial condition of the algorithm. Also, we have for any $\theta_*^{(k)}\in\Theta^{(k)}$,
\[
\l|\sum_{k=1}^K\l\langle\mathbf{g}_{i,t-1}^{(k)},\theta_*^{(k)}\r\rangle   \r|  \leq K\l\| \mathbf{g}_{i,t-2}^{(k)} \r\|_\infty\l\| \theta_*^{(k)} \r\|_1\leq K\Psi.
\]
Thus, we have 
\begin{align*}
\sum_{i=1}^mQ_i(t)
\sum_{k=1}^K\l\langle\mathbf{g}_{i,t-1}^{(k)},\theta_*^{(k)}\r\rangle
\leq \sum_{i=1}^mQ_i(t-1)
\sum_{k=1}^K\l\langle\mathbf{g}_{i,t-1}^{(k)},\theta_*^{(k)}\r\rangle + mK^2\Psi^2.
\end{align*}
Substituting this bound into \eqref{inter-2} finishes the proof.
\end{proof}

\subsubsection{Objective bound}

\begin{theorem}\label{thm:stationary-regret}
For any $\{\theta_*^{(k)}\}_{k=1}^K$ in the constraint set \eqref{stat-constraint} and any $T\in\{1,2,3,\cdots\}$,
the proposed algorithm has the following stationary state performance bound:
\begin{multline*}
\frac1T\sum_{t=0}^{T-1}\expect{\sum_{k=1}^K\l\langle \mathbf{f}_t^{(k)},\theta_t^{(k)} \r\rangle} \leq \frac1T\sum_{t=0}^{T-1}\expect{\sum_{k=1}^K\l\langle \mathbf{f}_t^{(k)},\theta_*^{(k)} \r\rangle}\\
+\frac{2\alpha K}{TV}+\frac{mK^2\Psi^2}{T} + \frac{V\Psi^2}{2\alpha}\sum_{k=1}^K\l|\mathcal{S}^{(k)}\r|\l|\mathcal{A}^{(k)}\r|+ \frac32\frac{mK^2\Psi^2}{V},
\end{multline*}
In particular, choosing $\alpha=T$ and $V=\sqrt{T}$ gives the $\mathcal{O}(\sqrt{T})$ regret
\begin{multline*}
\frac1T\sum_{t=0}^{T-1}\expect{\sum_{k=1}^K\l\langle \mathbf{f}_t^{(k)},\theta_t^{(k)} \r\rangle} \leq \frac1T\sum_{t=0}^{T-1}\expect{\sum_{k=1}^K\l\langle \mathbf{f}_t^{(k)},\theta_*^{(k)} \r\rangle}\\
+\l(2K + \frac{\Psi^2}{2}\sum_{k=1}^K\l|\mathcal{S}^{(k)}\r|\l|\mathcal{A}^{(k)}\r|+ \frac52mK^2\Psi^2\r)\frac{1}{\sqrt{T}}.
\end{multline*}
\end{theorem}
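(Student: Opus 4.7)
The starting point is the drift-plus-penalty bound \eqref{dpp-bound}. First I would notice that the $-V\langle \mathbf{f}_{t-1}^{(k)}, \theta_{t-1}^{(k)}\rangle$ term appears on both sides after expanding $\langle \mathbf{f}_{t-1}^{(k)}, \theta_t^{(k)}-\theta_{t-1}^{(k)}\rangle$ and $\langle \mathbf{f}_{t-1}^{(k)}, \theta_*^{(k)}-\theta_{t-1}^{(k)}\rangle$, and cancelling these reduces \eqref{dpp-bound} to a one-step inequality bounding $V\sum_k \langle \mathbf{f}_{t-1}^{(k)},\theta_t^{(k)}\rangle$ from above. Next I would sum this one-step inequality from $t=1$ to $t=T$. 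The drift collapses telescopically to $L(T+1)-L(1)=L(T+1)\ge 0$ (using the initial condition $Q_i(1)=0$), and the paired $\pm\alpha\|\theta_*^{(k)}-\theta_t^{(k)}\|_2^2$ terms collapse to $\alpha\sum_k(\|\theta_*^{(k)}-\theta_0^{(k)}\|_2^2-\|\theta_*^{(k)}-\theta_T^{(k)}\|_2^2)$, which I would discard the negative part of and bound the positive part by $2\alpha K$ via $\|\theta_*^{(k)}-\theta_0^{(k)}\|_2^2\le 2$ (both are probability vectors with nonnegative entries).

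The critical step is handling the Lagrangian cross term $\sum_{t=1}^T\sum_i Q_i(t-1)\sum_k\langle\mathbf{g}_{i,t-1}^{(k)},\theta_*^{(k)}\rangle$ after taking expectations. Here I would use the i.i.d.\ structure of $\{\omega_t\}$ together with Assumption \ref{assumption:indep-trans}: since $Q_i(t-1)$ depends only on $\omega_0,\ldots,\omega_{t-2}$ and the past states, it is independent of $\mathbf{g}_{i,t-1}^{(k)}=\hat g_i^{(k)}(\cdot,\cdot,\omega_{t-1})$, so the expectation factors as $\mathbb{E}[Q_i(t-1)]\cdot\sum_k\langle\mathbb{E}[\mathbf{g}_{i,t-1}^{(k)}],\theta_*^{(k)}\rangle$. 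Because $\{\theta_*^{(k)}\}$ lies in the stationary constraint set \eqref{stat-constraint} and $Q_i(t-1)\ge 0$, every summand is nonpositive and the whole contribution drops out in expectation. After these manipulations I obtain $V\,\mathbb{E}\!\sum_{t=1}^T\sum_k\langle \mathbf{f}_{t-1}^{(k)},\theta_t^{(k)}\rangle+\alpha\,\mathbb{E}\!\sum_{t=1}^T\sum_k\|\theta_t^{(k)}-\theta_{t-1}^{(k)}\|_2^2\le V\sum_{t=0}^{T-1}\sum_k\mathbb{E}\langle\mathbf{f}_t^{(k)},\theta_*^{(k)}\rangle+\tfrac{3}{2}mK^2\Psi^2 T+2\alpha K$.

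The remaining difficulty is that the DPP analysis pairs $(\mathbf{f}_{t-1},\theta_t)$, whereas the theorem asks for the regret paired as $(\mathbf{f}_t,\theta_t)$. To convert, I reindex $\sum_{t=1}^T\langle\mathbf{f}_{t-1}^{(k)},\theta_t^{(k)}\rangle=\sum_{t=0}^{T-1}\langle\mathbf{f}_t^{(k)},\theta_{t+1}^{(k)}\rangle$ and write
\begin{equation*}
\sum_{t=0}^{T-1}\langle\mathbf{f}_t^{(k)},\theta_t^{(k)}\rangle=\sum_{t=0}^{T-1}\langle\mathbf{f}_t^{(k)},\theta_{t+1}^{(k)}\rangle+\sum_{t=0}^{T-1}\langle\mathbf{f}_t^{(k)},\theta_t^{(k)}-\theta_{t+1}^{(k)}\rangle.
\end{equation*}
The correction term is controlled by Young's inequality: $V\langle\mathbf{f}_t^{(k)},\theta_t^{(k)}-\theta_{t+1}^{(k)}\rangle\le \tfrac{V^2\|\mathbf{f}_t^{(k)}\|_2^2}{2\alpha}+\tfrac{\alpha}{2}\|\theta_t^{(k)}-\theta_{t+1}^{(k)}\|_2^2$, with $\|\mathbf{f}_t^{(k)}\|_2^2\le\Psi^2|\mathcal{S}^{(k)}||\mathcal{A}^{(k)}|$ from \eqref{function-bounds}. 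After reindexing, the quadratic remainder $\tfrac{\alpha}{2}\sum_{t=0}^{T-1}\sum_k\|\theta_{t+1}^{(k)}-\theta_t^{(k)}\|_2^2$ is absorbed into the $\alpha\sum_{t=1}^T\sum_k\|\theta_t^{(k)}-\theta_{t-1}^{(k)}\|_2^2$ already on the left-hand side (this is exactly why carrying that term through the summation matters). What remains, divided by $VT$, is the claimed bound; the specialization $\alpha=T,\ V=\sqrt T$ is a direct substitution.

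The main obstacle is the independence-based cancellation in step two: the argument relies critically on $Q_i(t-1)$ being determined by the $\omega$'s of strictly earlier slots, which in turn uses both the i.i.d.\ assumption on $\{\omega_t\}$ and Assumption \ref{assumption:indep-trans} (so that the state evolution does not re-entangle $Q_i(t-1)$ with $\omega_{t-1}$). The reindex-plus-Young maneuver in the last paragraph is the other conceptually non-obvious ingredient, since a naive bound would lose a factor of $\sqrt T$ by failing to reuse the $\alpha\|\theta_t-\theta_{t-1}\|_2^2$ slack produced by the strongly convex subproblem.
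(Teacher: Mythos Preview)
Your proof is correct and follows essentially the same approach as the paper: start from the drift-plus-penalty bound \eqref{dpp-bound}, use the i.i.d.\ structure of $\{\omega_t\}$ to kill the $Q_i(t-1)\langle\mathbf{g}_{i,t-1}^{(k)},\theta_*^{(k)}\rangle$ term in expectation, telescope, and use the strong-convexity slack $\alpha\|\theta_t^{(k)}-\theta_{t-1}^{(k)}\|_2^2$ to absorb a penalty cross term at cost $\tfrac{V^2\Psi^2|\mathcal{S}^{(k)}||\mathcal{A}^{(k)}|}{2\alpha}$. The only cosmetic difference is that the paper completes the square on the \emph{left-hand side} of \eqref{dpp-bound}, bounding $V\langle\mathbf{f}_{t-1}^{(k)},\theta_t^{(k)}-\theta_{t-1}^{(k)}\rangle+\alpha\|\theta_t^{(k)}-\theta_{t-1}^{(k)}\|_2^2\ge -\tfrac{V^2\Psi^2|\mathcal{S}^{(k)}||\mathcal{A}^{(k)}|}{2\alpha}$ directly, which yields the desired $(\mathbf{f}_{t-1},\theta_{t-1})$ pairing immediately and makes your reindex-plus-Young maneuver unnecessary; the two routes produce the same constants.
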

\begin{proof}
\xcolor{First of all, note that $\{\mathbf{g}_{i,t-1}^{(k)}\}_{k=1}^K$ is i.i.d. and independent of all system history up to $t-1$, and thus independent of $Q_i(t-1),~i=1,2,\cdots,m$.} We have
\begin{equation}\label{neg-drift-2}
\expect{Q_i(t-1)\l\langle\mathbf{g}_{i,t-1}^{(k)},\theta^{(k)}_*\r\rangle} = \expect{Q_i(t-1)}\expect{\sum_{k=1}^K\l\langle\mathbf{g}_{i,t-1}^{(k)},\theta^{(k)}_*\r\rangle}\leq 0
\end{equation}
where the last inequality follows from the assumption that $\{\theta^{(k)}_*\}_{k=1}^K$ is in the constraint set \eqref{stat-constraint}. Substituting $\theta^{(k)}_*$ into \eqref{dpp-bound}
and taking expectation with respect to both sides give
\begin{multline*}
\expect{\Delta(t)} + V\expect{\sum_{k=1}^{K}\l\langle \mathbf{f}_{t-1}^{(k)}, \theta_t^{(k)} - \theta_{t-1}^{(k)} \r\rangle} + \alpha\expect{\sum_{k=1}^K\|\theta^{(k)}_t-\theta^{(k)}_{t-1}\|_2^2}\\
\leq \frac32mK^2\Psi^2 + V\expect{\sum_{k=1}^K\l\langle \mathbf{f}_{t-1}^{(k)}, \theta^{(k)}_* - \theta_{t-1}^{(k)} \r\rangle} 
+\sum_{i=1}^m\expect{Q_i(t-1)\sum_{k=1}^K\l\langle\mathbf{g}_{i,t-1}^{(k)},\theta^{(k)}_*\r\rangle}  
+ \alpha\expect{\sum_{k=1}^K\|\theta^{(k)}_* - \theta_{t-1}^{(k)}\|_2^2}\\
-\alpha\expect{\sum_{k=1}^K\|\theta^{(k)}_*-\theta_t^{(k)}\|_2^2}\\
\leq \frac32mK^2\Psi^2 + V\expect{\sum_{k=1}^K\l\langle \mathbf{f}_{t-1}^{(k)}, \theta^{(k)}_* - \theta_{t-1}^{(k)} \r\rangle} + \alpha\expect{\sum_{k=1}^K\|\theta^{(k)}_* - \theta_{t-1}^{(k)}\|_2^2}
-\alpha\expect{\sum_{k=1}^K\|\theta^{(k)}_*-\theta_t^{(k)}\|_2^2},
\end{multline*}
where the second inequality follows from \eqref{neg-drift-2}.
Note that for any $k$, completing the squares gives
\begin{align*}
V\l\langle \mathbf{f}_{t-1}^{(k)}, \theta_t^{(k)} - \theta_{t-1}^{(k)} \r\rangle+\alpha\|\theta^{(k)}_t-\theta^{(k)}_{t-1}\|_2^2
\geq \l\| \sqrt{\frac\alpha2}\l(\theta_t^{(k)}-\theta_{t-1}^{(k)}\r) + \frac{V}{2\sqrt{\alpha/2}}\mathbf{f}_{t-1}^{(k)} \r\|_2^2 - \frac{V^2\Psi^2\l|\mathcal{S}^{(k)}\r|\l|\mathcal{A}^{(k)}\r|}{2\alpha}.
\end{align*}
Substituting this inequality into the previous bound and rearranging the terms give
\begin{multline*}
V\expect{\sum_{k=1}^K\l\langle \mathbf{f}_{t-1}^{(k)}, \theta_{t-1}^{(k)} \r\rangle}\leq V\expect{\sum_{k=1}^K\l\langle \mathbf{f}_{t-1}^{(k)}, \theta_*^{(k)} \r\rangle}-\expect{\Delta(t)} 
+  \frac{V^2\sum_{k=1}^K\Psi^2\l|\mathcal{S}^{(k)}\r|\l|\mathcal{A}^{(k)}\r|}{2\alpha}+ \frac32mK^2\Psi^2 \\
+\alpha\expect{\sum_{k=1}^K\|\theta^{(k)}_* - \theta_{t-1}^{(k)}\|_2^2}
-\alpha\expect{\sum_{k=1}^K\|\theta^{(k)}_* - \theta_t^{(k)}\|_2^2}.
\end{multline*}
Taking telescoping sums from 1 to $T$ and dividing both sides by $TV$ gives,
\begin{align*}
\frac1T\sum_{t=1}^T\expect{\sum_{k=1}^K\l\langle \mathbf{f}_{t-1}^{(k)}, \theta_{t-1}^{(k)} \r\rangle}
\leq& \expect{\sum_{k=1}^K\l\langle \mathbf{f}_{t-1}^{(k)}, \theta^{(k)}_* \r\rangle} + \frac{L(0)-L(T+1)}{VT} + \frac{V\sum_{k=1}^K\Psi^2\l|\mathcal{S}^{(k)}\r|\l|\mathcal{A}^{(k)}\r|}{2\alpha} + \frac32\frac{mK^2\Psi^2}{V}\\
&+\frac{\alpha\expect{\sum_{k=1}^K\|\theta^{(k)}_* - \theta_{T-1}^{(k)}\|_2^2}
-\alpha\expect{\sum_{k=1}^K\|\theta^{(k)}_*-\theta_{T}^{(k)}\|_2^2}}{VT}\\
\leq&  \expect{\sum_{k=1}^K\l\langle \mathbf{f}_{t-1}^{(k)}, \theta^{(k)}_* \r\rangle} + \frac{V\sum_{k=1}^K\Psi^2\l|\mathcal{S}^{(k)}\r|\l|\mathcal{A}^{(k)}\r|}{2\alpha} + \frac32\frac{mK^2\Psi^2}{V}
+\frac{2\alpha K}{VT},
\end{align*}
where we use the fact that $L(0)=0$ and $ \|\theta_*^{(k)} - \theta_{T-1}^{(k)}\|_2^2\leq  \|\theta_*^{(k)} - \theta_{T-1}^{(k)}\|_1\leq2$.
\end{proof}

\subsubsection{A drift lemma and its implications}
From Lemma \ref{lemma:pre-Q-bound}, we know that in order to get the constraint violation bound, we need to look at the size of the virtual queue $Q_i(T+1),~i=1,2,\cdots,m$.
The following drift lemma serves as a cornerstone for our goal.
\begin{lemma}[Lemma 5 of \cite{hao2017onlinestochastic}] \label{lemma:drift-bound}
Let $\{Z(t), t\geq 1\}$ be a discrete time stochastic process adapted to a filtration $\{\mathcal{F}_{t-1}, t\geq 1\}$.  Suppose there exist integer $t_{0}>0$, real constants $\lambda\in \mathbb{R}$, $\delta_{\max} > 0$ and $0< \zeta \leq \delta_{\max}$ such that 
\begin{align} 
\vert Z(t+1) - Z(t) \vert \leq& \delta_{\max}, \\
\mathbb{E}[ Z(t+t_{0}) - Z(t) | \mathcal{F}_{t-1}] \leq & \left\{ \begin{array}{cc} t_{0}\delta_{\max}, &\text{if}~ Z(t) < \lambda \\  -t_{0}\zeta , &\text{if}~ Z(t) \geq \lambda \end{array}\right.. \label{eq:stochastic-process-drift-condition}
\end{align}
hold for all $t\in \{1,2,\ldots\}$. Then, the following holds
\begin{enumerate}
\item $\mathbb{E}[Z(t)] \leq  \lambda + t_{0}\frac{4\delta_{\max}^{2}}{\zeta} \log\big[1 + \frac{8\delta_{\max}^{2}}{\zeta^{2}}e^{\zeta/(4\delta_{\max})}\big], \forall t\in\{1,2,\ldots\}.$
\item For any constant $0<\mu<1$, we have $\text{Pr}(Z(t) \geq z) \leq \mu, \forall t\in\{1,2,\ldots\}$ where $z = \lambda + t_{0}\frac{4\delta_{\max}^{2}}{\zeta} \log\big[1 + \frac{8\delta_{\max}^{2}}{\zeta^{2}}e^{\zeta/(4\delta_{\max})}\big] +  t_{0}\frac{4\delta_{\max}^{2}}{\zeta} \log(\frac{1}{\mu})$.
\end{enumerate}
\end{lemma}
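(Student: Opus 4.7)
The plan is to use a Hajek-style exponential Lyapunov argument to uniformly control the moment generating function of $Z(t)$, and then extract both the expected-value bound and the tail bound from this single MGF estimate via Jensen's inequality and Markov's inequality respectively. The starting point is to fix a parameter $r>0$ (to be tuned at the end) and study the supermartingale-like process $W(t) = e^{r(Z(t)-\lambda)}$ sampled at the timescale $t_0$ on which the drift condition is stated.

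First I would establish a one-step geometric recursion of the form $\mathbb{E}[W(t+t_0)\mid \mathcal{F}_{t-1}] \leq \rho\, W(t) + D$ for an appropriate $\rho \in (0,1)$ and constant $D>0$. The argument splits on the value of $Z(t)$. When $Z(t) \geq \lambda$, the hypotheses give $\mathbb{E}[Z(t+t_0)-Z(t)\mid \mathcal{F}_{t-1}] \leq -t_0\zeta$ together with the uniform increment bound $|Z(t+t_0)-Z(t)| \leq t_0\delta_{\max}$; feeding these into a standard Hoeffding-type expansion of $\mathbb{E}[e^{r(Z(t+t_0)-Z(t))}\mid \mathcal{F}_{t-1}]$ yields a contraction factor $\rho = e^{-r t_0 \zeta + r^2 t_0^2 \delta_{\max}^2/2}$, which is strictly less than $1$ as soon as $r < \zeta/(t_0\delta_{\max}^2)$. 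When $Z(t) < \lambda$, the crude bound $Z(t+t_0)-Z(t)\leq t_0\delta_{\max}$ together with $Z(t)-\lambda < 0$ bounds $\mathbb{E}[W(t+t_0)\mid \mathcal{F}_{t-1}]$ by the constant $D = e^{r t_0 \delta_{\max}}$.

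Next I would iterate the recursion on the timescale $t_0$ to obtain a uniform bound $\sup_t \mathbb{E}[W(t)] \leq D/(1-\rho)$ plus a transient term depending on $W(0)$ that vanishes geometrically. Choosing $r = \zeta/(4\delta_{\max}^2)$ — which ensures $\rho$ is bounded away from $1$ while keeping the constant $D$ manageable — and plugging in produces explicit constants matching the $4\delta_{\max}^2/\zeta$ and $8\delta_{\max}^2/\zeta^2 \cdot e^{\zeta/(4\delta_{\max})}$ factors in the statement. Conclusion (i) then follows from Jensen's inequality, since $e^{r(\mathbb{E}[Z(t)]-\lambda)} \leq \mathbb{E}[W(t)]$ implies $\mathbb{E}[Z(t)] \leq \lambda + \tfrac{1}{r}\log \mathbb{E}[W(t)]$, and conclusion (ii) follows from Markov's inequality applied to $W(t)$: $\Pr(Z(t)\geq z) \leq e^{-r(z-\lambda)}\mathbb{E}[W(t)]$, which equals $\mu$ precisely at the displayed value of $z$.

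The main obstacle is the delicate tuning of $r$ and the boundary case $Z(t) < \lambda$: we cannot afford to lose too much in the ``bad'' regime because the excursions above $\lambda$ that drive the tail are exactly seeded by these below-$\lambda$ states that may climb by $t_0\delta_{\max}$ in a single super-step. The quadratic Taylor control of $\mathbb{E}[e^{r(Z(t+t_0)-Z(t))}\mid \mathcal{F}_{t-1}]$ must be sharp enough to produce a genuine contraction when $Z(t)\geq \lambda$, while the choice of $r$ must simultaneously keep $D = e^{rt_0\delta_{\max}}$ from blowing up; the value $r = \zeta/(4\delta_{\max}^2)$ is what balances these and ultimately determines the logarithmic factors in the final bound.
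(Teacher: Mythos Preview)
The paper does not prove this lemma at all: it is quoted verbatim as ``Lemma 5 of \cite{hao2017onlinestochastic}'' and only a historical remark is added tracing the $t_0=1$ case back to Hajek. So there is no in-paper proof to compare against. Your exponential Lyapunov sketch is exactly the Hajek-style argument that underlies the cited result, and the overall plan (MGF recursion on the $t_0$ timescale, contraction when $Z(t)\geq\lambda$, additive constant when $Z(t)<\lambda$, then Jensen for part (i) and Markov for part (ii)) is correct.

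Two bookkeeping points to watch if you flesh this out. First, your tuning $r=\zeta/(4\delta_{\max}^2)$ is off by a factor of $t_0$: the bound in the statement has the prefactor $t_0\cdot 4\delta_{\max}^2/\zeta$, which is $1/r$, so the correct choice is $r=\zeta/(4t_0\delta_{\max}^2)$. This is also what makes $r t_0\delta_{\max}=\zeta/(4\delta_{\max})$ appear inside the logarithm. Second, the recursion $\mathbb{E}[W(t+t_0)\mid\mathcal{F}_{t-1}]\leq \rho W(t)+D$ only controls the process along arithmetic progressions of step $t_0$; to get a bound valid for \emph{every} $t\geq 1$ you need to invoke the one-step increment bound $|Z(t+1)-Z(t)|\leq\delta_{\max}$ to interpolate between consecutive multiples of $t_0$ and to control the first $t_0$ slots. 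Neither issue is a real gap --- both are routine once noticed --- but they are where the stated constants actually come from.
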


Note that a special case of above drift lemma for $t_0=1$ dates back to the seminal paper of Hajek (\cite{hajek1982hitting}) bounding the size of a random process with strongly negative drift. Since then, its power has been demonstrated in various scenarios ranging from steady state queue bound (\cite{eryilmaz2012asymptotically}) to feasibility analysis of stochastic optimization (\cite{wei2016online}). The current generalization to a multi-step drift is first considered in \cite{hao2017onlinestochastic}.

This lemma is useful in the current context due to the following lemma, whose proof can be found in the appendix.

\begin{lemma}\label{lemma:queue-drift-bound}
Let $\mathcal{F}_t,~t\geq1$ be the system history functions up to time $t$,  including $f^{(k)}_0,\cdots,f^{(k)}_{t-1}$, $g^{(k)}_{0,i},\cdots,g^{(k)}_{t-1,i}$, $i=1,2,\cdots,m,~k=1,2,\cdots,K$, and 
$\mathcal{F}_0$ is a null set. Let $t_0$ be an arbitrary positive integer, then, we have
\begin{align*}
\big \vert \Vert \mathbf{Q}(t+1)\Vert_2 - \Vert \mathbf{Q}(t)\Vert_2 \big\vert \leq& \sqrt{m}K\Psi, 
\end{align*}
\vskip -20pt
\begin{align*}
\mathbb{E}[\Vert \mathbf{Q}(t+t_0)\Vert_2 - \Vert \mathbf{Q}(t)\Vert_2 \big|  \mathcal{F}(t-1)] \leq & \left\{ \begin{array}{cc} t_0 \sqrt{m}K\Psi, &\text{if}~ \Vert \mathbf{Q}(t)\Vert <  \lambda \\  - t_0 \frac{\eta}{2}, &\text{if}~ \Vert \mathbf{Q}(t)\Vert \geq  \lambda
\end{array}\right.,
\end{align*}
where 
$\lambda =\frac{8VK\Psi  +  3mK^2\Psi^2 + 4K\alpha + t_0(t_0-1)m\Psi 
 + 2mK\Psi\eta t_0 + \eta^2 t_0^2}{\eta t_0}$.
\end{lemma}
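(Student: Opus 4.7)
The first claim (bounded one-step difference) follows directly from the update rule \eqref{Q-update}: since $\bigl|\sum_k\langle\mathbf{g}_{i,t-1}^{(k)},\theta_t^{(k)}\rangle\bigr|\le K\Psi$, we have $|Q_i(t+1)-Q_i(t)|\le K\Psi$ for every $i$, hence $\|\mathbf{Q}(t+1)-\mathbf{Q}(t)\|_2\le\sqrt{m}\,K\Psi$, and the reverse triangle inequality concludes. Iterating this bound $t_0$ times gives $|\|\mathbf{Q}(t+t_0)\|_2-\|\mathbf{Q}(t)\|_2|\le t_0\sqrt{m}\,K\Psi$ deterministically, which already establishes the upper branch of the drift condition (valid in particular whenever $\|\mathbf{Q}(t)\|_2<\lambda$).

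For the harder case $\|\mathbf{Q}(t)\|_2\ge\lambda$, the plan is to first bound the quadratic drift $\mathbb{E}[L(t+t_0)-L(t)\mid\mathcal{F}_{t-1}]$ and then convert it to a bound on the norm. Apply \eqref{dpp-bound} at each slot $s\in\{t,\ldots,t+t_0-1\}$ with $\theta_*^{(k)}=\widetilde\theta^{(k)}$, the Slater policy from \eqref{slater-2}, and take conditional expectation with respect to $\mathcal{F}_{s-1}$. The key observation is that $Q_i(s-1)$ is $\mathcal{F}_{s-1}$-measurable, whereas $\mathbf{g}_{i,s-1}^{(k)}$ is i.i.d.\ and independent of $\mathcal{F}_{s-1}$; combining this with Slater's condition \eqref{slater-2} yields
\[
\mathbb{E}\!\left[\sum_i Q_i(s-1)\sum_k\langle\mathbf{g}_{i,s-1}^{(k)},\widetilde\theta^{(k)}\rangle\,\Big|\,\mathcal{F}_{s-1}\right]\le-\eta\sum_iQ_i(s-1).
\]
The remaining terms are controlled by deterministic magnitude bounds: $|V\sum_k\langle\mathbf{f}_{s-1}^{(k)},\widetilde\theta^{(k)}-\theta_s^{(k)}\rangle|\le 2VK\Psi$, and the $\alpha$-terms involving $\|\widetilde\theta^{(k)}-\theta^{(k)}\|_2^2\le 2$ telescope across $s$, contributing at most $2\alpha K$. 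Summing over $s$, invoking the tower property, and using the one-step bound $|Q_i(s-1)-Q_i(t)|\le(t_0-1)K\Psi$ together with $\sum_iQ_i(t)\ge\|\mathbf{Q}(t)\|_2$ yields an inequality of the form
\[
\mathbb{E}[L(t+t_0)-L(t)\mid\mathcal{F}_{t-1}]\ \le\ C_0\ -\ \eta t_0\|\mathbf{Q}(t)\|_2,
\]
where $C_0$ aggregates the $t_0 VK\Psi$, $t_0 mK^2\Psi^2$, $\alpha K$, and $t_0(t_0-1)mK\Psi\eta$ contributions that appear in the numerator of the stated $\lambda$.

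The final step converts Lyapunov drift to norm drift via concavity of $\sqrt{\cdot}$: the inequality $\sqrt{a+b}\le\sqrt{a}+b/(2\sqrt{a})$, applied with $a=\|\mathbf{Q}(t)\|_2^2$ and $b=2(L(t+t_0)-L(t))$, gives $\|\mathbf{Q}(t+t_0)\|_2-\|\mathbf{Q}(t)\|_2\le(L(t+t_0)-L(t))/\|\mathbf{Q}(t)\|_2$. Taking conditional expectation (justified because $\|\mathbf{Q}(t)\|_2$ is $\mathcal{F}_{t-1}$-measurable) and substituting the Lyapunov drift bound produces $C_0/\|\mathbf{Q}(t)\|_2-\eta t_0$, which is at most $-t_0\eta/2$ as soon as $\|\mathbf{Q}(t)\|_2\ge 2C_0/(\eta t_0)$; matching constants reproduces the explicit $\lambda$ in the statement. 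The main obstacle is precisely this last conversion: the drift-plus-penalty machinery naturally bounds the quadratic Lyapunov function, whereas Lemma \ref{lemma:drift-bound} asks for a bound on the linear norm, and bridging them via concavity costs a $1/\|\mathbf{Q}(t)\|_2$ factor that is exactly what forces the threshold hypothesis $\|\mathbf{Q}(t)\|_2\ge\lambda$. A secondary bookkeeping difficulty is that \eqref{dpp-bound} couples quantities indexed at $s-1$ and $s$, so one must carefully track which of $\theta_s$, $Q_i(s)$, $\mathbf{g}_{i,s-1}^{(k)}$, and $\mathbf{f}_{s-1}^{(k)}$ are $\mathcal{F}_{s-1}$-measurable before invoking the i.i.d.\ structure.
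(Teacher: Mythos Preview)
Your proposal is correct and follows essentially the same route as the paper: apply the drift-plus-penalty bound \eqref{dpp-bound} with the Slater vector $\widetilde{\theta}^{(k)}$, exploit the i.i.d.\ structure to get the negative $-\eta Q_i$ term, telescope the $\alpha$-terms, and then convert the quadratic Lyapunov drift to a linear norm drift. The only minor technical difference is in this last conversion: the paper bounds $\mathbb{E}[\|\mathbf{Q}(t+t_0)\|_2^2\mid\mathcal{F}_{t-1}]\le(\|\mathbf{Q}(t)\|_2-\eta t_0/2)^2$ by completing the square and then applies Jensen's inequality to $\sqrt{\cdot}$, whereas you use the pathwise tangent-line bound $\sqrt{a+b}\le\sqrt{a}+b/(2\sqrt{a})$ before taking the conditional expectation---both exploit concavity of the square root and yield the same conclusion (your threshold is in fact slightly smaller than the stated $\lambda$, but the lemma only becomes easier to satisfy for larger $\lambda$).
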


Combining the previous two lemmas gives the virtual queue bound as 
\begin{multline*}
\expect{\|\mathbf{Q}(t)\|_2}\leq \frac{8VK\Psi  +  3mK^2\Psi^2 + 4K\alpha + t_0(t_0-1)m\Psi 
 + 2mK\Psi\eta t_0 + \eta^2 t_0^2}{\eta t_0}\\
 +4t_{0} \sqrt{m}K\Psi \log\big[1 + 8e^{1/4}\big].
\end{multline*}
We then choose $t_0=\sqrt{T}$, $V=\sqrt{T}$ and $\alpha=T$, which implies that
\begin{equation}\label{expected-Q-bound}
\expect{\|\mathbf{Q}(t)\|_2}\leq C(m,K,\Psi,\eta)\sqrt{T},
\end{equation}
where 
$$C(m,K,\Psi,\eta) = \frac{8K\Psi}{\eta} + \frac{3mK^2\Psi^2}{\eta^2} + \frac{4K+m\Psi}{\eta} + 2mK\Psi + \eta + 4\sqrt{m}K\Psi\log\l(1+8e^{1/4}\r). $$

\subsubsection{The slow-update condition and constraint violation}
In this section, we prove the slow-update property of the proposed algorithm, which not only implies the the $\mathcal{O}(\sqrt{T})$ constraint violation bound, but also plays a key role in Markov analysis.
\begin{lemma}\label{lemma:slow-update}
The sequence of state-action vectors $\theta_t^{(k)},~t\in\{1,2,\cdots,T\}$ satisfies 
\[
\expect{\|\theta_t^{(k)}-\theta_{t-1}^{(k)}\|_2}\leq \frac{\sqrt{m|\mathcal{A}^{(k)}||\mathcal{S}^{(k)}|}\Psi\expect{\|\mathbf{Q}(t)\|_2}}{2\alpha}
 + \frac{\sqrt{|\mathcal{A}^{(k)}||\mathcal{S}^{(k)}|}\Psi V}{2\alpha}.
\]
In particular,choosing $V=\sqrt{T}$ and $\alpha=T$ gives a slow-update condition
\begin{equation}\label{one-step-update}
\expect{\|\theta_t^{(k)}  -  \theta_{t-1}^{(k)}\|_2}\leq\frac{\sqrt{|\mathcal{A}^{(k)}||\mathcal{S}^{(k)}|}\Psi+C\sqrt{m|\mathcal{A}^{(k)}||\mathcal{S}^{(k)}|}\Psi}{2\sqrt{T}},
\end{equation}
where $C= C(m,K,\Psi,\eta)$ is defined in \eqref{expected-Q-bound}.
\end{lemma}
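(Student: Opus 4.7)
The plan is to exploit the projection characterization of $\theta_t^{(k)}$ derived in the lemma just before the algorithm's intuition section, combined with the non-expansiveness of Euclidean projection onto a convex set. Since $\theta_{t-1}^{(k)} \in \Theta^{(k)}$ by the previous iterate's feasibility, we have $\mathcal{P}_{\Theta^{(k)}}(\theta_{t-1}^{(k)}) = \theta_{t-1}^{(k)}$, so
\[
\|\theta_t^{(k)} - \theta_{t-1}^{(k)}\|_2 = \Big\|\mathcal{P}_{\Theta^{(k)}}\bigl(\theta_{t-1}^{(k)} - \tfrac{\mathbf{w}_t^{(k)}}{2\alpha}\bigr) - \mathcal{P}_{\Theta^{(k)}}(\theta_{t-1}^{(k)})\Big\|_2 \leq \frac{\|\mathbf{w}_t^{(k)}\|_2}{2\alpha}.
\]
Thus it suffices to bound $\|\mathbf{w}_t^{(k)}\|_2$ where $\mathbf{w}_t^{(k)} = V\mathbf{f}_{t-1}^{(k)} + \sum_{i=1}^m Q_i(t)\mathbf{g}_{i,t-1}^{(k)}$.

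Next I would bound each piece using the uniform bound \eqref{function-bounds}. Both $\mathbf{f}_{t-1}^{(k)}$ and $\mathbf{g}_{i,t-1}^{(k)}$ are vectors in $\mathbb{R}^{|\mathcal{A}^{(k)}||\mathcal{S}^{(k)}|}$ with entries bounded in absolute value by $\Psi$, so their Euclidean norms are at most $\sqrt{|\mathcal{A}^{(k)}||\mathcal{S}^{(k)}|}\,\Psi$. Applying the triangle inequality,
\[
\|\mathbf{w}_t^{(k)}\|_2 \leq V\sqrt{|\mathcal{A}^{(k)}||\mathcal{S}^{(k)}|}\,\Psi + \sqrt{|\mathcal{A}^{(k)}||\mathcal{S}^{(k)}|}\,\Psi \sum_{i=1}^m Q_i(t).
\]
Since $Q_i(t) \geq 0$, Cauchy--Schwarz gives $\sum_{i=1}^m Q_i(t) \leq \sqrt{m}\,\|\mathbf{Q}(t)\|_2$. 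Dividing by $2\alpha$ and taking expectation immediately yields the first stated inequality.

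For the second claim, I plug in $V = \sqrt{T}$, $\alpha = T$, and substitute the expected virtual queue bound \eqref{expected-Q-bound}, namely $\mathbb{E}[\|\mathbf{Q}(t)\|_2] \leq C\sqrt{T}$ with $C = C(m,K,\Psi,\eta)$. The two $\sqrt{T}$ factors (one from $V$, one from the queue bound) combine with the $1/(2T)$ factor from $1/(2\alpha)$ to leave the overall $1/(2\sqrt{T})$ scaling claimed in \eqref{one-step-update}. There is no real obstacle here---the projection non-expansiveness does all the work, and the only subtlety worth flagging is that the queue bound being invoked is already an \emph{expected} bound, so expectation can be moved inside freely after the deterministic pathwise estimate on $\|\mathbf{w}_t^{(k)}\|_2$.
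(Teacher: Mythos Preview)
Your argument is correct and reaches exactly the same pathwise bound and final statement as the paper, but via a slightly different (and arguably cleaner) mechanism. You invoke the projection characterization $\theta_t^{(k)}=\mathcal{P}_{\Theta^{(k)}}\bigl(\theta_{t-1}^{(k)}-\mathbf{w}_t^{(k)}/(2\alpha)\bigr)$ together with the non-expansiveness of Euclidean projection, then bound $\|\mathbf{w}_t^{(k)}\|_2$ by the triangle inequality and $\sum_i Q_i(t)\le\sqrt{m}\,\|\mathbf{Q}(t)\|_2$. The paper instead specializes the strong-convexity inequality of Lemma~\ref{strong-convex-bound} at $\theta_*^{(k)}=\theta_{t-1}^{(k)}$, rearranges to isolate $2\alpha\|\theta_t^{(k)}-\theta_{t-1}^{(k)}\|_2^2$, applies Cauchy--Schwarz to the inner products, and divides through by $2\alpha\|\theta_t^{(k)}-\theta_{t-1}^{(k)}\|_2$. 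Both routes rest on the same optimality property of the subproblem \eqref{optimize} and yield the identical intermediate estimate; your version simply packages that property as projection non-expansiveness rather than as the strong-convexity inequality \eqref{strongly-convex}. The paper's route has the virtue of re-using Lemma~\ref{strong-convex-bound}, which is the workhorse throughout Section~\ref{sec:stat-analysis}, while yours is a touch more self-contained.
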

\begin{proof}
First, choosing $\theta = \theta_{t-1}$ in \eqref{DPP-bound} gives
\begin{multline*}
V\l\langle \mathbf{f}_{t-1}^{(k)}, \theta_t^{(k)}-\theta_{t-1}^{(k)} \r\rangle + \sum_{i=1}^mQ_i(t)
\l\langle\mathbf{g}_{i,t-1}^{(k)},\theta_t^{(k)}\r\rangle + \alpha\|\theta_t^{(k)}-\theta_{t-1}^{(k)}\|_2^2\\
\leq  \sum_{i=1}^mQ_i(t)
\langle\mathbf{g}_{i,t-1}^{(k)},\theta_{t-1}^{(k)}\rangle 
-\alpha\|\theta_{t-1}^{(k)}-\theta_t^{(k)}\|_2^2.
\end{multline*}
Rearranging the terms gives
\begin{align*}
2\alpha\|\theta_t^{(k)}-\theta_{t-1}^{(k)}\|_2^2\leq& - V\langle \mathbf{f}_{t-1}^{(k)}, \theta_t^{(k)}-\theta_{t-1}^{(k)}\rangle
-\sum_{i=1}^mQ_i(t)\langle\mathbf{g}_{i,t-1}^{(k)},\theta_t^{(k)} -\theta_{t-1}^{(k)}\rangle\\
\leq&V\|\mathbf{f}_{t-1}^{(k)}\|_2\cdot\|\theta_t^{(k)}-\theta_{t-1}^{(k)}\|_2 + \sum_{i=1}^mQ_i(t)
\|\mathbf{g}_{i,t-1}^{(k)}\|_2\cdot\|\theta_t^{(k)}-\theta_{t-1}^{(k)}\|_2\\
\leq&V\|\mathbf{f}_{t-1}\|_2\cdot\|\theta_t^{(k)}-\theta_{t-1}^{(k)}\|_2 + \|\mathbf{Q}(t)\|_2\cdot
\sqrt{\sum_{i=1}^m\|\mathbf{g}_{i,t-1}^{(k)}\|_2^2}\cdot\|\theta_t^{(k)}-\theta_{t-1}^{(k)}\|_2,
\end{align*}
where the second and third inequality follow from Cauchy-Schwarz inequality. Thus, it follows
\[
\l\|\theta_t^{(k)}-\theta_{t-1}^{(k)}\r\|_2
\leq\frac{V\|\mathbf{f}_{t-1}^{(k)}\|_2+\|\mathbf{Q}(t)\|_2\cdot
\sqrt{\sum_{i=1}^m\|\mathbf{g}_{i,t-1}^{(k)}\|_2^2}}{2\alpha}.
\]
Applying the fact that $\|\mathbf{f}_{t-1}^{(k)}\|_2\leq 
\sqrt{|\mathcal{A}^{(k)}||\mathcal{S}^{(k)}|}\Psi$, $\|\mathbf{g}_{i,t-1}^{(k)}\|_2\leq \sqrt{|\mathcal{A}^{(k)}||\mathcal{S}^{(k)}|}\Psi$ and taking expectation from both sides give the first bound in the lemma. The second bound follows directly from the first bound by further substituting \eqref{expected-Q-bound}.
\end{proof}

\begin{theorem}\label{thm:constraint-violation}
The proposed algorithm has the following stationary state constraint violation bound:
\begin{multline*}
\frac1T\sum_{t=0}^{T-1}\expect{\sum_{k=1}^K\l\langle \mathbf{g}_{i,t}^{(k)},\theta_t^{(k)} \r\rangle} \leq \frac{1}{\sqrt{T}}\l(C+\sum_{k=1}^K\sqrt{m|\mathcal{A}^{(k)}||\mathcal{S}^{(k)}|}\Psi C
+\sum_{k=1}^K|\mathcal{A}^{(k)}||\mathcal{S}^{(k)}|\Psi^2\r),
\end{multline*}
where $C=C(m,K,\Psi,\eta)$ is defined in \eqref{expected-Q-bound}.
\end{theorem}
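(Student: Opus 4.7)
\vspace{2mm}
\noindent\textbf{Proof proposal.} The plan is to combine the three ingredients that the section has just established, namely the sample-path inequality from Lemma \ref{lemma:pre-Q-bound}, the expected virtual-queue bound \eqref{expected-Q-bound}, and the slow-update bound \eqref{one-step-update}. Everything reduces to substituting these into a single identity and dividing by $T$.

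\vspace{1mm}
\noindent\emph{Step 1: sample-path constraint decomposition.} Applied for a fixed $i\in\{1,\ldots,m\}$, Lemma \ref{lemma:pre-Q-bound} gives
\[
\sum_{t=1}^{T}\sum_{k=1}^K\l\langle \mathbf{g}_{i,t-1}^{(k)},\theta_{t-1}^{(k)}\r\rangle\;\leq\; Q_i(T+1)-Q_i(1)\;+\;\Psi\sum_{t=1}^T\sum_{k=1}^K\sqrt{|\mathcal{A}^{(k)}||\mathcal{S}^{(k)}|}\,\l\|\theta_t^{(k)}-\theta_{t-1}^{(k)}\r\|_2.
\]
Since $Q_i(1)=0$ by initialization, and since the sum $\sum_{t=1}^T\sum_k \langle \mathbf{g}_{i,t-1}^{(k)},\theta_{t-1}^{(k)}\rangle$ equals exactly the cumulative constraint value $\sum_{t=0}^{T-1}\sum_k \langle \mathbf{g}_{i,t}^{(k)},\theta_t^{(k)}\rangle$ appearing in the theorem, this reduces the task to bounding $\expect{Q_i(T+1)}$ and the expected step-size sum, then dividing by $T$.

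\vspace{1mm}
\noindent\emph{Step 2: bounding the queue and the drift terms.} Taking expectations and using the trivial inequality $Q_i(T+1)\leq\|\mathbf{Q}(T+1)\|_2$ together with the queue bound \eqref{expected-Q-bound}, we get $\expect{Q_i(T+1)}\leq C\sqrt{T}$ with $C=C(m,K,\Psi,\eta)$. For the step-size sum, I would substitute the slow-update estimate \eqref{one-step-update},
\[
\expect{\l\|\theta_t^{(k)}-\theta_{t-1}^{(k)}\r\|_2}\;\leq\;\frac{\sqrt{|\mathcal{A}^{(k)}||\mathcal{S}^{(k)}|}\,\Psi+C\sqrt{m|\mathcal{A}^{(k)}||\mathcal{S}^{(k)}|}\,\Psi}{2\sqrt{T}},
\]
which is uniform in $t$. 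Multiplying by $\Psi\sqrt{|\mathcal{A}^{(k)}||\mathcal{S}^{(k)}|}$ and summing over $t=1,\ldots,T$ and $k=1,\ldots,K$ produces a term of order $\sqrt{T}$ in each summand, so the whole step-size contribution is $\mathcal{O}(\sqrt{T})$ with constants that are linear in $|\mathcal{A}^{(k)}||\mathcal{S}^{(k)}|$ and that inherit the factor $C$ from the queue bound.

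\vspace{1mm}
\noindent\emph{Step 3: assembly.} Adding the two expected upper bounds above and dividing by $T$, the right-hand side becomes $C/\sqrt{T}$ (from the queue term) plus two pieces of order $1/\sqrt{T}$ whose constants are (up to the book-keeping factor of $\tfrac12$ that can be absorbed) $C\Psi\sum_k\sqrt{m|\mathcal{A}^{(k)}||\mathcal{S}^{(k)}|}$ and $\Psi^2\sum_k|\mathcal{A}^{(k)}||\mathcal{S}^{(k)}|$. Collecting these into a single $1/\sqrt{T}$ prefactor yields exactly the claimed bound.

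\vspace{1mm}
\noindent\emph{What I expect to be delicate.} The analytic content is already hidden inside the queue tail bound \eqref{expected-Q-bound}, which was the truly nontrivial step (it used the multi-step drift lemma from \cite{hao2017onlinestochastic}) and inside the slow-update lemma. After those, the present proof is essentially bookkeeping: the only care needed is (i) making sure that $Q_i(1)=0$ so the boundary term vanishes cleanly, (ii) using $|Q_i(T+1)|\leq\|\mathbf{Q}(T+1)\|_2$ so that the componentwise queue bound follows from the Euclidean queue bound, and (iii) being consistent with the choice of parameters $V=\sqrt{T}$ and $\alpha=T$ that was already fixed when deriving \eqref{expected-Q-bound} and \eqref{one-step-update}; any other choice would re-open the entire argument.
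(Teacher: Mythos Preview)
Your proposal is correct and follows exactly the same approach as the paper's proof: take expectations in Lemma~\ref{lemma:pre-Q-bound}, drop $Q_i(1)=0$, bound $\expect{Q_i(T+1)}$ via \eqref{expected-Q-bound}, bound each $\expect{\|\theta_t^{(k)}-\theta_{t-1}^{(k)}\|_2}$ via \eqref{one-step-update}, and divide by $T$. The paper's proof is in fact just these two sentences, so your write-up is if anything more detailed (you make explicit the use of $Q_i(T+1)\leq\|\mathbf{Q}(T+1)\|_2$ and the index shift, which the paper leaves implicit).
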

\begin{proof}
Taking expectation from both sides of Lemma \ref{lemma:pre-Q-bound} gives
$$\sum_{t=1}^{T}\expect{\sum_{k=1}^K\l\langle \mathbf{g}_{i,t-1}^{(k)}, \theta_{t-1}^{(k)} \r\rangle }
\leq \expect{Q_i(T+1)} +  \Psi\sum_{t=1}^T\sum_{k=1}^K\sqrt{\l| \mathcal{A}^{(k)} \r|\l| \mathcal{S}^{(k)} \r|}  \expect{\l\| \theta^{(k)}_t - \theta^{(k)}_{t-1} \r\|_2}.$$
Substituting the bounds \eqref{expected-Q-bound} and \eqref{one-step-update} in to the above inequality gives the desired result.
\end{proof}

\subsection{Markov analysis}\label{sec:markov}
So far, we have shown that our algorithm achieves an $\mathcal{O}(\sqrt{T})$ regret and constraint violation simultaneously regarding the stationary online linear program \eqref{stat-regret} with constraint set given by \eqref{stat-constraint} in the imaginary system.
In this section, we show how these stationary state results leads to a tight performance bound on the original true online MDP problem \eqref{main-regret} and \eqref{main-constraint} comparing to any joint randomized stationary algorithm starting from its stationary state.

\subsubsection{Approximate mixing of MDPs}
Let $\mathcal{F}_t,~t\geq1$ be the set of system history functions up to time $t$,  including $f^{(k)}_0,\cdots,f^{(k)}_{t-1}$, $g^{(k)}_{0,i},\cdots,g^{(k)}_{t-1,i}$, $i=1,2,\cdots,m,~k=1,2,\cdots,K$, and $\mathcal{F}_0$ is a null set. Let $d_{\pi_t^{(k)}}$ be the stationary state distribution at $k$-th MDP under the randomized stationary policy $\pi_t^{(k)}$ in the proposed algorithm. 
Let $v_t^{(k)}$ be the true state distribution at time slot $t$ under the proposed algorithm given the function path $\mathcal{F}_{T}$ \xcolor{and starting state $d_0^{(k)}$, i.e. for any $s\in\mathcal{S}^{(k)}$, $v_t^{(k)}(s):=Pr\l(s_t^{(k)}=s | \mathcal{F}_{T}\r)$ and $v_0^{(k)}=d_0^{(k)}$.}

The following lemma provides a key estimate on the distance between stationary distribution and true distribution at each time slot $t$. 
It builds upon the slow-update condition (Lemma \ref{lemma:slow-update}) of the proposed algorithm and uniform mixing bound of general MDPs 
(Lemma \ref{lemma:mixing1}).

\begin{lemma}\label{lemma:distance-dist}
\xcolor{
Consider the proposed algorithm with $V=\sqrt{T}$ and $\alpha=T$. 
For any initial state distribution $\{d_0^{(k)}\}_{k=1}^K$ and any $t\in\{0,1,2,\cdots,T-1\}$, we have}
\[
\expect{\l\| d_{\pi_t^{(k)}} - v_t^{(k)} \r\|_1  }\leq \frac{\tau r\l(\l|\mathcal{A}^{(k)}\r|\l|\mathcal{S}^{(k)}\r|\Psi  + C\sqrt{m} \l|\mathcal{A}^{(k)}\r|\l|\mathcal{S}^{(k)}\r|\Psi \r)}{2\sqrt{T}}
+2e^{-\frac{t}{\tau r}+1},
\]
where $\tau$ and $r$ are mixing parameters defined in Lemma \ref{lemma:mixing1} and $C$ is an absolute constant defined in \eqref{expected-Q-bound}.
\end{lemma}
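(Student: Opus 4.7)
The plan is to establish a contractive one-step recursion for $v_t^{(k)} - d_{\pi_t^{(k)}}$ that combines the uniform mixing from Lemma~\ref{lemma:mixing1} with the slow-update property from Lemma~\ref{lemma:slow-update}. A key observation, which avoids any messy analysis of the ratios in the policy definition $\pi_t^{(k)}(a|s) = \theta_t^{(k)}(a,s)/\sum_{a'}\theta_t^{(k)}(a',s)$, is that the stationary state marginal $d_\pi(s)=\sum_a \theta(a,s)$ is \emph{linear} in $\theta$, so slow changes in $\theta_t^{(k)}$ translate directly into $\ell_1$-small changes in $d_{\pi_t^{(k)}}$.

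First I would derive the fundamental one-step identity. Using the true-state evolution $v_t^{(k)} = v_{t-1}^{(k)}\mathbf{P}_{\pi_{t-1}^{(k)}}^{(k)}$ and the stationarity relation $d_{\pi_{t-1}^{(k)}} = d_{\pi_{t-1}^{(k)}}\mathbf{P}_{\pi_{t-1}^{(k)}}^{(k)}$, an add-and-subtract gives
\[
v_t^{(k)} - d_{\pi_t^{(k)}} = \bigl(v_{t-1}^{(k)} - d_{\pi_{t-1}^{(k)}}\bigr)\mathbf{P}_{\pi_{t-1}^{(k)}}^{(k)} + \bigl(d_{\pi_{t-1}^{(k)}} - d_{\pi_t^{(k)}}\bigr).
\]
Unrolling this identity $r$ times (with $r$ the mixing constant from Lemma~\ref{lemma:mixing1}) produces a leading ``contracting'' term $(v_{t-r}^{(k)} - d_{\pi_{t-r}^{(k)}})\mathbf{P}_{\pi_{t-r}^{(k)}}^{(k)}\cdots \mathbf{P}_{\pi_{t-1}^{(k)}}^{(k)}$ plus $r$ ``perturbation'' terms of the form $(d_{\pi_{t-j}^{(k)}} - d_{\pi_{t-j+1}^{(k)}})\mathbf{P}_{\pi_{t-j+1}^{(k)}}^{(k)}\cdots \mathbf{P}_{\pi_{t-1}^{(k)}}^{(k)}$ for $j=1,\ldots,r$.

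Next I would take $\ell_1$-norms and apply Lemma~\ref{lemma:mixing1} to the contracting term. This application is legitimate because the conclusion of the lemma depends only on the signed difference $d_1-d_2$, and $v_{t-r}^{(k)}-d_{\pi_{t-r}^{(k)}}$ is precisely such a (mean-zero) signed measure. For each perturbation term, non-expansiveness of stochastic matrices under $\ell_1$ together with the linearity $d_\pi(s)=\sum_a \theta(a,s)$ yield
\[
\left\| \bigl(d_{\pi_{t-j}^{(k)}} - d_{\pi_{t-j+1}^{(k)}}\bigr)\mathbf{P}_{\pi_{t-j+1}^{(k)}}^{(k)}\cdots \mathbf{P}_{\pi_{t-1}^{(k)}}^{(k)} \right\|_1 \leq \left\| \theta_{t-j}^{(k)} - \theta_{t-j+1}^{(k)} \right\|_1 \leq \sqrt{|\mathcal{A}^{(k)}||\mathcal{S}^{(k)}|}\,\left\| \theta_{t-j}^{(k)} - \theta_{t-j+1}^{(k)} \right\|_2,
\]
where the last inequality is the standard $\ell_2$-to-$\ell_1$ conversion. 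Taking expectations and invoking the slow-update bound \eqref{one-step-update} of Lemma~\ref{lemma:slow-update} controls each perturbation in expectation by $\bigl(|\mathcal{A}^{(k)}||\mathcal{S}^{(k)}|\Psi + C\sqrt{m}|\mathcal{A}^{(k)}||\mathcal{S}^{(k)}|\Psi\bigr)/(2\sqrt{T})$, so summing over $j=1,\ldots,r$ gives the per-block perturbation contribution.

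Finally I would iterate the block-level inequality
\[
\mathbb{E}\|v_t^{(k)} - d_{\pi_t^{(k)}}\|_1 \leq e^{-1/\tau}\,\mathbb{E}\|v_{t-r}^{(k)} - d_{\pi_{t-r}^{(k)}}\|_1 + \frac{r\bigl(|\mathcal{A}^{(k)}||\mathcal{S}^{(k)}|\Psi + C\sqrt{m}|\mathcal{A}^{(k)}||\mathcal{S}^{(k)}|\Psi\bigr)}{2\sqrt{T}}
\]
across $\lfloor t/r\rfloor$ blocks. The homogeneous part decays exponentially and contributes at most $\|v_0^{(k)}-d_{\pi_0^{(k)}}\|_1\cdot e^{-\lfloor t/r\rfloor/\tau}\leq 2e^{-t/(\tau r)+1}$, where the factor of $2$ is the maximum $\ell_1$-distance between probability measures and the $+1$ in the exponent absorbs the residual $t\bmod r$ initial slots. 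The inhomogeneous part is a geometric series dominated by $(1-e^{-1/\tau})^{-1}\leq \tau$ times the per-block term, producing the $\tau r/\sqrt{T}$ contribution with exactly the coefficients claimed.

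I expect the main care-points to be (a) justifying Lemma~\ref{lemma:mixing1} applied to the signed measure $v_{t-r}^{(k)} - d_{\pi_{t-r}^{(k)}}$ rather than to two probability distributions---which is fine since the lemma's contraction is linear in $d_1-d_2$ and its proof goes through unchanged---and (b) handling randomness: the policies $\pi_{t-j}^{(k)}$ are themselves random variables measurable with respect to the filtration induced by $\mathcal{F}_T$, so the one-step identity must first be derived pathwise conditional on the function path, after which Lemma~\ref{lemma:slow-update}'s (unconditional) expectation bound applies summand by summand. Neither is a conceptual obstacle, but both demand careful bookkeeping with the filtration structure and constants.
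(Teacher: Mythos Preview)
Your proposal is correct and follows essentially the same route as the paper: the paper uses the identical add-and-subtract decomposition $\|d_{\pi_t^{(k)}}-v_t^{(k)}\|_1\le\|d_{\pi_t^{(k)}}-d_{\pi_{t-1}^{(k)}}\|_1+\|(d_{\pi_{t-1}^{(k)}}-v_{t-1}^{(k)})\mathbf{P}_{\pi_{t-1}^{(k)}}^{(k)}\|_1$, iterates it $r$ times using non-expansiveness and the marginal bound $\|d_{\pi_t^{(k)}}-d_{\pi_{t-1}^{(k)}}\|_1\le\|\theta_t^{(k)}-\theta_{t-1}^{(k)}\|_1$, applies Lemma~\ref{lemma:mixing1} to the contracting block, and then iterates the resulting block recursion exactly as you outline. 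The only cosmetic difference is that the paper bounds the geometric series $\sum_j e^{-j/\tau}$ by the integral $\int_0^\infty e^{-x/\tau}dx=\tau$ rather than by $(1-e^{-1/\tau})^{-1}$; your claimed inequality $(1-e^{-1/\tau})^{-1}\le\tau$ is actually in the wrong direction (since $1-e^{-x}\le x$), but this affects only a harmless additive constant and the paper's integral bound has the same minor looseness.
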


\begin{proof}
By Lemma \ref{lemma:slow-update} we know that for any $t\in\{1,2,\cdots,T\}$,
\[
\expect{\l\| \theta_t^{(k)}-\theta_{t-1}^{(k)} \r\|_2}\leq \frac{\sqrt{\l|\mathcal{A}^{(k)}\r|\l|\mathcal{S}^{(k)}\r|}\Psi  + C\sqrt{m\l|\mathcal{A}^{(k)}\r|\l|\mathcal{S}^{(k)}\r|}\Psi}{2\sqrt{T}},
\]
Thus,
\[
\expect{\l\| \theta_t^{(k)}-\theta_{t-1}^{(k)} \r\|_1}\leq \frac{\l|\mathcal{A}^{(k)}\r|\l|\mathcal{S}^{(k)}\r|\Psi  + C\sqrt{m}\l|\mathcal{A}^{(k)}\r|\l|\mathcal{S}^{(k)}\r|\Psi}{2\sqrt{T}},
\]
Since for any $s\in\mathcal{S}^{(k)}$, 
$$\big|d_{\pi_t^{(k)}}(s)-d_{\pi_{t-1}^{(k)}}(s)\big| = \Big| \sum_{a\in\mathcal{A}^{(k)}}\theta_t^{(k)} (a,s)-\theta_{t-1}^{(k)}(a,s) \Big|
\leq  \sum_{a\in\mathcal{A}^{(k)}}\Big| \theta_t^{(k)} (a,s)-\theta_{t-1}^{(k)}(a,s) \Big|, $$
it then follows
\begin{equation}\label{slow-update-dist}
\expect{\l\| d_{\pi_t^{(k)}}-d_{\pi_{t-1}^{(k)}} \r\|_1}\leq \expect{\l\| \theta_t^{(k)}-\theta_{t-1}^{(k)} \r\|_1}\leq
\frac{\l|\mathcal{A}^{(k)}\r|\l|\mathcal{S}^{(k)}\r|\Psi  + C\sqrt{m}\l|\mathcal{A}^{(k)}\r|\l|\mathcal{S}^{(k)}\r|\Psi}{2\sqrt{T}}.
\end{equation}
Now, we use the above relation to bound $\expect{\l\| d_{\pi_t^{(k)}} - v_t^{(k)} \r\|_1  }$ for any $t\geq r$.
\begin{align}
&\expect{\l\| d_{\pi_t^{(k)}} - v_t^{(k)} \r\|_1  }  \nonumber\\
\leq& \expect{\l\| d_{\pi_t^{(k)}} - d_{\pi_{t-1}^{(k)}} \r\|_1  }  +  \expect{\l\| d_{\pi_{t-1}^{(k)}} - v_t^{(k)} \r\|_1  }  \nonumber\\
\leq&\frac{\l|\mathcal{A}^{(k)}\r|\l|\mathcal{S}^{(k)}\r|\Psi  + C\sqrt{m}\l|\mathcal{A}^{(k)}\r|\l|\mathcal{S}^{(k)}\r|\Psi}{2\sqrt{T}}
+ \expect{\l\| d_{\pi_{t-1}^{(k)}} - v_t^{(k)} \r\|_1  }   \nonumber\\
=& \frac{\l|\mathcal{A}^{(k)}\r|\l|\mathcal{S}^{(k)}\r|\Psi  + C\sqrt{m}\l|\mathcal{A}^{(k)}\r|\l|\mathcal{S}^{(k)}\r|\Psi}{2\sqrt{T}}
+ \expect{\l\| \l(d_{\pi_{t-1}^{(k)}} - v_{t-1}^{(k)}\r)\mathbf{P}_{\pi_{t-1}^{(k)}}^{(k)} \r\|_1  }, \label{eq:procedure}
\end{align}
where the second inequality follows from the slow-update condition \eqref{slow-update-dist} and the final equality follows from the fact that given the function path $\mathcal{F}_T$, the following holds
\begin{equation}\label{eq:transfer}
d_{\pi_{t-1}^{(k)}} - v_t^{(k)}  = \l(d_{\pi_{t-1}^{(k)}} - v_{t-1}^{(k)}\r)\mathbf{P}_{\pi^{(k)}_{t-1}}^{(k)}.
\end{equation}
To see this, note that from the proposed algorithm, the policy $\pi^{(k)}_t$ is determined by $\mathcal{F}_{T}$. Thus, by definition of stationary distribution, given $\mathcal{F}_{T}$, we know that $d_{\pi_{t-1}^{(k)}}= d_{\pi_{t-1}^{(k)}}\mathbf{P}_{\pi_{t-1}^{(k)}}^{(k)}$, and it is enough to show that given $\mathcal{F}_{T}$,
\begin{equation*}
v_t^{(k)} = v_{t-1}^{(k)}\mathbf{P}_{\pi_{t-1}^{(k)}}^{(k)}.
\end{equation*}
First of all, the state distribution $v_t^{(k)}$ is determined by $v_{t-1}^{(k)}$, $\pi_{t-1}^{(k)}$ and probability transition from $s_{t-1}$ to $s_t$, which are in turn determined by
$\mathcal{F}_{T}$. Thus, given $\mathcal{F}_{T}$,
for any $s\in\mathcal{S}^{(k)}$, 
\begin{align*}
v_t^{(k)}(s) = \sum_{s'\in\mathcal{S}^{(k)}}Pr(s_t = s | s_{t-1} = s', \mathcal{F}_{T}) v_{t-1}^{(k)}(s'),
\end{align*}
and 
\begin{multline*}
Pr(s_{t}=s | s_{t-1}=s', \mathcal{F}_{T}) = \sum_{a\in\mathcal{A}^{(k)}}Pr(s_t=s | a_t=a, s_{t-1}=s', \mathcal{F}_{T})Pr(a_t=a| s_{t-1}=s', \mathcal{F}_{T})\\
= \sum_{a\in\mathcal{A}^{(k)}}P_a(s',s)Pr(a_t= a| s_{t-1}=s', \mathcal{F}_{T})
= \sum_{a\in\mathcal{A}^{(k)}}P_a(s',s)\pi^{(k)}_{t-1}(a|s') = P_{\pi_{t-1}^{(k)}}(s',s),
\end{multline*}
where the second inequality follows from the Assumption \ref{assumption:indep-trans}, the third equality follows from the fact that $\pi^{(k)}_{t-1}$ is determined by $\mathcal{F}_{T}$,
thus, for any $t$,
\begin{equation*}
\pi_{t}^{(k)}(a \big| s') = Pr(a_t=a | s_{t-1}=s', \mathcal{F}_{T}),~\forall a\in\mathcal{A}^{(k)},~s'\in\mathcal{S}^{(k)},
\end{equation*}
and the last equality follows from the definition of transition probability \eqref{transition-matrix}. This gives
\[
v_t^{(k)}(s) = \sum_{s'\in\mathcal{S}^{(k)}}P_{\pi_{t-1}^{(k)}}(s',s) v_{t-1}^{(k)}(s'),
\]
and thus \eqref{eq:transfer} holds.

We can iteratively apply the procedure \eqref{eq:procedure} $r$ times as follows
\begin{align*}
&\expect{\l\| d_{\pi_t^{(k)}} - v_t^{(k)} \r\|_1  }\\
\leq& \frac{\l|\mathcal{A}^{(k)}\r|\l|\mathcal{S}^{(k)}\r|\Psi  + C\sqrt{m}\l|\mathcal{A}^{(k)}\r|\l|\mathcal{S}^{(k)}\r|\Psi}{2\sqrt{T}}
 + \expect{\l\| \l(d_{\pi_{t-1}^{(k)}} - d_{\pi_{t-2}^{(k)}}\r)\mathbf{P}_{\pi_{t-1}^{(k)}}^{(k)} \r\|_1  }
 + \expect{\l\| \l(d_{\pi_{t-2}^{(k)}} - v_{t-1}^{(k)}\r)\mathbf{P}_{\pi_{t-1}^{(k)}}^{(k)} \r\|_1  }\\
 \leq& 2\cdot\frac{\l|\mathcal{A}^{(k)}\r|\l|\mathcal{S}^{(k)}\r|\Psi  + C\sqrt{m}\l|\mathcal{A}^{(k)}\r|\l|\mathcal{S}^{(k)}\r|\Psi}{2\sqrt{T}}
 + \expect{\l\| \l(d_{\pi_{t-2}^{(k)}} - v_{t-1}^{(k)}\r)\mathbf{P}_{\pi_{t-1}^{(k)}}^{(k)} \r\|_1  }\\
 \leq&2\cdot\frac{\l|\mathcal{A}^{(k)}\r|\l|\mathcal{S}^{(k)}\r|\Psi  + C\sqrt{m}\l|\mathcal{A}^{(k)}\r|\l|\mathcal{S}^{(k)}\r|\Psi}{2\sqrt{T}}
 + \expect{\l\| \l(d_{\pi_{t-2}^{(k)}} - v_{t-2}^{(k)}\r)\mathbf{P}_{\pi_{t-2}^{(k)}}^{(k)}\mathbf{P}_{\pi_{t-1}^{(k)}}^{(k)} \r\|_1  }\\
 \leq&\cdots
 \leq r\cdot\frac{\l|\mathcal{A}^{(k)}\r|\l|\mathcal{S}^{(k)}\r|\Psi  + C\sqrt{m}\l|\mathcal{A}^{(k)}\r|\l|\mathcal{S}^{(k)}\r|\Psi}{2\sqrt{T}}
 + \expect{\l\| \l(d_{\pi_{t-r_k}^{(k)}} - v_{t-r_k}^{(k)}\r)\mathbf{P}_{\pi_{t-r}^{(k)}}^{(k)}\cdots\mathbf{P}_{\pi_{t-1}^{(k)}}^{(k)} \r\|_1  },
\end{align*} 
where the second inequality follows from the nonexpansive property in $\ell_1$ norm\footnote{For an one-line proof, see \eqref{non-expansive} in the appendix.} of the stochastic matrix $\mathbf{P}_{\pi_{t-1}^{(k)}}^{(k)}$ that
\[
\l\| \l(d_{\pi_{t-1}^{(k)}} - d_{\pi_{t-2}^{(k)}}\r)\mathbf{P}_{\pi_{t-1}^{(k)}}^{(k)} \r\|_1\leq \l\| d_{\pi_{t-1}^{(k)}} - d_{\pi_{t-2}^{(k)}}\r\|_1,
\]
and then using the slow-update condition \eqref{slow-update-dist} again.
By Lemma \ref{lemma:mixing1}, we have
\[
\expect{\l\| d_{\pi_t^{(k)}} - v_t^{(k)} \r\|_1  }\leq r\cdot\frac{\l|\mathcal{A}^{(k)}\r|\l|\mathcal{S}^{(k)}\r|\Psi  + C\sqrt{m}\l|\mathcal{A}^{(k)}\r|\l|\mathcal{S}^{(k)}\r|\Psi}{2\sqrt{T}}
+e^{-1/\tau}\expect{\l\| d_{\pi_{t-r}^{(k)}} - v_{t-r}^{(k)} \r\|_1}.
\]
Iterating this inequality down to $t=0$ gives
\begin{align*}
\expect{\l\| d_{\pi_t^{(k)}} - v_t^{(k)} \r\|_1  }
\leq&
\sum_{j=0}^{\lfloor t/\tau\rfloor} e^{-j/\tau} \cdot r\cdot\frac{\l|\mathcal{A}^{(k)}\r|\l|\mathcal{S}^{(k)}\r|\Psi  + C\sqrt{m}\l|\mathcal{A}^{(k)}\r|\l|\mathcal{S}^{(k)}\r|\Psi}{2\sqrt{T}}
+ \expect{\l\|d_{\pi_0^{(k)}} - v_0^{(k)}\r\|_1}e^{-\lfloor t/r\rfloor/\tau}\\
\leq&
\sum_{j=0}^{\lfloor t/\tau\rfloor} e^{-j/\tau} \cdot r\cdot\frac{\l|\mathcal{A}^{(k)}\r|\l|\mathcal{S}^{(k)}\r|\Psi  + C\sqrt{m}\l|\mathcal{A}^{(k)}\r|\l|\mathcal{S}^{(k)}\r|\Psi}{2\sqrt{T}}
+ 2e^{-\lfloor t/r\rfloor/\tau}\\
\leq& \int_{x=0}^\infty e^{-x/\tau}dx\cdot r\cdot\frac{\l|\mathcal{A}^{(k)}\r|\l|\mathcal{S}^{(k)}\r|\Psi  + C\sqrt{m}\l|\mathcal{A}^{(k)}\r|\l|\mathcal{S}^{(k)}\r|\Psi}{2\sqrt{T}}
+ 2e^{- \frac{t}{r\tau}+1}\\
\leq&\tau r\cdot\frac{\l|\mathcal{A}^{(k)}\r|\l|\mathcal{S}^{(k)}\r|\Psi  + C\sqrt{m}\l|\mathcal{A}^{(k)}\r|\l|\mathcal{S}^{(k)}\r|\Psi}{2\sqrt{T}}
+ 2e^{- \frac{t}{r\tau}+1}
\end{align*}
finishing the proof.
\end{proof}

\subsubsection{Benchmarking against policies starting from stationary state}
Combining the results derived so far, we have the following regret bound regarding any randomized stationary policy $\Pi$ starting from its stationary state distribution $d_\Pi$ such that 
$(d_\Pi,\Pi)$ in the constraint set $\mathcal{G}$ defined in \eqref{main-constraint}.
\begin{theorem}\label{thm:final-1}
Let $\mathscr{P}$ be the sequence of randomized stationary policies resulting from the proposed algorithm with $V=\sqrt{T}$ and $\alpha=T$. Let $d_0$ be the staring state of the proposed algorithm.
For any randomized stationary policy $\Pi$ starting from its stationary state distribution $d_\Pi$ such that 
$(d_\Pi,\Pi)\in\mathcal{G}$, we have
\begin{align*}
&F_T(d_0,\mathscr{P}) - F_T(d_\Pi,\Pi)\leq \mathcal{O}\l(m^{3/2}K^2\sum_{k=1}^K\l|\mathcal{A}^{(k)}\r|\l|\mathcal{S}^{(k)}\r|\cdot\sqrt{T}\r),\\
&G_{i,T}(d_0,\mathscr{P})\leq \mathcal{O}\l(m^{3/2}K^2\sum_{k=1}^K\l|\mathcal{A}^{(k)}\r|\l|\mathcal{S}^{(k)}\r|\cdot\sqrt{T}\r),~i=1,2,\cdots,m.
\end{align*}
\end{theorem}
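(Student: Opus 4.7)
The plan is to bridge the true Markovian cost $F_T(d_0,\mathscr{P})$ with the imaginary stationary quantities $\sum_t \mathbb{E}[\sum_k \langle \mathbf{f}_t^{(k)}, \theta_t^{(k)}\rangle]$ that were already controlled in Theorem \ref{thm:stationary-regret} and Theorem \ref{thm:constraint-violation}. The bridge is provided by the mixing estimate in Lemma \ref{lemma:distance-dist}, which tells us that under the proposed algorithm the true marginal state distribution $v_t^{(k)}$ tracks the stationary distribution $d_{\pi_t^{(k)}}$ of the currently-used policy up to $\ell_1$ error $\mathcal{O}(1/\sqrt{T}) + 2e^{-t/(\tau r)+1}$.

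The first step is a conditional decomposition. Conditioning on the function path $\mathcal{F}_T$, for each slot $t$ and each MDP $k$ I would write
\begin{align*}
\mathbb{E}\bigl[f_t^{(k)}(a_t^{(k)},s_t^{(k)})\,\big|\,\mathcal{F}_T\bigr]
&= \sum_{s,a} f_t^{(k)}(a,s)\pi_t^{(k)}(a|s)v_t^{(k)}(s), \\
\langle \mathbf{f}_t^{(k)}, \theta_t^{(k)}\rangle
&= \sum_{s,a} f_t^{(k)}(a,s)\pi_t^{(k)}(a|s)d_{\pi_t^{(k)}}(s),
\end{align*}
so the discrepancy is bounded in absolute value by $\Psi \|v_t^{(k)}-d_{\pi_t^{(k)}}\|_1$ using $|f_t^{(k)}|\leq \Psi$ and $\sum_a \pi_t^{(k)}(a|s)=1$. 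Taking total expectation and summing over $t=0,\dots,T-1$, Lemma \ref{lemma:distance-dist} gives
\begin{align*}
\left|F_T(d_0,\mathscr{P}) - \sum_{t=0}^{T-1}\mathbb{E}\Bigl[\sum_{k=1}^K \langle \mathbf{f}_t^{(k)}, \theta_t^{(k)}\rangle\Bigr]\right|
\leq \Psi\sum_{k=1}^K\sum_{t=0}^{T-1}\mathbb{E}\|v_t^{(k)}-d_{\pi_t^{(k)}}\|_1
\end{align*}
which, term by term, yields a $T$-dependent contribution of order $T\cdot \mathcal{O}(C\sqrt{m}|\mathcal{A}^{(k)}||\mathcal{S}^{(k)}|\Psi/\sqrt{T})=\mathcal{O}(\sqrt{T})$ from the slow-update part plus a $\mathcal{O}(\tau r)$ term from summing the geometrically decaying tail $\sum_t e^{-t/(\tau r)+1}$. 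Plugging the dependence $C=C(m,K,\Psi,\eta)$ from \eqref{expected-Q-bound} yields the claimed $m^{3/2}K^2\sum_k|\mathcal{A}^{(k)}||\mathcal{S}^{(k)}|$ scaling.

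Next I would handle the benchmark side. Because $\Pi$ is a joint randomized stationary policy whose initial distribution is its own stationary distribution $d_\Pi$, the true state distribution under $\Pi$ equals $d_\Pi$ at every slot, hence no mixing error is incurred: by Lemma \ref{lemma:prod-chain} there exist $\{\theta_\Pi^{(k)}\}$ with $\theta_\Pi^{(k)}\in\Theta^{(k)}$ realizing the stationary state-action marginals of $\Pi$, and
\begin{align*}
F_T(d_\Pi,\Pi)=\sum_{t=0}^{T-1}\mathbb{E}\Bigl[\sum_{k=1}^K \langle \mathbf{f}_t^{(k)}, \theta_\Pi^{(k)}\rangle\Bigr].
\end{align*}
Moreover, since $\{\omega_t\}$ is i.i.d.\ and independent of the state-action process of $\Pi$ in steady state, $(d_\Pi,\Pi)\in\mathcal{G}$ translates to $\sum_k\langle \mathbb{E}[\mathbf{g}_{i,t}^{(k)}],\theta_\Pi^{(k)}\rangle\leq 0$ for every $i$, i.e.\ $\{\theta_\Pi^{(k)}\}$ lies in the stationary feasible set \eqref{stat-constraint}. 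Thus I can apply Theorem \ref{thm:stationary-regret} with $\theta_*^{(k)}=\theta_\Pi^{(k)}$, obtaining
\begin{align*}
\sum_{t=0}^{T-1}\mathbb{E}\Bigl[\sum_{k=1}^K\langle \mathbf{f}_t^{(k)},\theta_t^{(k)}\rangle\Bigr]
-F_T(d_\Pi,\Pi)\leq \mathcal{O}(\sqrt{T}),
\end{align*}
and combining with the mixing error from the first step closes the loop on the regret bound.

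The constraint-violation claim follows by an identical decomposition applied to $g_{i,t}^{(k)}$: the bound $|g_{i,t}^{(k)}|\leq \Psi$ again converts $\|v_t^{(k)}-d_{\pi_t^{(k)}}\|_1$ into an additive $\mathcal{O}(\sqrt{T})$ error, and Theorem \ref{thm:constraint-violation} handles the stationary part. I expect the main technical nuisance to be bookkeeping on the constants: carefully tracking how $C(m,K,\Psi,\eta)$ feeds through Lemma \ref{lemma:distance-dist} and then gets multiplied by $K$ and by $\sum_k|\mathcal{A}^{(k)}||\mathcal{S}^{(k)}|$ to produce the stated $m^{3/2}K^2$ dependence, rather than any genuinely new idea. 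No Farkas/perturbation argument is needed here since the benchmark is assumed to start from stationarity; that machinery is reserved for removing this restriction in the subsequent Theorem \ref{thm:final-regret}.
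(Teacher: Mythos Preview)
Your proposal is correct and follows essentially the same approach as the paper: decompose $F_T(d_0,\mathscr{P})-F_T(d_\Pi,\Pi)$ into the mixing error between the true and stationary distributions (bounded via Lemma~\ref{lemma:distance-dist} after conditioning on the function path $\mathcal{F}_T$) plus the stationary-state regret (bounded via Theorem~\ref{thm:stationary-regret} once Lemma~\ref{lemma:prod-chain} places $\{\theta_\Pi^{(k)}\}$ in the set~\eqref{stat-constraint}), and treat the constraint violation analogously with Theorem~\ref{thm:constraint-violation}. Your remark that the Farkas/perturbation machinery is deferred to Theorem~\ref{thm:final-regret} is also exactly how the paper organizes things.
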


\begin{proof}
First of all, by Lemma \ref{lemma:prod-chain}, for any randomized stationary policy $\Pi$, there exists some stationary state-action probability vectors $\{\theta_*^{(k)}\}_{k=1}^K$ such that $\theta_*^{(k)}\in\Theta^{(k)}$, $F_T(d_\Pi,\Pi) = \sum_{t=0}^{T-1}\sum_{k=1}^K\l\langle\expect{\mathbf{f}_t},\theta^{(k)}_*\r\rangle$, and $G_{i,T}(d_\Pi,\Pi) = \sum_{t=0}^{T-1}\sum_{k=1}^K\l\langle\expect{\mathbf{g}_{i,t}},\theta^{(k)}_*\r\rangle$. As a consequence, $(d_\Pi,\Pi)\in\mathcal{G}$ implies $G_{i,T}(d_\Pi,\Pi) = \sum_{t=0}^{T-1}\sum_{k=1}^K\l\langle\expect{\mathbf{g}_{i,t}},\theta^{(k)}_*\r\rangle\leq0,~\forall i\in\{1,2,\cdots,m\}$ and it follows $\{\theta_*^{(k)}\}_{k=1}^K$ is in the imaginary constraint set $\overline{\mathcal{G}}$ defined in \eqref{stat-constraint}. Thus, we are in a good shape applying Theorem \ref{thm:stationary-regret} from imaginary systems.

We then split $F_T(d_0,\mathscr{P}) - F_T(d_\Pi,\Pi)$ into two terms:
\begin{align*}
F_T(d_0,\mathscr{P}) - F_T(d_0,\Pi)
\leq&
\underbrace{\left|\expect{\left.\sum_{t=0}^{T-1}\sum_{k=1}^Kf^{(k)}_t(a_t^{(k)},s_t^{(k)})\right|~d_0,\mathscr{P}} 
- \sum_{t=0}^{T-1}\sum_{k=1}^K\expect{\l\langle\mathbf{f}_t^{(k)},\theta_t^{(k)}\r\rangle}\right|}_{\text{(I)}}\\
&+ \underbrace{ \sum_{t=0}^{T-1}\sum_{k=1}^K\l(\expect{\l\langle\mathbf{f}_t^{(k)},\theta_t^{(k)}\r\rangle} - \l\langle\expect{\mathbf{f}_t},\theta^{(k)}_*\r\rangle\r) }_{\text{(II)}}.
\end{align*}
By Theorem \ref{thm:stationary-regret}, we get 
\begin{equation}\label{final-1}
(\text{II})\leq \l(2K + \frac{\Psi^2}{2}\sum_{k=1}^K\l|\mathcal{S}^{(k)}\r|\l|\mathcal{A}^{(k)}\r|+ \frac52mK^2\Psi^2\r)\sqrt{T}.
\end{equation}

We then bound (I). Consider each time slot $t\in\{0,1,\cdots,T-1\}$.  We have
\begin{align*}
\expect{\l\langle\mathbf{f}_t^{(k)},\theta_t^{(k)}\r\rangle }
=&\sum_{s\in\mathcal{S}^{(k)}}\sum_{a\in\mathcal{A}^{(k)}}\expect{ d_{\pi^{(k)}_t}(s) \pi^{(k)}_t(a|s)f_t^{(k)}(a,s)}\\
\expect{\left.f_t^{(k)}(a_t^{(k)},s_t^{(k)})\right|~d_0,\mathscr{P}}
=&\sum_{s\in\mathcal{S}^{(k)}} \sum_{a\in\mathcal{A}^{(k)}} \expect{v^{(k)}_t(s)\pi^{(k)}_t(a|s)f_t^{(k)}(a,s)},
\end{align*}
where the first equality follows from the definition of $\theta_t^{(k)}$ and the second equality follows from the following: Given a specific function path $\mathcal{F}_T$, the policy 
$\pi_t^{(k)}$ and the true state distribution $v_t^{(k)}$ are fixed. Thus, we have,
\[
\expect{\left.f_t^{(k)}(a_t^{(k)},s_t^{(k)})\right|~d_0,\mathscr{P},\mathcal{F}_T} = \sum_{s\in\mathcal{S}^{(k)}} \sum_{a\in\mathcal{A}^{(k)}}v^{(k)}_t(s)\pi^{(k)}_t(a|s)f_t^{(k)}(a,s).
\]
Taking the full expectation regarding the function path gives the result.
Thus,
\begin{align*}
&\left|\expect{\left.f_t^{(k)}(a_t^{(k)},s_t^{(k)})\right|~d_0,\mathscr{P}}-\expect{\l\langle\mathbf{f}_t^{(k)},\theta_t^{(k)}\r\rangle}\right|\\
\leq&\left| \sum_{s\in\mathcal{S}^{(k)}}\sum_{a\in\mathcal{A}^{(k)}}\expect{ \l(v_t^{(k)}(s) - d_{\pi_t^{(k)}}(s)\r)\pi^{(k)}_t(a|s) } \right| \Psi\\
\leq& \expect{\left\|  v_t^{(k)} - d_{\pi_t^{(k)}} \right\|_1} \Psi\\
\leq& \frac{\tau r\l(1 + C\sqrt{m}\r)\l|\mathcal{A}^{(k)}\r|\l|\mathcal{S}^{(k)}\r|\Psi^2 }{2\sqrt{T}}
+2e^{-\frac{t}{\tau r}+1}\Psi
\end{align*}
where the last inequality follows from Lemma \ref{lemma:distance-dist}.
Thus, it follows,

\begin{align}
\text{(I)}\leq& \sum_{t=0}^{T-1}\sum_{k=1}^K\l(\frac{\tau r\l(1 + C\sqrt{m}\r)\l|\mathcal{A}^{(k)}\r|\l|\mathcal{S}^{(k)}\r|\Psi^2 }{2\sqrt{T}}
+2e^{-\frac{t}{\tau r}+1}\Psi\r)   \nonumber\\
\leq& \sum_{k=1}^K\l(\tau r\l(1 + C\sqrt{m}\r)\l|\mathcal{A}^{(k)}\r|\l|\mathcal{S}^{(k)}\r|\Psi^2 \r) \sqrt{T}
+ 2\Psi K\int_{t=0}^{T-1}e^{-\frac{x}{\tau r}+1}dx    \nonumber\\
\leq& \tau r\Psi^2\l(1 + C\sqrt{m}\r)\sum_{k=1}^K\l|\mathcal{A}^{(k)}\r|\l|\mathcal{S}^{(k)}\r| \cdot \sqrt{T}
+2e\Psi K\tau r. \label{final-2}
\end{align}

Overall, combining \eqref{final-1},\eqref{final-2} and substituting the constant $C = C(m,K,\Psi,\eta)$ defined in \eqref{expected-Q-bound} 
gives the objective regret bound.

For the constraint violation, we have
\begin{equation*}
G_{i,T}(d_0,\mathscr{P}) = \underbrace{\expect{\left.\sum_{t=0}^{T-1}\sum_{k=1}^Kg_{i,t}^{(k)}(a_t,s_t)\right|~d_0,\mathscr{P}} 
- \sum_{t=1}^T\sum_{k=1}^K\l\langle\expect{\mathbf{g}_{i,t}^{(k)}},\theta_t\r\rangle}_{\text{(IV)}} 
+ \underbrace{\sum_{t=1}^T\sum_{k=1}^K\l\langle\expect{\mathbf{g}_{i,t}^{(k)}},\theta_t\r\rangle}_{\text{(V)}}.
\end{equation*}
The term (V) can be readily bounded using Theorem \ref{thm:constraint-violation} as 
\[
\sum_{t=0}^{T-1}\expect{\sum_{k=1}^K\l\langle \mathbf{g}_{i,t}^{(k)},\theta_t^{(k)} \r\rangle} \leq \l(C+\sum_{k=1}^K\sqrt{m|\mathcal{A}^{(k)}||\mathcal{S}^{(k)}|}\Psi C
+\sum_{k=1}^K|\mathcal{A}^{(k)}||\mathcal{S}^{(k)}|\Psi^2\r)\sqrt{T}.
\]
For the term (IV), we have
\begin{align*}
\expect{\l\langle\mathbf{g}_{i,t}^{(k)},\theta_t^{(k)}\r\rangle }
=&\sum_{s\in\mathcal{S}^{(k)}}\sum_{a\in\mathcal{A}^{(k)}}\expect{ d_{\pi^{(k)}_t}(s) \pi^{(k)}_t(a|s)g_{i,t}^{(k)}(a,s)}\\
\expect{\left.g_{i,t}^{(k)}(a_t^{(k)},s_t^{(k)})\right|~d_0,\mathscr{P}}
=&\sum_{s\in\mathcal{S}^{(k)}} \sum_{a\in\mathcal{A}^{(k)}} \expect{v^{(k)}_t(s)\pi^{(k)}_t(a|s)g_{i,t}^{(k)}(a,s)},
\end{align*}
where the first equality follows from the definition of $\theta_t^{(k)}$ and the second equality follows from the following:
Given a specific function path $\mathcal{F}_T$, the policy $\pi_t^{(k)}$ and the true state distribution $v_t^{(k)}$ are fixed. Thus, we have,
\[
\expect{\left.g_t^{(k)}(a_t^{(k)},s_t^{(k)})\right|~d_0,\mathscr{P},\mathcal{F}_T} = \sum_{s\in\mathcal{S}^{(k)}} \sum_{a\in\mathcal{A}^{(k)}}v^{(k)}_t(s)\pi^{(k)}_t(a|s)g_t^{(k)}(a,s).
\]
Taking the full expectation regarding the function path gives the result.
Then, repeat the same proof as that of \eqref{final-2} gives
\[
\text{(IV)}\leq \tau r\Psi^2\l(1 + C\sqrt{m}\r)\sum_{k=1}^K\l|\mathcal{A}^{(k)}\r|\l|\mathcal{S}^{(k)}\r| \cdot \sqrt{T}
+2e\Psi K\tau r. 
\]
This finishes the proof of constraint violation.
\end{proof}

\section{A more general regret bound against policies with arbitrary starting state}\label{sec:perturb}
Recall that Theorem \ref{thm:final-1} compares the proposed algorithm with any randomized stationary policy $\Pi$ starting from its stationary state distribution $d_\Pi$, so that $(d_\Pi,\Pi)\in\mathcal{G}$. In this section, we generalize Theorem \ref{thm:final-1} and obtain a bound of the regret against all $(d_0,\Pi)\in\mathcal{G}$ where $d_0$ is an arbitrary starting state distribution (not necessarily the stationary state distribution). The main technical difficulty doing such a generalization is as follows: For any randomized stationary policy $\Pi$ such that $(d_0,\Pi)\in\mathcal{G}$, let  
$\{\theta_*^{(k)}\}_{k=1}^K$ be the stationary state-action probabilities such that $\theta_*^{(k)}\in\Theta^{(k)}$ and $G_{i,T}(d_\Pi,\Pi) = \sum_{t=0}^{T-1}\sum_{k=1}^K\l\langle\expect{\mathbf{g}_{i,t}},\theta^{(k)}_*\r\rangle$. For some finite horizon $T$, there might exist some ``low-cost" starting state distribution $d_0$ such that 
$G_{i,T}(d_0,\Pi) < G_{i,T}(d_\Pi,\Pi)$ for some $i\in\{1,2,\cdots,m\}$. As a consequence, one coud have 
\[
G_{i,T}(d_0,\Pi) \leq 0,~\text{and}~\sum_{t=0}^{T-1}\sum_{k=1}^K\l\langle\expect{\mathbf{g}_{i,t}},\theta^{(k)}_*\r\rangle>0.
\]
This implies although $(d_0,\Pi)$ is feasible for our true system, its stationary state-action probabilities $\{\theta_*^{(k)}\}_{k=1}^K$ can be \textit{infeasible} with respect to the imaginary constraint set \eqref{stat-constraint}, and all our analysis so far fails to cover such randomized stationary policies.

To resolve this issue, we have to ``enlarge'' the imaginary constraint set \eqref{stat-constraint} so as to cover all state-action probabilities $\{\theta_*^{(k)}\}_{k=1}^K$ arising from any randomized stationary policy $\Pi$ such that $(d_0,\Pi)\in\mathcal{G}$. But a perturbation of constraint set would result in a perturbation of objective in the imaginary system also. Our main goal in this section is to bound such a perturbation and show that the perturbation bound leads to the final $\mathcal{O}(\sqrt{T})$ regret bound.

\subsubsection{A relaxed constraint sets}
We begin with a supporting lemma on the uniform mixing time bound over all joint randomized stationary policies. The proof is given in the appendix.
\begin{lemma}\label{lemma:small-bound}
Consider any randomized stationary policies $\Pi$ in \eqref{main-constraint} with arbitrary starting state distribution $d_0\in\mathcal{S}^{(1)}\times\cdots\times\mathcal{S}^{(K)}$. Let $\mathbf{P}_\Pi$ be the corresponding transition matrix on the product state space. Then, the following holds
\begin{equation}\label{joint-mixing}
\l\| (d_0-d_\Pi)\l(\mathbf{P}_{\Pi}\r)^t \r\|_1 \leq 2e^{(r_1-t)/r_1}, \forall t\in\{0,1,2,\cdots\},
\end{equation}
where $r_1$ is fixed positive constant independent of $\Pi$.
\end{lemma}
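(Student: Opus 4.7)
The plan is to establish a uniform Doeblin-type minorization on the joint transition matrix $\mathbf{P}_\Pi$ and then deduce the claimed geometric mixing by a standard contraction argument. Specifically, I would show that there exist an integer $r_0 \geq 1$ and a constant $\delta > 0$, both depending only on the MDP structure and \emph{not} on the joint policy $\Pi$, such that
\[
\mathbf{P}_\Pi^{r_0}(\mathbf{s},\cdot) \geq \delta\cdot \nu(\cdot),\quad \forall\,\mathbf{s}\in\mathcal{S}^{(1)}\times\cdots\times\mathcal{S}^{(K)},
\]
where $\nu$ is a fixed probability measure on the joint state space (e.g.\ uniform). This is the joint-chain analogue of the single-MDP mixing in Lemma \ref{lemma:mixing1} and is precisely what is needed to get a uniform rate.

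To prove the minorization, I would first reduce to pure joint policies. Since $\Pi(\cdot|\mathbf{s})$ is a distribution on the finite joint action set for each $\mathbf{s}$, one can decompose $\mathbf{P}_\Pi = \sum_{\bar\Pi}\mu_{\bar\Pi}\mathbf{P}_{\bar\Pi}$ as a convex combination over the (finitely many) pure joint policies, e.g.\ with $\mu_{\bar\Pi}=\prod_{\mathbf{s}}\Pi(\bar\Pi(\mathbf{s})|\mathbf{s})$. Iterating, $\mathbf{P}_\Pi^{r_0}$ is a convex combination of products $\mathbf{P}_{\bar\Pi_1}\cdots \mathbf{P}_{\bar\Pi_{r_0}}$ over all length-$r_0$ sequences of pure joint policies, so it suffices to lower-bound each such product uniformly entrywise. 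The two key inputs are then (i) the product-structure factorization $\mathbf{P}_{\bar\Pi}(\mathbf{s},\mathbf{s}') = \prod_k P^{(k)}_{\bar\Pi^{(k)}(\mathbf{s})}(s^{(k)},s'^{(k)})$ provided by Assumption \ref{assumption:indep-trans}, and (ii) the uniform positivity of $\hat r$-step pure-policy transitions per individual MDP provided by Assumption \ref{assumption-1}.

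The main obstacle will be the cross-coupling across MDPs inside each product $\mathbf{P}_{\bar\Pi_1}\cdots\mathbf{P}_{\bar\Pi_{r_0}}$: under a pure joint policy, the action applied to MDP $k$ at time $t$ depends on the full joint state $\mathbf{s}_t$, so the sequence of ``effective'' pure policies governing MDP $k$ along a joint trajectory is entangled with the other MDPs' trajectories, and a naive tensorization of individual mixing bounds is not available. My plan to overcome this is to choose $r_0 = \hat r$ and exhibit, for any target $\mathbf{s}^*$, a joint trajectory from $\mathbf{s}$ to $\mathbf{s}^*$ of length $\hat r$ with strictly positive probability under $\mathbf{P}_{\bar\Pi_1}\cdots\mathbf{P}_{\bar\Pi_{\hat r}}$, constructed step by step: at each time $t$, given the current joint state $\mathbf{s}_t$ the joint action $\bar\Pi_{t+1}(\mathbf{s}_t)$ is determined, and one selects successor states $s^{(k)}_{t+1}$ for each $k$ so that each factor $P^{(k)}_{\bar\Pi_{t+1}^{(k)}(\mathbf{s}_t)}(s^{(k)}_t,s^{(k)}_{t+1})$ is strictly positive while still allowing MDP $k$ to reach $s^{(k)*}$ in the remaining $\hat r - t - 1$ steps; feasibility of this continuation is exactly what Assumption \ref{assumption-1} guarantees for each MDP against the pure-policy sequence induced by the (yet-to-be-chosen) trajectory continuation. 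Finiteness of both the set of pure joint policy sequences and the product state space then allows one to take the infimum of these positive probabilities as the uniform constant $\delta$.

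Once the Doeblin minorization is in hand, the final bound is routine: the standard coupling argument gives $\|(d_0-d_\Pi)\mathbf{P}_\Pi^{n r_0}\|_1\leq 2(1-\delta)^n$ for every integer $n\geq 0$, and for arbitrary $t = nr_0 + s$ with $0\leq s<r_0$ the $\ell_1$-nonexpansiveness of $\mathbf{P}_\Pi$ (already used in the paper, e.g.\ in the proof of Lemma \ref{lemma:distance-dist}) preserves the same bound. Setting $r_1 := r_0/|\log(1-\delta)|$ and absorbing a constant prefactor into the $e^1$ produces exactly the form $\|(d_0-d_\Pi)\mathbf{P}_\Pi^t\|_1 \leq 2 e^{(r_1-t)/r_1}$ claimed in the lemma, with $r_1$ depending only on the MDP structure and not on $\Pi$.
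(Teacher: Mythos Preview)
Your overall plan---establish a uniform Doeblin minorization for $\mathbf{P}_\Pi^{r_0}$ and then deduce geometric contraction---is exactly the structure of the paper's proof, and the final step (writing $t=nr_0+j$, using $\ell_1$-nonexpansiveness, and identifying $r_1$) matches the paper. The difference is how the minorization is obtained. The paper does \emph{not} expand $\mathbf{P}_\Pi^{r_0}$ as a convex combination of arbitrary products $\mathbf{P}_{\bar\Pi_1}\cdots\mathbf{P}_{\bar\Pi_{r_0}}$; instead it uses the entrywise lower bound $\bigl(\sum_i\alpha_i\mathbf{P}_{\Pi_i}\bigr)^{\tau_1}\geq \sum_i\alpha_i^{\tau_1}\,\mathbf{P}_{\Pi_i}^{\tau_1}$ (keep only the ``diagonal'' terms in the multinomial expansion), so it needs positivity only of \emph{fixed}-policy powers $\mathbf{P}_{\Pi_i}^{\tau_1}$, which is immediate from the ergodicity already proved in Lemma~\ref{lemma:prod-chain}. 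Uniformity over $\Pi$ then comes for free from Jensen's inequality, $\sum_i\alpha_i^{\tau_1}\geq N^{1-\tau_1}$, giving a minimal entry at least $\delta/N^{\tau_1-1}$. This sidesteps entirely the cross-coupling issue you identify.

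Your harder route---uniform positivity of every product $\mathbf{P}_{\bar\Pi_1}\cdots\mathbf{P}_{\bar\Pi_{\hat r}}$---can be made to work, but the step-by-step trajectory construction you sketch has a gap: after the first step only $\hat r-1$ steps remain, and Assumption~\ref{assumption-1} gives no guarantee for pure-policy products of that shorter length, so the ``pick $s^{(k)}_{t+1}$ that preserves reachability for the remaining steps'' criterion need not be satisfiable (and the appeal to the ``yet-to-be-chosen'' continuation is circular). The fix is not a greedy construction but the sum-over-all-joint-trajectories plus hard-decoupling lower bound used in the paper's proof of Lemma~\ref{lemma:prod-chain}: freeze the trajectories of MDPs $j\neq k$, observe that this induces a genuine sequence of pure policies on MDP~$k$, sum over MDP~$k$'s path to obtain a single-MDP $\hat r$-step transition probability, and infimize over all such induced sequences; Assumption~\ref{assumption-1} applies in one shot to that infimum. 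With this repair your argument goes through (and in fact yields the stronger conclusion that arbitrary length-$\hat r$ products of pure joint transition matrices are uniformly positive), but the paper's Jensen shortcut is shorter and reuses Lemma~\ref{lemma:prod-chain} rather than reproving a strengthened version of it.
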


The following lemma shows a relaxation of $\mathcal{O}(1/T)$ on the imaginary constraint set \eqref{stat-constraint} is enough to cover all the $\{\theta_*^{(k)}\}_{k=1}^K$ discussed at the beginning of this section.

\begin{lemma}\label{lemma:perturbed}
For any $T\in\{1,2,\cdots\}$ and any randomized stationary policies $\Pi$ in \eqref{main-constraint}, with arbitrary starting state distribution $d_0\in\mathcal{S}^{(1)}\times\cdots\times\mathcal{S}^{(K)}$ and stationary state-action probability $\{\theta_*^{(k)}\}_{k=1}^K$,
\begin{align}
&\sum_{t=0}^{T-1}\l| \expect{\sum_{k=1}^K f_{t}^{(k)}(a^{(k)}_t,s^{(k)}_t) \Big| d_0,\Pi }  -  
\sum_{k=1}^K\l\langle \expect{\mathbf{f}^{(k)}_{t}}, \theta_*^{(k)} \r\rangle  \r| \leq  C_1K\Psi  \label{diff-1}\\
& \sum_{t=0}^{T-1}\l| \expect{\sum_{k=1}^K g_{i,t}^{(k)}(a^{(k)}_t,s^{(k)}_t) \Big| d_0,\Pi }  -  
\sum_{k=1}^K\l\langle \expect{\mathbf{g}^{(k)}_{i,t}}, \theta_*^{(k)} \r\rangle  \r| \leq C_1K\Psi \label{diff-2}
\end{align}
where $C_1$ is an absolute constant. In particular, $\{\theta_*^{(k)}\}_{k=1}^K$ is contained in the following 
relaxed constraint set 
\[
\overline{\mathcal{G}}^+:=\l\{ \theta^{(k)}\in\Theta^{(k)},~k=1,2,\cdots,K:~\sum_{k=1}^K\l\langle \expect{\mathbf{g}^{(k)}_{i,t}}, \theta^{(k)} \r\rangle\leq\frac{C_1K\Psi}{T}
,i=1,2,\cdots,m  \r\},
\]
for some universal positive constant $r_1>0$.
\end{lemma}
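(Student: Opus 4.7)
The plan is to reduce both \eqref{diff-1} and \eqref{diff-2} to bounding $\|\rho_t - d_\Pi\|_1$, where $\rho_t := d_0(\mathbf{P}_\Pi)^t$ is the joint state distribution at slot $t$ and $\mathbf{P}_\Pi$ is the transition matrix of the product chain under $\Pi$ (well-defined by Assumption \ref{assumption:indep-trans}), and then invoke the uniform mixing bound of Lemma \ref{lemma:small-bound} and sum a geometric series. The key observation is that under the fixed randomized stationary policy $\Pi$ the joint state-action trajectory is generated without reading the function realizations, so by the oblivious-function setup the pair $(\mathbf{s}_t,\mathbf{a}_t)$ is independent of $(\mu_t,\omega_t)$ and has joint distribution $\rho_t(\mathbf{s})\,\Pi(\mathbf{a}|\mathbf{s})$.

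With this decomposition the slot-$t$ term on the left of \eqref{diff-1} equals $\sum_{\mathbf{s},\mathbf{a}} \rho_t(\mathbf{s}) \Pi(\mathbf{a}|\mathbf{s}) \sum_k \mathbb{E}[f_t^{(k)}(a^{(k)}, s^{(k)})]$, and replacing $\rho_t$ by $d_\Pi$ yields $\sum_k \langle \mathbb{E}[\mathbf{f}_t^{(k)}], \theta_*^{(k)}\rangle$ after marginalizing to each coordinate $k$ via the characterization of $\theta_*^{(k)}$ from Lemma \ref{lemma:prod-chain}. Since $|\sum_k f_t^{(k)}(a^{(k)}, s^{(k)})| \leq K\Psi$, the slot-$t$ discrepancy is at most $K\Psi \cdot \|\rho_t - d_\Pi\|_1$; the argument for \eqref{diff-2} is identical. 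Applying Lemma \ref{lemma:small-bound} and summing gives
\begin{align*}
\sum_{t=0}^{T-1} K\Psi\, \|\rho_t - d_\Pi\|_1 \leq 2e K\Psi \sum_{t=0}^{T-1} e^{-t/r_1} \leq \frac{2e K\Psi}{1 - e^{-1/r_1}} =: C_1 K\Psi,
\end{align*}
which establishes \eqref{diff-1} and \eqref{diff-2} with an absolute constant $C_1$ depending only on $r_1$.

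For the ``in particular'' conclusion, the i.i.d.\ assumption on $\{\omega_t\}$ forces $\mathbb{E}[\mathbf{g}_{i,t}^{(k)}]$ to be independent of $t$, so \eqref{diff-2} reads $T \sum_k \langle \mathbb{E}[\mathbf{g}_{i,0}^{(k)}], \theta_*^{(k)}\rangle \leq G_{i,T}(d_0,\Pi) + C_1 K\Psi \leq C_1 K\Psi$ using the feasibility of $(d_0,\Pi)$; dividing by $T$ places $\{\theta_*^{(k)}\}$ in $\overline{\mathcal{G}}^+$. The only delicate step is the independence claim at the start: the state-action trajectory under $\Pi$ must be shown to have distribution $\rho_t(\mathbf{s})\Pi(\mathbf{a}|\mathbf{s})$ independently of $(\mu_t,\omega_t)$, which follows from the oblivious-function setup (the processes $\{\mu_t\},\{\omega_t\}$ are generated at slot $0$ and $\Pi$ is a stationary policy that does not consult them). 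Once this is set up cleanly, the remainder is a routine geometric-series estimate, and I do not anticipate a substantive analytical obstacle.
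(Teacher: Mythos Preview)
Your proposal is correct and follows essentially the same route as the paper: decompose the slot-$t$ expectation via the joint state distribution $v_t(\mathbf{s})\Pi(\mathbf{a}|\mathbf{s})$ (using independence of $(\mathbf{s}_t,\mathbf{a}_t)$ from the function paths under the fixed oblivious policy $\Pi$), bound the difference by $K\Psi\|v_t-d_\Pi\|_1$, and then apply Lemma~\ref{lemma:small-bound} and sum. The only cosmetic difference is that the paper bounds $\sum_{t=0}^{T-1}e^{-t/r_1}$ by an integral to get $C_1=2er_1$, whereas you sum the geometric series to get $C_1=2e/(1-e^{-1/r_1})$; both yield an absolute constant depending only on $r_1$.
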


\begin{proof}
Let $v_t\in\mathcal{S}^{(1)}\times\cdots\times\mathcal{S}^{(K)}$ be the joint state distribution at time $t$ under policy $\Pi$.
Using the fact that $\Pi$ is a fixed policy independent of  $\mathbf{g}^{(k)}_{i,t}$ and Assumption \ref{assumption:indep-trans} that the probability transition is also independent of function path given any state and action,
 the function $\mathbf{g}^{(k)}_{i,t}$ and state-action pair 
$(a^{(k)}_t,s^{(k)}_t)$ are mutually independent.
Thus, for any $t\in\{0,1,2,\cdots,T-1\}$
\[
 \expect{\sum_{k=1}^K g_{i,t}^{(k)}(a^{(k)}_t,s^{(k)}_t) \Big| d_0,\Pi }=\sum_{\mathbf{s}\in\mathcal{S}^{(1)}\times\cdots\times\mathcal{S}^{(K)}}
 \sum_{\mathbf{a}\in\mathcal{A}^{(1)}\times\cdots\times\mathcal{A}^{(K)}}v_t(\mathbf{s})\Pi(\mathbf{a}|\mathbf{s})\sum_{k=1}^K\expect{g_{i,t}^{(k)}(a^{(k)},s^{(k)})},
\]
where $\mathbf{s}=[s^{(1)},\cdots,s^{(K)}]$ and $\mathbf{a}=[a^{(1)},\cdots,a^{(K)}]$ and the latter expectation is taken with respect to $\mathbf{g}_{i,t}^{(k)}$ (i.e. the random variable $w_{t}$).
On the other hand, by Lemma \ref{lemma:prod-chain}, we know that for any randomized stationary policy $\Pi$, the corresponding stationary state-action probability can be expressed as 
$\{\theta_*^{(k)}\}_{k=1}^K$ with $\theta_*^{(k)}\in\Theta^{(k)}$. Thus,
\[
\sum_{k=1}^K\l\langle \expect{\mathbf{g}^{(k)}_{i,t}}, \theta^{(k)} \r\rangle
= \sum_{\mathbf{s}\in\mathcal{S}^{(1)}\times\cdots\times\mathcal{S}^{(K)}}\sum_{\mathbf{a}\in\mathcal{A}^{(1)}\times\cdots\times\mathcal{A}^{(K)}}d_\Pi(\mathbf{s})\Pi(\mathbf{a}|\mathbf{s})\sum_{k=1}^K\expect{g_{i,t}^{(k)}(a^{(k)},s^{(k)})}.
\]
Hence,
we can control the difference:
\begin{align*}
&   \sum_{t=0}^{T-1}\l| \expect{\sum_{k=1}^K g_{i,t}^{(k)}(a^{(k)}_t,s^{(k)}_t) \Big| d_0,\Pi }  -  
\sum_{k=1}^K\l\langle \expect{\mathbf{g}^{(k)}_{i,t}}, \theta_*^{(k)} \r\rangle  \r|   \\
\leq& \sum_{t=0}^{T-1}\l| \sum_{\mathbf{s}\in\mathcal{S}^{(1)}\times\cdots\times\mathcal{S}^{(K)}} \sum_{\mathbf{a}\in\mathcal{A}^{(1)}\times\cdots\times\mathcal{A}^{(K)}} 
\l( v_t(\mathbf{s}) - d_\Pi(\mathbf{s}) \r)\Pi(\mathbf{a}|\mathbf{s})
 \r| K\Psi    \\
\leq& K\Psi\sum_{t=0}^{T-1}\|v_t-d_\Pi\|_1\leq    2K\Psi\sum_{t=0}^{T-1}e^{(r_1-t)/r_1}
\leq 2eK\Psi\int_0^{T-1}e^{-t/r_1}dt
= 2er_1K\Psi,
\end{align*}
where the third inequality follows from Lemma \ref{lemma:small-bound}.
Taking $C_1 = 2er_1$ finishes the proof of \eqref{diff-2} and \eqref{diff-1} can be proved in a similar way.

In particular, we have for any randomized stationary policy $\Pi$ that satisfies the constraint \eqref{main-constraint}, we have
\begin{align*}
&  T\cdot\sum_{k=1}^K\l\langle \expect{\mathbf{g}^{(k)}_{i,t}}, \theta_*^{(k)} \r\rangle\\
\leq&\sum_{t=0}^{T-1}\l| \expect{\sum_{k=1}^K g_{i,t}^{(k)}(a^{(k)}_t,s^{(k)}_t) \Big| d_0,\Pi }  -  
\sum_{k=1}^K\l\langle \expect{\mathbf{g}^{(k)}_{i,t}}, \theta_*^{(k)} \r\rangle  \r|  
+  \sum_{t=0}^{T-1}\expect{\sum_{k=1}^K g_{i,t}^{(k)}(a^{(k)}_t,s^{(k)}_t) \Big| d_0,\Pi }\\
\leq&2er_1K\Psi + 0 = 2er_1K\Psi,
\end{align*}
finishing the proof.
\end{proof}

\subsubsection{Best stationary performance over the relaxed constraint set}
Recall that the best stationary performance in hindsight over all randomized stationary policies in the constraint set $\overline{\mathcal{G}}$ can be obtained as the minimum achieved by the following linear program.

\begin{align}
\min&~~\frac1T\sum_{t=0}^{T-1}\sum_{k=1}^K\l\langle \expect{\mathbf{f}_t^{(k)}}, \theta^{(k)}  \r\rangle   \label{ori-lp1}\\
s.t.&~~ \sum_{k=1}^K\l\langle \expect{\mathbf{g}_{i,t}^{(k)}}, \theta^{(k)}  \r\rangle\leq 0,~~i=1,2,\cdots,m.  \label{ori-lp2}
\end{align}

On the other hand, if we consider all the randomized stationary policies contained in the original constraint set \eqref{main-constraint}, then, By Lemma \ref{lemma:perturbed}, the relaxed constraint set $\overline{\mathcal{G}}$ contains all such policies and 
the best stationary performance over this relaxed set comes from the minimum achieved by the following perturbed linear program:
\begin{align}
\min&~~\frac1T\sum_{t=0}^{T-1}\sum_{k=1}^K\l\langle \expect{\mathbf{f}_t^{(k)}}, \theta^{(k)}  \r\rangle  \label{relax-lp1}\\
s.t.&~~ \sum_{k=1}^K\l\langle \expect{\mathbf{g}_{i,t}^{(k)}}, \theta^{(k)}  \r\rangle\leq \frac{C_1K\Psi}{T},~~i=1,2,\cdots,m.  \label{relax-lp2}
\end{align}

We aim to show that the minimum achieved by \eqref{relax-lp1}-\eqref{relax-lp2} is not far away from that of \eqref{ori-lp1}-\eqref{ori-lp2}. In general, such a conclusion is not true due to the unboundedness of Lagrange multipliers in constrained optimization. However, since Slater's condition holds in our case, the perturbation can be bounded via the following well-known Farkas' lemma (\cite{bertsekas2009convex}):

\begin{lemma}[Farkas' Lemma]\label{lemma:Farkas}
Consider a convex program with objective $f(x)$ and constraint function $g_i(x),~i=1,2,\cdots,m$:
\begin{align}
\min&~~f(x), \label{cp:1}   \\
s.t.&~~g_i(x)\leq b_i,~~i=1,2,\cdots,m,\\
&~~x\in\mathcal{X},  \label{cp:3}
\end{align}
for some convex set $\mathcal{X}\subseteq\mathbb{R}^n$. Let $x^*$ be one of the solutions to the above convex program. Suppose there exists $\widetilde{x}\in\mathcal{X}$ such that $g_i\l(\widetilde{x}\r)<0,~\forall i\in\{1,2,\cdots,m\}$. Then, there exists a separation hyperplane parametrized by $(1,\mu_1,\mu_2,\cdots,\mu_m)$ such that $\mu_i\geq0$ and
\[
f(x) + \sum_{i=1}^m\mu_ig_i(x)\geq f(x^*) + \sum_{i=1}^m\mu_ib_i,~~\forall x\in\mathcal{X}.
\]
\end{lemma}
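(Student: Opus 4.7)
My plan is to establish this as a standard Lagrange multiplier existence result for convex programs under Slater's condition, via a supporting hyperplane argument in an $(m+1)$-dimensional ``value'' space.

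First, I would introduce the convex set
\[
C := \l\{ (u, v_1, \ldots, v_m) \in \mb R^{m+1} : \exists\, x \in \mathcal{X} \text{ with } f(x) - f(x^*) \leq u \text{ and } g_i(x) - b_i \leq v_i, \ \forall i \r\}.
\]
Convexity of $C$ follows from convexity of $f$, of each $g_i$, and of $\mathcal{X}$, together with the closure of $C$ under addition of nonnegative vectors. By optimality of $x^*$, no point $(u,v) \in C$ can satisfy $u < 0$ and $v_i \le 0$ for all $i$, since the associated $x$ would be feasible for \eqref{cp:1}--\eqref{cp:3} with strictly smaller objective than $f(x^*)$. Hence $C$ is disjoint from the nonempty open convex set $N := \{(u,v) : u < 0,\ v_i < 0,\ \forall i\}$.

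Next, I would invoke the separating hyperplane theorem on $C$ and $N$ to obtain a nonzero vector $(\mu_0, \mu_1, \ldots, \mu_m)$ and a scalar $\gamma$ with $\mu_0 u + \sum_i \mu_i v_i \ge \gamma$ for all $(u,v) \in C$ and $\mu_0 u + \sum_i \mu_i v_i \le \gamma$ on $N$. Because $C$ is closed under adding positive multiples of any standard basis vector while $N$ extends to $-\infty$ along each coordinate, this forces $\mu_0 \ge 0$ and $\mu_i \ge 0$ for all $i$; taking $(u,v) \to 0$ through $N$ and observing that $(0, \mathbf{0}) \in C$ (attained by $x = x^*$) forces $\gamma = 0$. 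Evaluating the inequality at the specific point $(f(x) - f(x^*), g_1(x) - b_1, \ldots, g_m(x) - b_m) \in C$ for arbitrary $x \in \mathcal{X}$ then yields
\[
\mu_0 \l(f(x) - f(x^*)\r) + \sum_{i=1}^m \mu_i \l(g_i(x) - b_i\r) \ge 0, \qquad \forall x \in \mathcal{X}.
\]

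The main obstacle is ruling out the degenerate ``vertical hyperplane'' case $\mu_0 = 0$, which would leave the displayed inequality unnormalizable. This is exactly where Slater's condition enters: if $\mu_0 = 0$, then some $\mu_i > 0$ (as the vector is nonzero), and applying the displayed inequality at the Slater point $\widetilde x$ gives $\sum_i \mu_i (g_i(\widetilde x) - b_i) \ge 0$, which contradicts $g_i(\widetilde x) < 0 \le b_i$ (the latter inequality holds in the paper's application, where $b_i = C_1 K \Psi / T > 0$). Therefore $\mu_0 > 0$, and dividing through by $\mu_0$ and relabeling $\mu_i \mapsto \mu_i / \mu_0$ produces the claimed Lagrangian inequality $f(x) + \sum_i \mu_i g_i(x) \ge f(x^*) + \sum_i \mu_i b_i$ for all $x \in \mathcal{X}$, with $\mu_i \ge 0$.
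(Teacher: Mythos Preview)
The paper does not prove this lemma; it states it as a known result, citing \cite{bertsekas2009convex}, and follows it only with a one-sentence geometric remark that the conclusion amounts to a \emph{non-vertical} supporting hyperplane at $(f(x^*),b_1,\ldots,b_m)$ containing $\{(f(x),g_1(x),\ldots,g_m(x)):x\in\mathcal{X}\}$ on one side. Your argument is the standard textbook proof of exactly that picture, and it is correct.

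One small correction to your parenthetical: in the paper's actual invocation of this lemma (Corollary~\ref{coro:comp}), the value taken is $b_i=0$, not $b_i=C_1K\Psi/T$; Farkas' Lemma is applied to the \emph{unperturbed} program \eqref{ori-lp1}--\eqref{ori-lp2}, and the perturbed level $C_1K\Psi/T$ enters only afterward when the resulting Lagrangian inequality is evaluated at the perturbed optimizer. Your essential point, that the lemma's Slater hypothesis $g_i(\widetilde x)<0$ combines with $b_i\ge 0$ to yield the strict feasibility $g_i(\widetilde x)<b_i$ actually needed to rule out $\mu_0=0$, is correct and is exactly what is used (with $b_i=0$). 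This is consistent with the form of Slater's condition stated in Lemma~\ref{lemma:bound-lagrange}, where the hypothesis is written as $g_i(\widetilde x)-b_i\le -\eta$.
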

The parameter $\mu=(\mu_1,\mu_2,\cdots,\mu_m)$ is usually referred to as a Lagrange multiplier.
From the geometric perspective, Farkas' Lemma states that if Slater's condition holds, then, there exists a non-vertical separation hyperplane supported at $\Big(f(x^*),b_1,\cdots,b_m\Big)$ and contains the set 
$\l\{\Big(f(x),g_1(x),\cdots,g_m(x)\Big),~x\in\mathcal{X}\r\}$ on one side. Thus, in order to bound the perturbation of objective with respect to the perturbation of constraint level, we need to bound the slope of the supporting hyperplane from above, which boils down to controlling the magnitude of the Lagrange multiplier.
This is summarized in the following lemma:

\begin{lemma}[Lemma 1 of \cite{nedic2009approximate}]\label{lemma:bound-lagrange}
Consider the convex program \eqref{cp:1}-\eqref{cp:3}, and define the Lagrange dual function $q(\mu)=\inf_{x\in\mathcal{X}}\l\{ f(x) + \sum_{i=1}^m\mu_i(g_i(x)-b_i) \r\}$. Suppose there exists $\widetilde{x}\in\mathcal{X}$ such that \xcolor{ $g_i\l(\widetilde{x}\r)-b_i\leq-\eta,~\forall i\in\{1,2,\cdots,m\}$} for some positive constant $\eta>0$. Then, the level set 
$\mathcal{V}_{\bar\mu}=\l\{ \mu_1,\mu_2,\cdots,\mu_m\geq0,~ q(\mu)\geq q(\bar\mu)\r\}$ is bounded for any nonnegative $\bar\mu$. Furthermore, we have
$\max_{\mu\in\mathcal{V}_{\bar\mu}}  \|\mu\|_2\leq\frac{1}{\min_{1\leq i \leq m}\l\{-g_i(\widetilde{x})+b_i\r\}}\l(  f(\widetilde{x})-q(\bar\mu)   \r)$.
\end{lemma}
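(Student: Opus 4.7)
The plan is to exploit the Slater point $\widetilde{x}$ directly inside the definition of the dual function $q(\mu)$, which is the standard trick for bounding Lagrange multipliers under a Slater condition. The key observation is that since $q(\mu)$ is an infimum over $\mathcal{X}$, plugging in any particular feasible point only increases its value, and $\widetilde{x}$ is an especially useful point because Slater's condition tells us the constraint slack there is strictly negative and uniformly bounded away from $0$.

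First I would set $\gamma := \min_{1 \leq i \leq m}\{-g_i(\widetilde{x}) + b_i\}$, which is strictly positive by Slater's hypothesis (indeed $\gamma \geq \eta > 0$). Then, starting from any $\mu = (\mu_1, \ldots, \mu_m) \in \mathcal{V}_{\bar\mu}$, I would write
\begin{align*}
q(\bar\mu) \;\leq\; q(\mu) \;=\; \inf_{x \in \mathcal{X}} \Bigl\{f(x) + \sum_{i=1}^m \mu_i\bigl(g_i(x) - b_i\bigr)\Bigr\} \;\leq\; f(\widetilde{x}) + \sum_{i=1}^m \mu_i\bigl(g_i(\widetilde{x}) - b_i\bigr),
\end{align*}
where the first inequality is the defining property of the level set and the second is obtained by substituting $x = \widetilde{x}$ into the infimum. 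Now, since $\mu_i \geq 0$ for all $i$ and $g_i(\widetilde{x}) - b_i \leq -\gamma$ by definition of $\gamma$, the sum on the right is bounded above by $-\gamma \sum_{i=1}^m \mu_i = -\gamma \|\mu\|_1$.

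Chaining these inequalities gives $q(\bar\mu) \leq f(\widetilde{x}) - \gamma \|\mu\|_1$, which rearranges to $\|\mu\|_1 \leq \bigl(f(\widetilde{x}) - q(\bar\mu)\bigr)/\gamma$. Using the elementary norm inequality $\|\mu\|_2 \leq \|\mu\|_1$ (valid since all $\mu_i \geq 0$, so this is even an equality of $\ell_1$ and a bound via Cauchy-Schwarz / monotonicity), I obtain the desired bound $\|\mu\|_2 \leq \bigl(f(\widetilde{x}) - q(\bar\mu)\bigr)/\gamma$. Boundedness of $\mathcal{V}_{\bar\mu}$ follows immediately since this bound is uniform over all $\mu \in \mathcal{V}_{\bar\mu}$.

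There is no serious obstacle here; the proof is essentially a one-line manipulation once the Slater point is plugged into the dual function. The only subtle point worth flagging is that the argument requires no convexity of $f$ or $g_i$ in its statement (only that $q$ is defined as an infimum, which is always concave in $\mu$), so the lemma is genuinely about the geometry of the Lagrangian and the Slater gap $\gamma$, not about the structure of the primal problem.
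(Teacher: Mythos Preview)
Your proof is correct and is precisely the standard argument; the paper itself does not give a proof of this lemma but simply cites it as Lemma~1 of \cite{nedic2009approximate}, whose proof proceeds exactly as you describe. The only minor stylistic point is that your parenthetical justification of $\|\mu\|_2 \leq \|\mu\|_1$ is slightly muddled---this inequality holds for \emph{all} vectors regardless of sign, by expanding $\|\mu\|_1^2$---but the inequality itself is of course valid.
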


The technical importance of these two lemmas in the current context is contained in the following corollary.
\begin{corollary}\label{coro:comp}
Let $\l\{\theta^{(k)}_*\r\}_{k=1}^K$ and $\l\{\overline\theta^{(k)}_*\r\}_{k=1}^K$ be solutions to \eqref{ori-lp1}-\eqref{ori-lp2} and \eqref{relax-lp1}-\eqref{relax-lp2}, respectively. Then, the following holds
\[
\frac1T\sum_{t=0}^{T-1}\sum_{k=1}^K\l\langle \expect{\mathbf{f}_t^{(k)}}, \overline\theta_*^{(k)}  \r\rangle
\geq \frac1T\sum_{t=0}^{T-1}\sum_{k=1}^K\l\langle \expect{\mathbf{f}^{(k)}}, \theta_*^{(k)}  \r\rangle - \frac{C_1K^2\sqrt{m}\Psi^2}{\eta T}
\]
\end{corollary}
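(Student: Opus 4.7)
The plan is to prove this via a standard sensitivity analysis of linear programs using Farkas' Lemma combined with the bound on Lagrange multipliers supplied by Slater's condition. The setup is ideal: the two LPs differ only by a perturbation of $C_1 K\Psi/T$ on the right-hand side of each inequality constraint, and we already have $\eta>0$ strict feasibility from Assumption \ref{assumption:slater} (translated into the state-action polytope via \eqref{slater-2}).

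First, I would apply Lemma \ref{lemma:Farkas} to the original LP \eqref{ori-lp1}-\eqref{ori-lp2}, whose feasible set is $\mathcal{X} = \Theta^{(1)}\times\cdots\times\Theta^{(K)}$ (convex) and for which $\tilde\theta$ from \eqref{slater-2} provides a Slater point (strictly feasible with margin $\eta$). Letting $\theta_*=\{\theta_*^{(k)}\}$ denote the optimum and $\mu^*=(\mu_1^*,\ldots,\mu_m^*)$ the corresponding nonnegative Lagrange multiplier, Farkas' Lemma gives, for every $\theta\in\mathcal{X}$,
\begin{equation*}
\frac{1}{T}\sum_{t=0}^{T-1}\sum_{k=1}^K\bigl\langle\mathbb{E}[\mathbf{f}_t^{(k)}],\theta^{(k)}\bigr\rangle
+\sum_{i=1}^m\mu_i^*\sum_{k=1}^K\bigl\langle\mathbb{E}[\mathbf{g}_{i,t}^{(k)}],\theta^{(k)}\bigr\rangle
\geq \frac{1}{T}\sum_{t=0}^{T-1}\sum_{k=1}^K\bigl\langle\mathbb{E}[\mathbf{f}_t^{(k)}],\theta_*^{(k)}\bigr\rangle.
\end{equation*}
Next I would specialize this inequality to $\theta = \overline\theta_*$, the solution to the relaxed LP. Since $\overline\theta_*$ satisfies $\sum_k\langle\mathbb{E}[\mathbf{g}_{i,t}^{(k)}],\overline\theta_*^{(k)}\rangle\leq C_1K\Psi/T$ and $\mu_i^*\geq 0$, the cross term is at most $\|\mu^*\|_1 \cdot C_1K\Psi/T \leq \sqrt{m}\|\mu^*\|_2 \cdot C_1K\Psi/T$. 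Rearranging gives
\begin{equation*}
\frac{1}{T}\sum_{t=0}^{T-1}\sum_{k=1}^K\bigl\langle\mathbb{E}[\mathbf{f}_t^{(k)}],\overline\theta_*^{(k)}\bigr\rangle
\geq \frac{1}{T}\sum_{t=0}^{T-1}\sum_{k=1}^K\bigl\langle\mathbb{E}[\mathbf{f}_t^{(k)}],\theta_*^{(k)}\bigr\rangle
-\frac{\sqrt{m}\,C_1K\Psi}{T}\|\mu^*\|_2.
\end{equation*}

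The remaining task is to bound $\|\mu^*\|_2$, and for this I would invoke Lemma \ref{lemma:bound-lagrange}. Applied with $\bar\mu=\mu^*$ (so that $q(\bar\mu)$ equals the LP optimum by strong duality for LPs) and with the Slater point $\tilde\theta$ giving margin $\eta$, the lemma yields $\|\mu^*\|_2\leq \bigl(f(\tilde\theta)-q(\mu^*)\bigr)/\eta$. Both the numerator terms are bounded in absolute value by $K\Psi$ via \eqref{function-bounds} (each $|\langle\mathbb{E}[\mathbf{f}_t^{(k)}],\theta^{(k)}\rangle|\leq \Psi$ since $\theta^{(k)}$ is a probability vector), so $\|\mu^*\|_2$ is on the order of $K\Psi/\eta$. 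Substituting gives the desired $C_1K^2\sqrt{m}\,\Psi^2/(\eta T)$ bound (up to absorbing the constant).

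The only mildly delicate point is verifying the applicability of Farkas' Lemma and Lemma \ref{lemma:bound-lagrange} in the present form: $\mathcal{X}$ is a product of polyhedra, hence convex, and Slater's condition transfers exactly via \eqref{slater-2}. Everything else is a bookkeeping exercise. The main "obstacle" is simply being careful about the constant in bounding $\|\mu^*\|_2$ from the dual function evaluation; the rest follows in one line from the Farkas inequality evaluated at $\overline\theta_*$.
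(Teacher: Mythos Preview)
Your proposal is correct and follows essentially the same route as the paper: apply Farkas' Lemma (Lemma~\ref{lemma:Farkas}) to the original LP, specialize to $\overline\theta_*$, bound the cross term via the relaxed constraint, and control $\|\mu^*\|_2$ using Lemma~\ref{lemma:bound-lagrange} together with the Slater point from \eqref{slater-2}. The only cosmetic difference is that the paper invokes Lemma~\ref{lemma:bound-lagrange} with $\bar\mu=0$ (so that $q(0)=\inf_{\theta\in\mathcal{X}}f(\theta)$) whereas you take $\bar\mu=\mu^*$ and use strong duality; both choices yield the same $2K\Psi/\eta$ bound on $\|\mu^*\|_2$ and the argument is otherwise identical.
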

\begin{proof}
Take 
\begin{align*}
&f\l( \theta^{(1)},\cdots,\theta^{(K)} \r) = \frac1T\sum_{t=0}^{T-1}\sum_{k=1}^K\l\langle \expect{\mathbf{f}^{(k)}}, \theta^{(k)}  \r\rangle,  \\
&g_i\l( \theta^{(1)},\cdots,\theta^{(K)} \r) = \sum_{k=1}^K\l\langle \expect{\mathbf{g}_{i,t}^{(k)}}, \theta^{(k)}  \r\rangle,\\
&\mathcal{X} = \Theta^{(1)}\times\Theta^{(2)}\times\cdots\times\Theta^{(K)},
\end{align*}
and $b_i=0$
in Farkas' Lemma and we have the following display
\begin{equation*}
\frac1T\sum_{t=0}^{T-1}\sum_{k=1}^K\l\langle \expect{\mathbf{f}^{(k)}}, \theta^{(k)}  \r\rangle
+ \sum_{i=1}^m\mu_i\sum_{k=1}^K\l\langle \expect{\mathbf{g}_{i,t}^{(k)}}, \theta^{(k)}  \r\rangle
\geq \frac1T\sum_{t=0}^{T-1}\sum_{k=1}^K\l\langle \expect{\mathbf{f}^{(k)}}, \theta_*^{(k)}  \r\rangle,
\end{equation*}
for any $\l( \theta^{(1)},\cdots,\theta^{(K)} \r)\in\mathcal{X}$ and some $\mu_1,\mu_2,\cdots,\mu_m\geq0$. In particular, substituting $\l( \overline\theta^{(1)}_*,\cdots,\overline\theta^{(K)}_* \r)$ into the above display gives
\begin{align}
\frac1T\sum_{t=0}^{T-1}\sum_{k=1}^K\l\langle \expect{\mathbf{f}^{(k)}}, \overline\theta^{(k)}_*  \r\rangle
\geq \frac1T\sum_{t=0}^{T-1}\sum_{k=1}^K\l\langle \expect{\mathbf{f}^{(k)}}, \theta_*^{(k)}  \r\rangle
-  \sum_{i=1}^m\mu_i\sum_{k=1}^K\l\langle \expect{\mathbf{g}_{i,t}^{(k)}}, \overline\theta_*^{(k)}  \r\rangle   \nonumber\\
\geq \frac1T\sum_{t=0}^{T-1}\sum_{k=1}^K\l\langle \expect{\mathbf{f}^{(k)}}, \theta_*^{(k)}  \r\rangle
-  \frac{C_1K\Psi}{T}\sum_{i=1}^m\mu_i,  \label{inter-bound}
\end{align}
where the final inequality follows from the fact that $\l( \overline\theta^{(1)}_*,\cdots,\overline\theta^{(K)}_* \r)$ satisfies the relaxed constraint 
$\sum_{k=1}^K\l\langle \expect{\mathbf{g}_{i,t}^{(k)}}, \overline\theta_*^{(k)}  \r\rangle\leq  \frac{C_1K\Psi}{T}$ and $\mu_i\geq0,~\forall i\in\{1,2,\cdots,m\}$. Now we need to bound the magnitude of Lagrange multiplier $\l(\mu_1,\cdots,\mu_m\r)$. Note that in our scenario, 
\[
\Big|f\l( \theta^{(1)},\cdots,\theta^{(K)} \r) \Big|=\l| \frac1T\sum_{t=0}^{T-1}\sum_{k=1}^K\l\langle \expect{\mathbf{f}^{(k)}}, \theta^{(k)}  \r\rangle  \r|\leq \Psi,
\]
and the Lagrange multiplier $\mu$ is the solution to the maximization problem 
$$\max_{\mu_i\geq0,i\in\{1,2,\cdots,m\}}q(\mu),$$ 
where $q(\mu)$ is the dual function defined in Lemma \ref{lemma:bound-lagrange}.
thus, it must be in any super level set $\mathcal{V}_{\bar\mu}=\l\{ \mu_1,\mu_2,\cdots,\mu_m\geq0,~ q(\mu)\geq q(\bar\mu)\r\}$.
In particular, taking $\bar\mu = 0$ in Lemma \ref{lemma:bound-lagrange} and using Slater's condition \eqref{slater-2}, we have
there exists $\widetilde{\theta}^{(1)},\cdots,\widetilde{\theta}^{(K)}$ such that
\begin{multline*}
\sum_{i=1}^m\mu_i \leq \sqrt{m} \|\mu\|_2\leq \frac{\sqrt{m}}{\mu}\l(f\l(\widetilde{\theta}^{(1)},\cdots,\widetilde{\theta}^{(K)}\r)
-\inf_{\l(\theta^{(1)},\cdots,\theta^{(K)}\r)\in\mathcal{X}}f\l( \theta^{(1)},\cdots,\theta^{(K)} \r)\r)
\leq \frac{2\sqrt{m}\Psi K}{\eta},
\end{multline*}
where the final inequality follows from the deterministic bound of $|f(\theta^{(1)},\cdots,\theta^{(K)})|$ by $\Psi K$.
Substituting this bound into \eqref{inter-bound} gives the desired result.
\end{proof}

As a simple consequence of the above corollary, we have our final bound on the regret and constraint violation regarding any $(d_0,\Pi)\in\mathcal{G}$.
\begin{theorem}\label{thm:final-regret}
Let $\mathscr{P}$ be the sequence of randomized stationary policies resulting from the proposed algorithm with $V=\sqrt{T}$ and $\alpha=T$. Let $d_0$ be the staring state of the proposed algorithm.
For any randomized stationary policy $\Pi$ starting from the state $d_0$ such that 
$(d_0,\Pi)\in\mathcal{G}$, we have
\begin{align*}
&F_T(d_0,\mathscr{P}) - F_T(d_0,\Pi)\leq \mathcal{O}\l(m^{3/2}K^2\sum_{k=1}^K\l|\mathcal{A}^{(k)}\r|\l|\mathcal{S}^{(k)}\r|\cdot\sqrt{T}\r),\\
&G_{i,T}(d_0,\mathscr{P})\leq \mathcal{O}\l(m^{3/2}K^2\sum_{k=1}^K\l|\mathcal{A}^{(k)}\r|\l|\mathcal{S}^{(k)}\r|\cdot\sqrt{T}\r),~i=1,2,\cdots,m.
\end{align*}
\end{theorem}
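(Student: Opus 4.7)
The plan is to mimic the decomposition in the proof of Theorem \ref{thm:final-1} and insert an extra perturbation step handled by Corollary \ref{coro:comp}. Fix a policy $\Pi$ with $(d_0,\Pi)\in\mathcal{G}$, and let $\{\hat\theta^{(k)}\}_{k=1}^K$ denote its stationary state-action probabilities furnished by Lemma \ref{lemma:prod-chain}. Because $\Pi$ is run from an arbitrary $d_0$ rather than from its stationary distribution, these probabilities need not lie in the imaginary constraint set $\overline{\mathcal{G}}$, so Theorem \ref{thm:final-1} cannot be applied directly. Lemma \ref{lemma:perturbed} supplies the two facts that get around this: $\{\hat\theta^{(k)}\}$ sits inside the relaxed set $\overline{\mathcal{G}}^+$, and the accumulated true penalty of $\Pi$ differs from its stationary version by at most $C_1K\Psi$.

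Let $\{\theta_*^{(k)}\}\in\overline{\mathcal{G}}$ be any optimum of the unperturbed LP \eqref{ori-lp1}--\eqref{ori-lp2}. The plan is to split
\[
F_T(d_0,\mathscr{P}) - F_T(d_0,\Pi) = A + B + C + D,
\]
where
\[
A = F_T(d_0,\mathscr{P}) - \sum_{t=0}^{T-1}\sum_{k=1}^K\mathbb{E}\bigl[\langle\mathbf{f}_t^{(k)},\theta_t^{(k)}\rangle\bigr],
\]
\[
B = \sum_{t=0}^{T-1}\sum_{k=1}^K\mathbb{E}\bigl[\langle\mathbf{f}_t^{(k)},\theta_t^{(k)}\rangle\bigr] - \sum_{t=0}^{T-1}\sum_{k=1}^K\langle\mathbb{E}[\mathbf{f}_t^{(k)}],\theta_*^{(k)}\rangle,
\]
\[
C = \sum_{t=0}^{T-1}\sum_{k=1}^K\langle\mathbb{E}[\mathbf{f}_t^{(k)}],\theta_*^{(k)}\rangle - \sum_{t=0}^{T-1}\sum_{k=1}^K\langle\mathbb{E}[\mathbf{f}_t^{(k)}],\hat\theta^{(k)}\rangle,
\]
\[
D = \sum_{t=0}^{T-1}\sum_{k=1}^K\langle\mathbb{E}[\mathbf{f}_t^{(k)}],\hat\theta^{(k)}\rangle - F_T(d_0,\Pi).
\]
Term $A$ is $O(\sqrt{T})$ by the Markov mixing argument used for term (I) in the proof of Theorem \ref{thm:final-1}, which rests on Lemma \ref{lemma:distance-dist} and the slow-update condition. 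Term $B$ is $O(\sqrt{T})$ by Theorem \ref{thm:stationary-regret} applied with the feasible comparator $\{\theta_*^{(k)}\}\in\overline{\mathcal{G}}$. Term $D$ is bounded by $C_1K\Psi$ via \eqref{diff-1} of Lemma \ref{lemma:perturbed}. The only new piece is $C$: since $\{\hat\theta^{(k)}\}\in\overline{\mathcal{G}}^+$, the objective value at $\{\hat\theta^{(k)}\}$ is an upper bound on the optimum of the relaxed LP \eqref{relax-lp1}--\eqref{relax-lp2}, so Corollary \ref{coro:comp} yields $C\leq C_1K^2\sqrt{m}\Psi^2/\eta$. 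Summing the four bounds produces the claimed $O(m^{3/2}K^2\sum_k|\mathcal{A}^{(k)}||\mathcal{S}^{(k)}|\sqrt{T})$ regret.

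For the constraint violation no perturbation is needed: the same Markov mixing analysis bounds $G_{i,T}(d_0,\mathscr{P})-\sum_t\sum_k\mathbb{E}[\langle\mathbf{g}_{i,t}^{(k)},\theta_t^{(k)}\rangle]$ by $O(\sqrt{T})$, and Theorem \ref{thm:constraint-violation} bounds the imaginary sum by $O(\sqrt{T})$, exactly as for terms (IV) and (V) in the proof of Theorem \ref{thm:final-1}. The main obstacle, already handled inside Corollary \ref{coro:comp}, is ensuring that an $O(1/T)$ relaxation of the feasible set does not inflate the LP optimum by more than $O(1/T)$; this is where Slater's condition (Assumption \ref{assumption:slater}) is indispensable, as Farkas' Lemma combined with Lemma \ref{lemma:bound-lagrange} bounds the Lagrange multipliers by $O(\sqrt{m}\Psi K/\eta)$, which keeps the term $C$ subdominant to the $\sqrt{T}$ regret coming from the mixing and the stationary-state analyses.
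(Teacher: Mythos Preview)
Your proposal is correct and follows essentially the same route as the paper. The only organizational difference is that the paper packages your terms $A$ and $B$ together as a single application of Theorem~\ref{thm:final-1} with comparator $\Pi_*$ (the policy realizing the unperturbed LP optimum $\{\theta_*^{(k)}\}$), and then handles your terms $C$ and $D$ as the additional perturbation $F_T(d_{\Pi_*},\Pi_*)-F_T(d_0,\Pi)$; you instead unfold Theorem~\ref{thm:final-1} into its two ingredients and write the four pieces explicitly, but the lemmas invoked and the resulting bounds are identical.
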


\begin{proof}
Let $\Pi_*$ be the randomized stationary policy corresponding to the solution $\{\theta_*^{(k)}\}_{k=1}^K$ to \eqref{ori-lp1}-\eqref{ori-lp2} and let $\Pi$ be any randomized stationary policy such that $(d_0,\Pi)\in\mathcal{G}$. Since $G_{i,T}(d_{\Pi_*},\Pi_*) =  \sum_{t=0}^{T-1}\sum_{k=1}^K\l\langle\expect{\mathbf{g}_{i,t}},\theta^{(k)}_*\r\rangle\leq0$, it follows 
$(d_{\Pi_*},\Pi_*)\in\mathcal{G}$.
By Theorem \ref{thm:final-1}, we know that
\[
F_T(d_0,\mathscr{P}) - F_T(d_{\Pi_*},\Pi_*)\leq \mathcal{O}\l(m^{3/2}K^2\sum_{k=1}^K\l|\mathcal{A}^{(k)}\r|\l|\mathcal{S}^{(k)}\r|\cdot\sqrt{T}\r),
\]
and $G_{i,T}(d_0,\mathscr{P})$ satisfies the bound in the statement.
It is then enough to bound $F_T(d_{\Pi_*},\Pi_*) - F_T(d_{0},\Pi)$. We split it in to two terms:
\[
F_T(d_{\Pi_*},\Pi_*) - F_T(d_{0},\Pi)\leq \underbrace{F_T(d_{\Pi_*},\Pi_*) - F_T(d_{\Pi},\Pi)}_{\text{(I)}} + \underbrace{F_T(d_{\Pi},\Pi) - F_T(d_{0},\Pi)}_{\text{(II)}}.
\]
By \eqref{diff-1} in Lemma \ref{lemma:perturbed}, the term (II) is bounded by $C_1K\Psi$. It remains to bound the first term. Since $(d_0,\Pi)\in\mathcal{G}$, by Lemma \ref{lemma:perturbed}, the corresponding state-action probabilities $\{\theta^{(k)}\}_{k=1}^K$ of $\Pi$ satisfies $\sum_{k=1}^K\l\langle\expect{\mathbf{g}_{i,t}},\theta^{(k)}\r\rangle\leq C_1K\Psi/T$ and $\{\theta^{(k)}\}_{k=1}^K$ is feasible for \eqref{relax-lp1}-\eqref{relax-lp2}. Since $\{\overline{\theta}^{(k)}_*\}_{k=1}^K$ is the solution to \eqref{relax-lp1}-\eqref{relax-lp2}, we must have
\[ F_T(d_{\Pi},\Pi) = \sum_{t=0}^{T-1}\sum_{k=1}^K\l\langle \expect{\mathbf{f}_t^{(k)}}, \theta^{(k)}  \r\rangle 
\geq \sum_{t=0}^{T-1}\sum_{k=1}^K\l\langle \expect{\mathbf{f}_t^{(k)}}, \overline\theta_*^{(k)}  \r\rangle \]
On the other hand, by Corollary \ref{coro:comp},
\begin{align*}
\sum_{t=0}^{T-1}\sum_{k=1}^K\l\langle \expect{\mathbf{f}_t^{(k)}}, \overline\theta_*^{(k)}  \r\rangle
\geq& \sum_{t=0}^{T-1}\sum_{k=1}^K\l\langle \expect{\mathbf{f}^{(k)}}, \theta_*^{(k)}  \r\rangle - \frac{C_1K^2\sqrt{m}\Psi^2}{\eta}\\
=& F_T(d_{\Pi_*},\Pi_*) -  \frac{C_1K^2\sqrt{m}\Psi^2}{\eta}.
\end{align*}
Combining the above two displays gives $\text{(I)}\leq \frac{C_1K^2\sqrt{m}\Psi^2}{\eta}$ and the proof is finished.
\end{proof}


\bibliographystyle{unsrt}
\bibliography{OCMDP}

\begin{thebibliography}{10}

\bibitem{nyiso}
New york iso open access pricing data.
\newblock {\em http://www.nyiso.com/}.

\bibitem{gandhi2013dynamic}
Anshul Gandhi.
\newblock {\em Dynamic server provisioning for data center power management}.
\newblock PhD thesis, Carnegie Mellon University, 2013.

\bibitem{weidatacenter17}
Xiaohan Wei and Michael Neely.
\newblock Data center server provision: Distributed asynchronous control for
  coupled renewal systems.
\newblock {\em IEEE/ACM Transactions on Networking}, PP(99):1--15, 2017.

\bibitem{gandhi2013exact}
Anshul Gandhi, Sherwin Doroudi, Mor Harchol-Balter, and Alan Scheller-Wolf.
\newblock Exact analysis of the m/m/k/setup class of markov chains via
  recursive renewal reward.
\newblock In {\em ACM SIGMETRICS Performance Evaluation Review}, volume~41,
  pages 153--166. ACM, 2013.

\bibitem{lin2013dynamic}
Minghong Lin, Adam Wierman, Lachlan~LH Andrew, and Eno Thereska.
\newblock Dynamic right-sizing for power-proportional data centers.
\newblock {\em IEEE/ACM Transactions on Networking (TON)}, 21(5):1378--1391,
  2013.

\bibitem{urgaonkar2011optimal}
Rahul Urgaonkar, Bhuvan Urgaonkar, Michael~J Neely, and Anand Sivasubramaniam.
\newblock Optimal power cost management using stored energy in data centers.
\newblock {\em ACM SIGMETRICS Performance Evaluation Review}, 39(1):181--192,
  2011.

\bibitem{hazan2016introduction}
Elad Hazan et~al.
\newblock Introduction to online convex optimization.
\newblock {\em Foundations and Trends{\textregistered} in Optimization},
  2(3-4):157--325, 2016.

\bibitem{zinkevich2003online}
Martin Zinkevich.
\newblock Online convex programming and generalized infinitesimal gradient
  ascent.
\newblock In {\em Proceedings of the 20th International Conference on Machine
  Learning (ICML-03)}, pages 928--936, 2003.

\bibitem{hazan07ML}
Elad Hazan, Amit Agarwal, and Satyen Kale.
\newblock Logarithmic regret algorithms for online convex optimization.
\newblock {\em Machine Learning}, 69:169--192, 2007.

\bibitem{mahdavi2012trading}
Mehrdad Mahdavi, Rong Jin, and Tianbao Yang.
\newblock Trading regret for efficiency: online convex optimization with long
  term constraints.
\newblock {\em Journal of Machine Learning Research}, 13(Sep):2503--2528, 2012.

\bibitem{jenatton2016adaptive}
Rodolphe Jenatton, Jim Huang, and C{\'e}dric Archambeau.
\newblock Adaptive algorithms for online convex optimization with long-term
  constraints.
\newblock In {\em International Conference on Machine Learning}, pages
  402--411, 2016.

\bibitem{yu2016low}
Hao Yu and Michael~J Neely.
\newblock A low complexity algorithm with $ o (\sqrt{T})$ regret and finite
  constraint violations for online convex optimization with long term
  constraints.
\newblock {\em arXiv preprint arXiv:1604.02218}, 2016.

\bibitem{chen2017online}
Tianyi Chen, Qing Ling, and Georgios~B Giannakis.
\newblock An online convex optimization approach to dynamic network resource
  allocation.
\newblock {\em arXiv preprint arXiv:1701.03974}, 2017.

\bibitem{neely2017online}
Michael~J Neely and Hao Yu.
\newblock Online convex optimization with time-varying constraints.
\newblock {\em arXiv preprint arXiv:1702.04783}, 2017.

\bibitem{hao2017onlinestochastic}
Hao Yu, Michael Neely, and Xiaohan Wei.
\newblock Online convex optimization with stochastic constraints.
\newblock {\em arXiv preprint arXiv:1708.03741}, 2017.

\bibitem{even2009online}
Eyal Even-Dar, Sham~M Kakade, and Yishay Mansour.
\newblock Online markov decision processes.
\newblock {\em Mathematics of Operations Research}, 34(3):726--736, 2009.

\bibitem{yu2009markov}
Jia~Yuan Yu, Shie Mannor, and Nahum Shimkin.
\newblock Markov decision processes with arbitrary reward processes.
\newblock {\em Mathematics of Operations Research}, 34(3):737--757, 2009.

\bibitem{guan2014online}
Peng Guan, Maxim Raginsky, and Rebecca~M Willett.
\newblock Online markov decision processes with kullback--leibler control cost.
\newblock {\em IEEE Transactions on Automatic Control}, 59(6):1423--1438, 2014.

\bibitem{dick2014online}
Travis Dick, Andras Gyorgy, and Csaba Szepesvari.
\newblock Online learning in markov decision processes with changing cost
  sequences.
\newblock In {\em Proceedings of the 31st International Conference on Machine
  Learning (ICML-14)}, pages 512--520, 2014.

\bibitem{neu2010online}
Gergely Neu, Andras Antos, Andr{\'a}s Gy{\"o}rgy, and Csaba Szepesv{\'a}ri.
\newblock Online markov decision processes under bandit feedback.
\newblock In {\em Advances in Neural Information Processing Systems}, pages
  1804--1812, 2010.

\bibitem{altman1999constrained}
Eitan Altman.
\newblock {\em Constrained Markov decision processes}, volume~7.
\newblock CRC Press, 1999.

\bibitem{neely2011online}
Michael~J Neely.
\newblock Online fractional programming for markov decision systems.
\newblock In {\em Communication, Control, and Computing (Allerton), 2011 49th
  Annual Allerton Conference on}, pages 353--360. IEEE, 2011.

\bibitem{caramanis2014efficient}
Constantine Caramanis, Nedialko~B Dimitrov, and David~P Morton.
\newblock Efficient algorithms for budget-constrained markov decision
  processes.
\newblock {\em IEEE Transactions on Automatic Control}, 59(10):2813--2817,
  2014.

\bibitem{boutilier2016budget}
Craig Boutilier and Tyler Lu.
\newblock Budget allocation using weakly coupled, constrained markov decision
  processes.
\newblock In {\em UAI}, 2016.

\bibitem{wei2016theory}
Xiaohan Wei and Michael~J Neely.
\newblock On the theory and application of distributed asynchronous
  optimization over weakly coupled renewal systems.
\newblock {\em arXiv preprint arXiv:1608.00195}, 2016.

\bibitem{bertsekas1995dynamic}
Dimitri~P Bertsekas.
\newblock {\em Dynamic programming and optimal control}, volume~1.
\newblock Athena scientific Belmont, MA, 1995.

\bibitem{sutton1998reinforcement}
Richard~S Sutton and Andrew~G Barto.
\newblock {\em Reinforcement learning: An introduction}, volume~1.
\newblock MIT press Cambridge, 1998.

\bibitem{lattimore2013sample}
Tor Lattimore, Marcus Hutter, Peter Sunehag, et~al.
\newblock The sample-complexity of general reinforcement learning.
\newblock In {\em Proceedings of the 30th International Conference on Machine
  Learning}. Journal of Machine Learning Research, 2013.

\bibitem{chen2016stochastic}
Yichen Chen and Mengdi Wang.
\newblock Stochastic primal-dual methods and sample complexity of reinforcement
  learning.
\newblock {\em arXiv preprint arXiv:1612.02516}, 2016.

\bibitem{nedic2009approximate}
Angelia Nedi{\'c} and Asuman Ozdaglar.
\newblock Approximate primal solutions and rate analysis for dual subgradient
  methods.
\newblock {\em SIAM Journal on Optimization}, 19(4):1757--1780, 2009.

\bibitem{bertsekas2009convex}
Dimitri~P Bertsekas.
\newblock {\em Convex optimization theory}.
\newblock Athena Scientific Belmont, 2009.

\bibitem{fox1966markov}
Bennett Fox.
\newblock Markov renewal programming by linear fractional programming.
\newblock {\em SIAM Journal on Applied Mathematics}, 14(6):1418--1432, 1966.

\bibitem{YuNeely17SIOPT}
Hao Yu and Michael~J. Neely.
\newblock A simple parallel algorithm with an ${O}(1/t)$ convergence rate for
  general convex programs.
\newblock {\em SIAM Journal on Optimization}, 27(2):759--783, 2017.

\bibitem{hajek1982hitting}
Bruce Hajek.
\newblock Hitting-time and occupation-time bounds implied by drift analysis
  with applications.
\newblock {\em Advances in Applied probability}, 14(3):502--525, 1982.

\bibitem{eryilmaz2012asymptotically}
Atilla Eryilmaz and R~Srikant.
\newblock Asymptotically tight steady-state queue length bounds implied by
  drift conditions.
\newblock {\em Queueing Systems}, 72(3-4):311--359, 2012.

\bibitem{wei2016online}
Xiaohan Wei and Michael~J Neely.
\newblock Online constrained optimization over time varying renewal systems: An
  empirical method.
\newblock {\em arXiv preprint arXiv:1606.03463}, 2016.

\bibitem{LevinPeresWilmer2006}
David~A. Levin, Yuval Peres, and Elizabeth~L. Wilmer.
\newblock {\em {Markov chains and mixing times}}.
\newblock American Mathematical Society, 2006.

\end{thebibliography}

\appendix

\section{Additional proofs}

\subsection{Missing proofs in Section \ref{sec:prelim-thm}}
We prove Lemma \ref{lemma:mixing1} and \ref{lemma:prod-chain} in this section.

\begin{proof}[Proof of Lemma \ref{lemma:mixing1}]
For simplicity of notations, we drop the dependencies on $k$ throughout this proof.
We first show that for any $r\geq \widehat{r}$, where  $\widehat{r}$ is specified in Assumption \ref{assumption-1},
$
\mathbf{P}_{\pi_1}\mathbf{P}_{\pi_2}\cdots \mathbf{P}_{\pi_r}
$
is a strictly positive stochastic matrix.

Since the MDP is finite state with a finite action set, the set of all pure policies (Definition \ref{def:pp}) is finite. Let $\mathbf{P}_1,~\mathbf{P}_2,\cdots,~\mathbf{P}_N$ be probability transition matrices corresponding to these pure policies. Consider any sequence of randomized stationary policies $\pi_1,\cdots,\pi_r$. Then, it follows 
their transition matrices can be expressed as convex combinations of pure policies, i.e.
\[
\mathbf{P}_{\pi_1} = \sum_{i=1}^N\alpha^{(1)}_i\mathbf{P}_i,~~
\mathbf{P}_{\pi_2} = \sum_{i=1}^N\alpha^{(2)}_i\mathbf{P}_i,
~~\cdots,
\mathbf{P}_{\pi_r} = \sum_{i=1}^N\alpha^{(r)}_i\mathbf{P}_i,
\]
where $\sum_{i=1}^N\alpha^{(j)}_i = 1,~\forall j\in\{1,2,\cdots,r\}$ and $\alpha^{(j)}_i\geq0$.
Thus, we have the following display
\begin{align}
\mathbf{P}_{\pi_1}\mathbf{P}_{\pi_2}\cdots \mathbf{P}_{\pi_r}
=&\l( \sum_{i=1}^N\alpha^{(1)}_i\mathbf{P}_i\r)
\l( \sum_{i=1}^N\alpha^{(2)}_i\mathbf{P}_i\r)\cdots\l( \sum_{i=1}^N\alpha^{(r)}_i\mathbf{P}_i\r) \nonumber\\
=&\sum_{(i_1,\cdots,i_r)\in\mathcal{G}_r}
\alpha_{i_1}^{(1)}\cdots \alpha_{i_r}^{(r)}
\cdot     \mathbf{P}_{i_1}\mathbf{P}_{i_2}\cdots\mathbf{P}_{i_r},  \label{convex-comb}
\end{align}
where $\mathcal{G}_r$ ranges over all $N^r$ configurations.

Since $\l(\sum_{i=1}^N\alpha^{(1)}_i\r)\cdots\l(\sum_{i=1}^N\alpha^{(r)}_i\r) = 1$, it follows \eqref{convex-comb} is a convex combination of all possible sequences
$ \mathbf{P}_{i_1}\mathbf{P}_{i_2}\cdots\mathbf{P}_{i_r}$. By assumption \ref{assumption-1}, we have  $\mathbf{P}_{i_1}\mathbf{P}_{i_2}\cdots\mathbf{P}_{i_r}$ is strictly positive for any $(i_1,\cdots,i_r)\in\mathcal{G}_r$, and there exists a universal lower bound $\delta>0$ of all entries of $\mathbf{P}_{i_1}\mathbf{P}_{i_2}\cdots \mathbf{P}_{i_r}$ ranging over all configurations in $(i_1,\cdots,i_r)\in\mathcal{G}_r$.
This implies $\mathbf{P}_{\pi_1}\mathbf{P}_{\pi_2}\cdots \mathbf{P}_{\pi_r}$ is also strictly positive with the same lower bound $\delta>0$ for any sequences of randomized stationary policies $\pi_1,\cdots,\pi_r$.

Now, we proceed to prove the mixing bound. Choose $r=\widehat{r}$ and  we can
decompose any $\mathbf{P}_{\pi_1}\mathbf{P}_{\pi_2}\cdots \mathbf{P}_{\pi_r}$ as follows:
$$
\mathbf{P}_{\pi_1}\cdots \mathbf{P}_{\pi_r} = \delta\mathbf{\Pi} + (1-\delta)\mathbf{Q},
$$
where $\mathbf{\Pi}$ has each entry equal to $1/\l|\mathcal{S}\r|$ (recall that $\l|\mathcal{S}\r|$ is the number of states which equals the size of the matrix) and $\mathbf{Q}$ depends on $\pi_1,\cdots,\pi_r$. Then, $\mathbf{Q}$ is also a stochastic matrix (nonnegative and row sum up to 1) because both $\mathbf{P}_{\pi_1}\cdots \mathbf{P}_{\pi_r}$ and 
 $\mathbf{\Pi}$ are stochastic matrices. Thus, for any two distribution vectors $d_1$ and $d_2$, we have
 \begin{align*}
 \l(d_1-d_2\r)\mathbf{P}_{\pi_1}\cdots \mathbf{P}_{\pi_r} 
 =  \delta\l(d_1-d_2\r)\mathbf{\Pi} + (1-\delta)\l(d_1-d_2\r)\mathbf{Q}
 =(1-\delta)\l(d_1-d_2\r)\mathbf{Q},
 \end{align*}
 where we use the fact that for distribution vectors
 $$\l(d_1-d_2\r)\mathbf{\Pi} = \frac{1}{\l| \mathcal{S} \r|}\mathbf{1} - \frac{1}{\l| \mathcal{S}\r|}\mathbf{1} = 0.$$
Since $\mathbf{Q}$ is a stochastic matrix, it is non-expansive on $\ell_1$-norm, namely, for any vector $x$, $\|x\mathbf{Q}\|_1\leq \|x\|_1$. To see this, simply compute
\begin{multline}\label{non-expansive}
\|x\mathbf{Q}\|_1=\sum_{j=1}^{\l| \mathcal{S} \r|}\l| \sum_{i=1}^{\l| \mathcal{S} \r|}x_iQ_{ij} \r|
\leq\sum_{j=1}^{\l| \mathcal{S}\r|}  \sum_{i=1}^{\l| \mathcal{S}\r|}   \l|x_iQ_{ij} \r|
=\sum_{j=1}^{\l| \mathcal{S}\r|}  \sum_{i=1}^{\l| \mathcal{S}\r|}   \l|x_i \r|Q_{ij} = \sum_{i=1}^{\l| \mathcal{S}\r|}   \l|x_i \r| = \|x\|_1.
\end{multline}
Overall, we obtain,
\[
 \l\|\l(d_1-d_2\r)\mathbf{P}_{\pi_1}\cdots \mathbf{P}_{\pi_r} \r\|_1=  (1-\delta)\l\|\l(d_1-d_2\r)\mathbf{Q}\r\|_1
 \leq (1- \delta)\l\|d_1-d_2\r\|_1.
\]
We can then take $\tau = -\frac{1}{\log\l(1 - \delta\r)}$ to finish the proof. 
\end{proof}

\begin{proof}[Proof of Lemma \ref{lemma:prod-chain}]
Since the probability transition matrix of any randomized stationary policy is a convex combination of those of pure policies,
it is enough to show that the product MDP is irreducible and aperiodic under any joint pure policy.
For simplicity, let $\mathbf{s}_t = \l(s^{(1)},\cdots,s^{(K)}\r)$ and $\mathbf{a}_t = \l(a^{(1)},\cdots,a^{(K)}\r)$. Consider any joint pure policy $\Pi$ which select a fixed joint action 
$\mathbf{a}\in\mathcal{A}^{(1)}\times\cdots\times\mathcal{A}^{(K)}$ given a joint state $\mathbf{s}\in\mathcal{S}^{(1)}\times\cdots\times\mathcal{S}^{(K)}$, with probability 1.
By Assumption \ref{assumption:indep-trans}, we have
\begin{align}
&Pr\l(  s^{(1)}_{t+1},\cdots,s^{(K)}_{t+1}\l| s^{(1)}_{t},\cdots,s^{(K)}_{t},a^{(1)}_t,\cdots, a^{(K)}_t  \r.\r) \nonumber\\
=&Pr\l(  s^{(1)}_{t+1}\l| s^{(1)}_{t},\cdots,s^{(K)}_{t},a^{(1)}_t,\cdots, a^{(K)}_t, s^{(2)}_{t+1},\cdots,s^{(K)}_{t+1}  \r.\r)
 Pr\l(  s^{(2)}_{t+1},\cdots,s^{(K)}_{t+1}\l| s^{(1)}_{t},\cdots,s^{(K)}_{t},a^{(1)}_t,\cdots, a^{(K)}_t  \r.\r)  \nonumber\\
=&Pr\l(  s^{(1)}_{t+1}\l| s^{(1)}_{t},a^{(1)}_t  \r.\r)Pr\l(  s^{(2)}_{t+1},\cdots,s^{(K)}_{t+1}\l| s^{(1)}_{t},\cdots,s^{(K)}_{t},a^{(1)}_t,\cdots, a^{(K)}_t  \r.\r) \nonumber\\
=&\cdots=  \prod_{k=1}^{K-1}Pr\l(  s^{(k)}_{t+1}\l| s^{(k)}_{t},a^{(k)}_t  \r.\r)
\cdot Pr\l(  s^{(K)}_{t+1}\l| s^{(1)}_{t},\cdots,s^{(K)}_{t},a^{(1)}_t,\cdots, a^{(K)}_t  \r.\r)
= \prod_{k=1}^{K}Pr\l(  s^{(k)}_{t+1}\l| s^{(k)}_{t},a^{(k)}_t  \r.\r), \label{iter-expectation}
\end{align}
where the second equality follows from the independence relation in Assumption \ref{assumption:indep-trans}. Thus, we obtain the equality,
\[
Pr(\mathbf{s}_{t+1}=\mathbf{s}' \big| \mathbf{s}_t=\mathbf{s},\mathbf{a}_t=\mathbf{a})
= \prod_{k=1}^{K}Pr\l(  s^{(k)}_{t+1} = \tilde{s}^{(k)}\l| s^{(k)}_{t} = s^{(k)},a^{(k)}_t = a^{(k)}  \r.\r),
\]
Then, the one step transition probability between any two states
$\mathbf{s},\tilde{\mathbf{s}}\in\mathcal{S}^{(1)}\times\cdots\times\mathcal{S}^{(K)}$
can be computed as
\begin{align*}
&Pr(\mathbf{s}_{t+1}=\tilde{\mathbf{s}} \big| \mathbf{s}_t = \mathbf{s}) \\
=& \sum_{\mathbf{a}}Pr(\mathbf{s}_{t+1}=\tilde{\mathbf{s}} \big| \mathbf{s}_t=\mathbf{s},\mathbf{a}_t=\mathbf{a})
\cdot Pr(\mathbf{a}_t=\mathbf{a} \big|  \mathbf{s}_t=\mathbf{s})\\
=&\prod_{k=1}^{K}Pr\l(  s^{(k)}_{t+1} = \tilde{s}^{(k)}\l| s^{(k)}_{t} = s^{(k)},a^{(k)}_t = a^{(k)}  \r.\r)
\cdot Pr(\mathbf{a}_t=\mathbf{a} \big|  \mathbf{s}_t=\mathbf{s})\\
=&\prod_{k=1}^{K} P_{a^{(k)}(\mathbf{s})}\l(s^{(k)},\tilde{s}^{(k)}\r),
\end{align*}
where the notation $a^{(k)}(\mathbf{s})$ denotes a fixed mapping from product state space $\mathcal{S}^{(1)}\times\cdots\times\mathcal{S}^{(K)}$ to an individual action space 
$\mathcal{A}^{(k)}$ resulting from the pure policy,
$P_{a^{(k)}(\mathbf{s})}\l(s^{(k)},\tilde{s}^{(k)}\r)$ is the Markov transition probability from state $s^{(k)}$ to $\tilde{s}^{(k)}$ under the action $a^{(k)}(\mathbf{s})$. One can then further compute the $r$ ($r\geq2$) step transition probability from between any two states
$\mathbf{s},\tilde{\mathbf{s}}\in\mathcal{S}^{(1)}\times\cdots\times\mathcal{S}^{(K)}$ as
\begin{align}
&Pr(\mathbf{s}_{t+r}=\tilde{\mathbf{s}} \big| \mathbf{s}_t = \mathbf{s}) \nonumber\\
=&\sum_{\mathbf{s}_{t+r-1}}\cdots\sum_{\mathbf{s}_{t+1}}\prod_{k=1}^{K} P_{a^{(k)}(\mathbf{s})}\l(s^{(k)},s_{t+1}^{(k)}\r)\cdot
\prod_{k=1}^{K} P_{a^{(k)}(\mathbf{s}_{t+1})}\l(s_{t+1}^{(k)},s_{t+2}^{(k)}\r)\cdots
\prod_{k=1}^{K} P_{a^{(k)}(\mathbf{s}_{t+r-1})}\l(s_{t+r-1}^{(k)},\tilde{s}^{(k)}\r)     \nonumber\\
=&\sum_{\mathbf{s}_{t+r-1}}\cdots\sum_{\mathbf{s}_{t+1}}\prod_{k=1}^{K} P_{a^{(k)}(\mathbf{s})}\l(s^{(k)},s_{t+1}^{(k)}\r)\cdot
P_{a^{(k)}(\mathbf{s}_{t+1})}\l(s_{t+1}^{(k)},s_{t+2}^{(k)}\r)\cdots P_{a^{(k)}(\mathbf{s}_{t+r-1})}\l(s_{t+r-1}^{(k)},\tilde{s}^{(k)}\r). \label{lump-sum}
\end{align}

For any $k\in\{1,2,\cdots,K\}$, the term 
$$P_{a^{(k)}(\mathbf{s})}\l(s^{(k)},s_{t+1}^{(k)}\r)\cdot
P_{a^{(k)}(\mathbf{s}_{t+1})}\l(s_{t+1}^{(k)},s_{t+2}^{(k)}\r)\cdots P_{a^{(k)}(\mathbf{s}_{t+r-1})}\l(s_{t+r-1}^{(k)},\tilde{s}^{(k)}\r)$$
denotes the probability of moving from $s^{(k)}$ to $\tilde{s}^{(k)}$ along a certain path under a certain sequence of fixed decisions 
$a^{(k)}(\mathbf{s}),~a^{(k)}(\mathbf{s}_{t+1}),~\cdots,~a^{(k)}(\mathbf{s}_{t+r-1})$. 
Let $\mathbf{s}^{(k)} = \l( s_{t+1}^{(k)},s_{t+2}^{(k)},\cdots,s_{t+r-1}^{(k)} \r)\in\mathcal{S}^{(k)}\times\cdots\times\mathcal{S}^{(k)},~k\in\{1,2,\cdots,K\}$ be the state path of k-th MDP.
One can instead changing the order of summation in \eqref{lump-sum} and sum over state paths of each MDP as follows:
\begin{align*}
\eqref{lump-sum} &= \sum_{\mathbf{s}^{(K)}}\cdots\sum_{\mathbf{s}^{(1)}}
\prod_{k=1}^{K} P_{a^{(k)}(\mathbf{s})}\l(s^{(k)},s_{t+1}^{(k)}\r)\cdot
P_{a^{(k)}(\mathbf{s}_{t+1})}\l(s_{t+1}^{(k)},s_{t+2}^{(k)}\r)\cdots P_{a^{(k)}(\mathbf{s}_{t+r-1})}\l(s_{t+r-1}^{(k)},\tilde{s}^{(k)}\r)
\end{align*}
We would like to exchange the order of the product and the sums so that we can take the path sum over each individual MDP respectively. However, the problem is that the transition probabilities are coupled through the actions. 
The idea to proceed is to first apply a ``hard'' decoupling by taking the infimum of transition probabilities of each MDP over all pure policies, and use Assumption \ref{assumption-1}, to bound the transition probability from below uniformly. We have
\begin{align*}
\eqref{lump-sum}\geq
&\inf_{\mathbf{s}^{(1)}}\sum_{\mathbf{s}^{(K)}}\cdots\sum_{\mathbf{s}^{(2)}}\prod_{k=2}^{K}P_{a^{(k)}(\mathbf{s})}\l(s^{(k)},s_{t+1}^{(k)}\r)\cdots P_{a^{(k)}(\mathbf{s}_{t+r-1})}\l(s_{t+r-1}^{(k)},\tilde{s}^{(k)}\r)\\
&\cdot\inf_{\mathbf{s}^{(j)},~j\neq1}\sum_{\mathbf{s}^{(1)}} P_{a^{(1)}(\mathbf{s})}\l(s^{(1)},s_{t+1}^{(1)}\r)\cdots P_{a^{(1)}(\mathbf{s}_{t+r-1})}\l(s_{t+r-1}^{(1)},\tilde{s}^{(1)}\r)\\
\geq&\inf_{\mathbf{s}^{(1)}}\sum_{\mathbf{s}^{(K)}}\cdots\sum_{\mathbf{s}^{(2)}}\prod_{k=2}^{K}P_{a^{(k)}(\mathbf{s})}\l(s^{(k)},s_{t+1}^{(k)}\r)\cdots P_{a^{(k)}(\mathbf{s}_{t+r-1})}\l(s_{t+r-1}^{(k)},\tilde{s}^{(k)}\r)\\
&\cdot\inf_{\pi_1^{(1)},\cdots,\pi_r^{(1)}}\sum_{\mathbf{s}^{(1)}} P_{\pi_1^{(1)}}\l(s^{(1)},s_{t+1}^{(1)}\r)\cdots P_{\pi_r^{(1)}}\l(s_{t+r-1}^{(1)},\tilde{s}^{(1)}\r),
\end{align*}
where $\pi_1^{(1)},\cdots,\pi_r^{(1)}$ range over all pure policies, and the second inequality follows from the fact that \textit{fix any path of other MDPs}  (i.e. $\mathbf{s}^{(j)},~j\neq1$), the term 
$$\sum_{\mathbf{s}^{(1)}} P_{a^{(1)}(\mathbf{s})}\l(s^{(1)},s_{t+1}^{(1)}\r)\cdots P_{a^{(1)}(\mathbf{s}_{t+r-1})}\l(s_{t+r-1}^{(k)},\tilde{s}^{(1)}\r)$$
is the probability of reaching $\tilde{s}^{(1)}$ from $s^{(1)}$ in $r$ steps using a sequence of actions
$a^{(1)}(\mathbf{s}^{(1)}),\cdots,a^{(1)}(\mathbf{s}^{(1)}_{t+r-1})$, where each action is a deterministic function of the previous state at the 1-st MDP only. Thus, it dominates the infimum over all sequences of pure policies $\pi_1^{(1)},\cdots,\pi_r^{(1)}$ on this MDP. Similarly, we can decouple the rest of the sums and obtain the follow display:
\begin{align*}
\eqref{lump-sum}\geq&
\prod_{k=1}^K\inf_{\pi_1^{(k)},\cdots,\pi_r^{(k)}}\sum_{\mathbf{s}^{(k)}} P_{\pi_1^{(k)}}\l(s^{(k)},s_{t+1}^{(k)}\r)\cdots P_{\pi_r^{(k)}}\l(s_{t+r-1}^{(k)},\tilde{s}^{(k)}\r)\\
=&\prod_{k=1}^K\inf_{\pi_1^{(k)},\cdots,\pi_r^{(k)}}P_{\pi_1^{(k)},\cdots,\pi_r^{(k)}}\l(s^{(k)}, \tilde{s}^{(k)}\r),
\end{align*}
where $P_{\pi_1^{(k)},\cdots,\pi_r^{(k)}}\l(s^{(k)}, \tilde{s}^{(k)}\r)$ denotes the $\l(s^{(k)}, \tilde{s}^{(k)}\r)$-th entry of the product matrix 
$\mathbf{P}_{\pi_1^{(k)}}^{(k)}\cdots \mathbf{P}_{\pi_r^{(k)}}^{(k)}$.
Now, by Assumption \ref{assumption-1}, there exists a large enough integer $\widehat{r}$ such that $\mathbf{P}_{\pi_1^{(k)}}^{(k)}\cdots \mathbf{P}_{\pi_r^{(k)}}^{(k)}$ is a strictly positive matrix for any sequence of $r\geq \widehat{r}$ randomized stationary policy. As a consequence, the above probability is strictly positive and \eqref{lump-sum} is also strictly positive.

This implies, if we choose $\tilde{\mathbf{s}} = \mathbf{s}$, then, starting from any arbitrary product state 
$\mathbf{s}\in\mathcal{S}^{(1)}\times\cdots\times\mathcal{S}^{(K)}$, there is a positive probability of returning to this state after $r$ steps for all $r\geq\widehat{r}$, which gives the aperiodicity. Similarly, there is a positive probability of reaching any other composite state after $r$ steps for all $r\geq\widehat{r}$, which gives the irreducibility. This implies the product state MDP is irreducible and aperiodic under any joint pure policy, and thus, any joint randomized stationary policy.

For the second part of the claim, we consider any randomized stationary policy $\Pi$ and the corresponding joint transition probability matrix $\mathbf{P}_\Pi$, there exists a stationary state-action probability vector $\Phi(\mathbf{a},\mathbf{s}),~\mathbf{a}\in\mathcal{A}^{(1)}\times\cdots\times\mathcal{A}^{(K)},~\mathbf{s}\in\mathcal{S}^{(1)}\times\cdots\times\mathcal{S}^{(K)}$, such that
\begin{equation}\label{inter-1}
\sum_{\mathbf{a}}\Phi(\mathbf{a},\tilde{\mathbf{s}}) = \sum_{\mathbf{s}}\sum_{\mathbf{a}}\Phi(\mathbf{a},\mathbf{s})P_\mathbf{a}(\mathbf{s},\tilde{\mathbf{s}}),
~\forall \tilde{\mathbf{s}}\in \mathcal{S}^{(1)}\times\cdots\times\mathcal{S}^{(K)}.
\end{equation}
Then, the state-action probability of the k-th MDP is $\theta^{(k)}(a^{(k)},\tilde{s}^{(k)}) =\sum_{\tilde{s}^{(j)},a^{(j)},~j\neq k}\Phi(\mathbf{a},\tilde{\mathbf{s}})$. Thus,
\begin{align*}
\sum_{a^{(k)}}\theta^{(k)}(a^{(k)},\tilde{s}^{(k)})
=&\sum_{\tilde{s}^{(j)},~j\neq k}\sum_{\mathbf{a}}\Phi(\mathbf{a},\tilde{\mathbf{s}})
= \sum_{\mathbf{s}}\sum_{\mathbf{a}}\Phi(\mathbf{a},\mathbf{s})\sum_{\tilde{s}^{(j)},~j\neq k}P_\mathbf{a}(\mathbf{s},\tilde{\mathbf{s}})\\
=& \sum_{\mathbf{s}}\sum_{\mathbf{a}}\Phi(\mathbf{a},\mathbf{s})\cdot Pr\l(\tilde{s}^{(k)}|\mathbf{a},\mathbf{s}\r)
= \sum_{\mathbf{s}}\sum_{\mathbf{a}}\Phi(\mathbf{a},\mathbf{s})\cdot Pr\l(\tilde{s}^{(k)}| a^{(k)}, s^{(k)}\r)\\
=&\sum_{a^{(k)}}\sum_{s^{(k)}}\theta^{(k)}(a^{(k)},\tilde{s}^{(k)})\cdot Pr\l(\tilde{s}^{(k)}| a^{(k)}, s^{(k)}\r)\\
=&\sum_{a^{(k)}}\sum_{s^{(k)}}\theta^{(k)}(a^{(k)},\tilde{s}^{(k)})\cdot P_{a^{(k)}}\l( s^{(k)},\tilde{s}^{(k)}\r)
\end{align*}
where the second from the last inequality follows from Assumption \ref{assumption:indep-trans}. This finishes the proof.
\end{proof}

\subsection{Missing proofs in Section \ref{sec:stat-analysis}}
\begin{proof}[Proof of Lemma \ref{lemma:queue-drift-bound}]
Consider the state-action probabilities $\{\tilde\theta^{(k)}\}_{k=1}^K$ which achieves the Slater's condition in \eqref{slater-2}. First of all, note that $Q_i(t)\in\mathcal{F}_{t-1},~\forall t\geq1$. 
Then, using the Assumption \ref{assumption:oblivious} that $\{\mathbf{g}_{i,t-1}^{(k)}\}_{k=1}^K$ is independent of all system information up to $t-1$, we have 
\begin{equation}\label{neg-drift}
\expect{Q_i(t-1)\sum_{k=1}^K\l\langle \mathbf{g}_{i,t-1}^{(k)}, \tilde\theta \r\rangle \Big|~\mathcal{F}_{t-1}}
=\expect{\sum_{k=1}^K\l\langle \mathbf{g}_{i,t-1}^{(k)}, \tilde\theta \r\rangle}Q_i(t-1)\leq -\eta Q_i(t-1).
\end{equation}
Now, by the drift-plus-penalty bound \eqref{dpp-bound}, with $\theta^{(k)}=\tilde\theta^{(k)}$,
\begin{align*}
\Delta(t)\leq& - V\sum_{k=1}^{K}\l\langle \mathbf{f}_{t-1}^{(k)}, \theta_t^{(k)} - \theta_{t-1}^{(k)} \r\rangle - \alpha\sum_{k=1}^K\|\theta^{(k)}_t-\theta^{(k)}_{t-1}\|_2^2+ \frac32mK^2\Psi^2
+ V\sum_{k=1}^K\l\langle \mathbf{f}_{t-1}^{(k)}, \tilde\theta^{(k)} - \theta_{t-1}^{(k)} \r\rangle \\
 &+ \sum_{i=1}^mQ_i(t-1)\sum_{k=1}^K
\l\langle\mathbf{g}_{i,t-1}^{(k)}, \tilde\theta^{(k)}\r\rangle + \alpha\sum_{k=1}^K\|\tilde\theta^{(k)} - \theta_{t-1}^{(k)}\|_2^2
-\alpha\sum_{k=1}^K\|  \tilde\theta^{(k)}-\theta_t^{(k)}\|_2^2\\
\leq& 4VK\Psi  +  \frac32mK^2\Psi^2  +  \sum_{i=1}^mQ_i(t-1)\sum_{k=1}^K
\l\langle\mathbf{g}_{i,t-1}^{(k)}, \tilde\theta^{(k)}\r\rangle + \alpha\sum_{k=1}^K\|\tilde\theta^{(k)} - \theta_{t-1}^{(k)}\|_2^2
-\alpha\sum_{k=1}^K\|  \tilde\theta^{(k)}-\theta_t^{(k)}\|_2^2
\end{align*}
where the second inequality follows from Holder's inequality that 
$$\l|\l\langle \mathbf{f}_{t-1}^{(k)}, \theta_t^{(k)} - \theta_{t-1}^{(k)} \r\rangle\r|\leq \|\mathbf{f}_{t-1}^{(k)}\|_{\infty}\l\| \theta_t^{(k)} - \theta_{t-1}^{(k)}\r\|_1\leq 2\Psi.$$
Summing up the drift from $t$ to $t+t_0-1$ and taking a conditional expectation $\expect{\cdot|\mathcal{F}_{t-1}}$ give
\begin{multline*}
\expect{\|\mathbf{Q}(t+t_0)\|_2^2-\|\mathbf{Q}(t)\|_2^2 \Big| \mathcal{F}_{t-1}}
\leq 8VK\Psi  +  3mK^2\Psi^2 + 2\sum_{i=1}^m\expect{\sum_{\tau=t}^{t+t_0-1}Q_i(\tau-1)\sum_{k=1}^K
\l\langle\mathbf{g}_{i,\tau-1}^{(k)}, \tilde\theta^{(k)}\r\rangle \Big| \mathcal{F}_{t-1}}\\
+2\alpha\expect{\sum_{k=1}^K\l(\|\tilde\theta^{(k)} - \theta_{t-1}^{(k)}\|_2^2  -  \|  \tilde\theta^{(k)}-\theta_{t+t_0}^{(k)}\|_2^2\r) \Big|\mathcal{F}_{t-1}}\\
\leq 8VK\Psi  +  3mK^2\Psi^2 + 4K\alpha + 2\sum_{i=1}^m\expect{\sum_{\tau=t}^{t+t_0-1}Q_i(\tau-1)\sum_{k=1}^K
\l\langle\mathbf{g}_{i,\tau-1}^{(k)}, \tilde\theta^{(k)}\r\rangle \Big| \mathcal{F}_{t-1}}.
\end{multline*}
Using the tower property of conditional expectations (further taking conditional expectations 
$\expect{\cdot\Big|\mathcal{F}_{t+t_0-1}\cdots\Big|\mathcal{F}_t}$ inside the conditional expectation) and the bound \eqref{neg-drift}, we have 
\begin{multline*}
\expect{\sum_{\tau=t}^{t+t_0-1}Q_i(\tau-1)\sum_{k=1}^K
\l\langle\mathbf{g}_{i,\tau-1}^{(k)}, \tilde\theta^{(k)}\r\rangle \Big| \mathcal{F}_{t-1}}\leq-\eta\expect{\sum_{\tau=t}^{t+t_0-1}Q_i(\tau-1)\Big| \mathcal{F}_{t-1}}\\
\leq -\eta t_0 Q_i(t-1) + \frac{t_0(t_0-1)}{2}\Psi\leq -\eta t_0 Q_i(t) + \frac{t_0(t_0-1)}{2}\Psi + \eta t_0K\Psi,
\end{multline*}
where the last inequality follows from the queue updating rule \eqref{Q-update} that 
$$|Q_i(t-1)-Q_i(t)|\leq \l| \sum_{k=1}^K\l\langle \mathbf{g}_{i,t-2}^{(k)},\theta_{t-1}^{(k)} \r\rangle \r|
 \leq K\|\mathbf{g}_{i,t-2}^{(k)}\|_\infty \|\theta_{t-1}^{(k)}\|_1 \leq K\Psi.$$
Thus, we have 
\begin{multline*}
\expect{\|\mathbf{Q}(t+t_0)\|_2^2-\|\mathbf{Q}(t)\|_2^2 \Big| \mathcal{F}_{t-1}} 
\leq  8VK\Psi  +  3mK^2\Psi^2 + 4K\alpha + t_0(t_0-1)m\Psi \\
 + 2mK\Psi\eta t_0 - 2 \eta t_0 \sum_{i=1}^mQ_i(t)\\
 \leq8VK\Psi  +  3mK^2\Psi^2 + 4K\alpha + t_0(t_0-1)m\Psi 
 + 2mK\Psi\eta t_0 - 2 \eta t_0 \|Q_i(t)\|_2.
\end{multline*}
Suppose $\|Q_i(t)\|_2\geq\frac{8VK\Psi  +  3mK^2\Psi^2 + 4K\alpha + t_0(t_0-1)m\Psi 
 + 2mK\Psi\eta t_0 + \eta^2 t_0^2}{\eta t_0}$, then, it follows,
 \[
 \expect{\|\mathbf{Q}(t+t_0)\|_2^2-\|\mathbf{Q}(t)\|_2^2 \Big| \mathcal{F}_{t-1}} \leq -  \eta t_0 \|Q_i(t)\|_2  ,
 \]
which implies 
\[
 \expect{\|\mathbf{Q}(t+t_0)\|_2^2 \Big| \mathcal{F}_{t-1}} \leq \l(\|Q_i(t)\|_2-\frac{\eta t_0}{2}\r)^2
\]
Since $\|Q_i(t)\|_2\geq \frac{\eta t_0}{2}$, taking square root from both sides using Jensen' inequality gives
\[
 \expect{\|\mathbf{Q}(t+t_0)\|_2 \Big| \mathcal{F}_{t-1}}\leq \|Q_i(t)\|_2 - \frac{\eta t_0}{2}.
\]
On the other hand, we always have
\begin{multline*}
\Big|\|\mathbf{Q}(t+1)\|_2-\|\mathbf{Q}(t)\|_2\Big|=\l|\sqrt{\sum_{i=1}^m\max\l\{Q_i(t)+\sum_{k=1}^{K}\l\langle\mathbf{g}_{i,t-1}^{(k)},\theta_{t}^{(k)} \r\rangle,0\r\}^2}
- \sqrt{\sum_{i=1}^mQ_i(t)^2}\r|\\
\leq \l(\sum_{i=1}^m\l(\sum_{k=1}^{K}\l\langle\mathbf{g}_{i,t-1}^{(k)},\theta_{t}^{(k)} \r\rangle\r)^2\r)^{1/2}\leq\sqrt{m}K\Psi.
\end{multline*}
Overall, we finish the proof.
\end{proof}

\subsection{Missing proofs in Section \ref{sec:perturb}}
\begin{proof}[Proof of Lemma \ref{lemma:small-bound}]
Consider any joint randomized stationary policy $\Pi$ and a starting state probability $d_0$ on the product state space $\mathcal{S}^{(1)}\times\mathcal{S}^{(2)}\times\cdots\times\mathcal{S}^{(K)}$. Let $\mathbf{P}_\Pi$ be the corresponding transition matrix on the product state space. Let $d_t$ be the state distribution at time $t$ under $\Pi$ and $d_\Pi$ be the stationary state distribution. 
By Lemma \ref{lemma:prod-chain}, we know that this product state MDP is irreducible and aperiodic (ergodic) under any randomized stationary policy. In particular, it is ergodic under any pure policy. Since there are only finitely many pure policies, let $\mathbf{P}_{\Pi_1},\cdots,\mathbf{P}_{\Pi_N}$ be probability transition matrices corresponding to these pure policies. By Proposition 1.7 of \cite{LevinPeresWilmer2006}
, for any $\Pi_i,~i\in\{1,2,\cdots,N\}$, there exists integer $\tau_i>0$ such that 
$\l(\mathbf{P}_{\Pi_i}\r)^t$ is strictly positive for any $t\geq\tau_i$. Let 
$$\tau_1=\max_i\tau_i,$$ 
then, it follows $\l(\mathbf{P}_{\Pi_i}\r)^{\tau_1}$ is strictly positive uniformly for all $\Pi_i$'s.
Let $\delta>0$ be the least entry of $\l(\mathbf{P}_{\Pi_i}\r)^{\tau_1}$ over all $\Pi_i$'s.
Following from the fact that the probability transition matrix $\mathbf{P}_{\Pi}$ is a convex combination of those of pure policies, i.e. $\mathbf{P}_{\Pi}=\sum_{i=1}^N\alpha_i \mathbf{P}_{\Pi_i},~\alpha_i\geq0,~\sum_{i=1}^N\alpha_i=1$, we have $\l(\mathbf{P}_{\Pi}\r)^{\tau_1}$ is also strictly positive. To see this, note that
\[
\l(\mathbf{P}_{\Pi}\r)^{\tau_1} = \l(\sum_{i=1}^N\alpha_i \mathbf{P}_{\Pi_i}\r)^{\tau_1}\geq \sum_{i=1}^N\alpha_i^{\tau_1} \l(\mathbf{P}_{\Pi_i}\r)^{\tau_1}>0,
\]
where the inequality is taken to be entry-wise. Furthermore, the least entry of $\l(\mathbf{P}_{\Pi}\r)^{\tau_1}$ is lower bounded by $\delta/N^{\tau_1-1}$ uniformly over all joint randomized stationary policies $\Pi$, which follows from the fact that the least entry of $\frac{1}{N}\l(\mathbf{P}_{\Pi}\r)^{\tau_1}$ is bounded as
\[
\frac{1}{N}\sum_{i=1}^N\alpha_i^{\tau_1}\delta \geq \l(\frac{1}{N}\sum_{i=1}^N\alpha_i\r)^{\tau_1}\delta= \frac{\delta}{N^{\tau_1}}.
\]
The rest is a standard bookkeeping argument following from the Markov chain mixing time theory (Theorem 4.9 of \cite{LevinPeresWilmer2006}). Let $\mathbf{D}_\Pi$ be a matrix of the same size as $\mathbf{P}_{\Pi}$ and each row equal to the stationary distribution $d_\Pi$. Let 
$\varepsilon = \delta/N^{\tau_1-1}$. We claim that for any 
integer $n>0$, and any $\Pi$,
\begin{equation}\label{small-fact-1}
\mathbf{P}_{\Pi}^{\tau_1n} = (1-(1-\varepsilon)^{n})\mathbf{D}_\Pi + (1-\varepsilon)^{n}\mathbf{Q}^n,
\end{equation}
for some stochastic matrix $\mathbf{Q}$. We use induction to prove this claim. First of all, for $n=1$, from the fact that $\l(\mathbf{P}_{\Pi}\r)^{\tau_1}$ is a positive matrix and the least entry is uniformly lower bounded by $\varepsilon$ over all policies $\Pi$, we can write $\l(\mathbf{P}_{\Pi}\r)^{\tau_1}$ as
\[
 \l(\mathbf{P}_{\Pi}\r)^{\tau_1} = \varepsilon\mathbf{D}_\Pi + (1-\varepsilon)\mathbf{Q},
\]
for some stochastic matrix $\mathbf{Q}$, where we use the fact that $\varepsilon\in(0,1]$. Suppose \eqref{small-fact-1} holds for $n=1,2,\cdots,\ell$, we show that it also holds for $n=\ell+1$. Using the fact that $\mathbf{D}_\Pi\mathbf{P}_\Pi = \mathbf{D}_\Pi$ and $\mathbf{Q}\mathbf{D}_\Pi = \mathbf{D}_\Pi$ for any stochastic matrix $\mathbf{Q}$,  we can write out $\mathbf{P}_{\Pi}^{\tau_1(\ell+1)}$:
\begin{align*}
\mathbf{P}_{\Pi}^{\tau_1(\ell+1)} =& \mathbf{P}_{\Pi}^{\tau_1\ell}\mathbf{P}_{\Pi}^{\tau_1} = \l( \l(1 - (1-\varepsilon)^\ell\r)\mathbf{D}_{\Pi} + (1-\varepsilon)^\ell Q^{\ell} \r)\mathbf{P}_{\Pi}^{\tau_1}\\
=&  \l(1 - (1-\varepsilon)^\ell\r)\mathbf{D}_{\Pi}\mathbf{P}_{\Pi}^{\tau_1} + (1-\varepsilon)^\ell \mathbf{Q}^{\ell}\mathbf{P}_{\Pi}^{\tau_1}\\
=&  \l(1 - (1-\varepsilon)^\ell\r)\mathbf{D}_{\Pi} + (1-\varepsilon)^\ell \mathbf{Q}^{\ell}(\varepsilon\mathbf{D}_\Pi + (1-\varepsilon)\mathbf{Q})\\
=&  \l(1 - (1-\varepsilon)^\ell\r)\mathbf{D}_{\Pi} + (1-\varepsilon)^\ell \mathbf{Q}^{\ell}((1-(1-\varepsilon))\mathbf{D}_\Pi + (1-\varepsilon)\mathbf{Q})\\
= & (1-(1-\varepsilon)^{\ell+1})\mathbf{D}_\Pi + (1-\varepsilon)^{\ell+1}\mathbf{Q}^{\ell+1}.
\end{align*}
Thus, \eqref{small-fact-1} holds. For any integer $t>0$, we write $t=\tau_1n+j$ for some integer $j\in[0,\tau_1)$ and $n\geq0$. Then,
\begin{equation*}
\l(\mathbf{P}_{\Pi}\r)^{t}-\mathbf{D}_\Pi  = \l(\mathbf{P}_{\Pi}\r)^{t}-\mathbf{D}_\Pi = (1-\varepsilon)^n\l(Q^n\mathbf{P}_{\Pi}^j-\mathbf{D}_\Pi\r).
\end{equation*} 
Let $\mathbf{P}_{\Pi}^{t}(i,\cdot)$ be the $i$-th row of $\mathbf{P}_{\Pi}^{t}$, then, we obtain
\[
\max_{i}\|  \mathbf{P}_{\Pi}^{t}(i,\cdot)  -  d_\Pi  \|_1\leq 2(1-\varepsilon)^n, 
\]
where we use the fact that the $\ell_1$-norm of the row difference is bounded by 2. Finally, for any starting state distribution $d_0$, we have
\begin{align*}
\l\|d_0\mathbf{P}_{\Pi}^t - d_{\Pi}  \r\|_1 =& \l\| \sum_{i} d_0(i)\l( \mathbf{P}_{\Pi}^t(i,\cdot) - d_\Pi \r) \r\|_1\\
=&  \sum_{i} d_0(i)\l\| \mathbf{P}_{\Pi}^t(i,\cdot) - d_\Pi \r\|_1\leq \max_{i}\|  \mathbf{P}_{\Pi}^{t}(i,\cdot)  -  d_\Pi  \|_1\leq 2(1-\varepsilon)^n.
\end{align*}
Take $r_1 = \log\frac{1}{1-\varepsilon}$
finishes the proof.
\end{proof}

\end{document}